\newcommand{\sS}{{\mathbf{S}}}  
\newcommand{\B}{{\mathfrak{B}}}  
\newcommand{\G}{{\mathcal{G}}}   
\newcommand{\I}{{\mathcal{I}}}   
\renewcommand{\L}{{\mathcal{L}}} 
\renewcommand{\O}{{\mathcal{O}}} 
\newcommand{\Q}{{\mathbf{Q}}}   
\newcommand{\CC}{{\mathbb{C}}}   
\newcommand{\GG}{{\mathbb{G}}}   
\newcommand{\ZZ}{{\mathbb{Z}}}   
\newcommand{\PP}{{\mathbb{P}}}   
\newcommand{\FF}{{\mathbb{F}}}
\newcommand{\E}{{\mathcal{E}}}
\newcommand{\Sec}{{\mathrm{Sec}}} 
\newcommand{\Bl}{{\mathrm{Bl}}}   
\newcommand{\reg}{{\mathrm{reg}}}  
\newcommand{\sing}{{\mathrm{sing}}} 
\newcommand{\Pic}{{\mathrm{Pic}}} 
\newcommand{\rk}{{\mathrm{rk}}}  
\newcommand{\Hilb}{{\mathrm{Hilb}}} 
\newcommand{\Graph}{{\mathrm{Graph}}} 
\newcommand{\Rmnum}[1]{\expandafter\@slowromancap\romannumeral #1@}
\spnewtheorem{case}{Case}[section]{\itshape}{\rmfamily} 
\spnewtheorem{subcase}{Subcase}[case]{\itshape}{\rmfamily}
\spnewtheorem{step}{Claim}[section]{\itshape}{\rmfamily}
\journalname{Noname}
\begin{document}

\title{On special quadratic birational transformations of a projective space into a hypersurface}
\titlerunning{On special quadratic birational transformations}

\author{Giovanni Staglian\`o} 
\authorrunning{G. Staglian\`o} 

\institute{Giovanni Staglian\`o \at
           Ph.D. in Mathematics (\Rmnum{23} cycle) -  University of Study of Catania \\
           \email{\href{mailto:gstagliano@dmi.unict.it}{gstagliano@dmi.unict.it}} 
}

\date{\today \ (The date of receipt and acceptance will be inserted by the editor)} 

\maketitle

\begin{abstract}
We study transformations as in the title 
with emphasis on those having smooth connected base locus, called ``special''.
In particular, 
we classify all special quadratic birational maps into a quadric hypersurface whose inverse 
is given by quadratic forms by showing 
that there are only four examples having general
 hyperplane sections of  Severi varieties as base loci.  
\keywords{birational transformation \and base locus \and quadric hypersurface \and entry locus \and tangential projection}
 \subclass{14E05 
      \and 14N05 
          }
\end{abstract}

\tableofcontents

\section*{Introduction}
\addcontentsline{toc}{section}{Introduction}
A birational transformation
 $\varphi:\PP^n\dashrightarrow\sS\subset\PP^{n+1}$
of a complex projective space
 into a nonlinear and sufficiently regular hypersurface  $\sS$,
is said to be of type $(d_1,d_2)$,
if $\varphi$ (resp. $\varphi^{-1}$) is defined by a sublinear system 
of $|\O_{\PP^n}(d_1)|$  (resp. $|\O_{\sS}(d_2)|$); $\varphi$ is said to be special if 
its base locus
 $\B=\mathrm{Bs}(\varphi)$ is  smooth and connected.
Consider  a  
 $\varphi$ as above, supposed to be special and  of type $(2,d)$.
 The blow-up
 $\pi:X=\Bl_{\B}(\PP^n)\rightarrow\PP^n$ of $\PP^n$ along the base locus 
 $\B$
resolves the indeterminacies of the transformation
 $\varphi$. 
Comparing the two ways in which it is possible to write
 the canonical class
$K_X$, with respect to $\pi$ and $\pi':=\varphi\circ\pi$, 
we get 
a formula expressing  the dimension of $\B$ as a function of $n$, $d$ and $\deg(\sS)$,
see  Proposition \ref{prop: dimension formula}.
The primary advantage in dealing with the case
$d_1=2$ is that $\B$ is 
a $QEL$-variety, i.e. 
the general entry locus  of $\B$ is a smooth quadric hypersurface. 
Therefore it is possible to apply the main results  of the theory of  
 $QEL$-varieties; in particular,
 the Divisibility Theorem \cite[Theorem~2.8]{russo-qel1}, 
together with the
formula on the dimension 
of $\B$, 
drastically reduces the set of quadruples 
 $(\dim(\B),n,d,\deg(\sS))$ for which  such a
 $\varphi$ exists.

The $\varphi$ of type $(2,1)$   
are  described in  Proposition \ref{prop: type 2-1} as
the only 
 transformations whose base locus is a quadric of codimension
 $2$; 
in particular, 
modulo projective transformations, 
there is only one example.
With regard to the
 $\varphi$'s of type $(2,2)$ into a quadric hypersurface $\Q$,
it is known that a special Cremona 
 transformation 
$\PP^{n+1}\dashrightarrow\PP^{n+1}$ of type $(2,2)$ 
has as base locus a Severi variety (see \cite{ein-shepherdbarron}) and moreover,
modulo projective transformations, 
there exist only four Severi varieties
 (see \cite{zak-tangent} and Table \ref{tab: severi varieties} below). 
Now, if we restrict these transformations 
to a general hyperplane $\PP^{n}\subset\PP^{n+1}$,
we clearly obtain special transformations  
$\varphi:\PP^n\dashrightarrow\Q\subset\PP^{n+1}$ of type $(2,2)$ 
(see Example \ref{example: d=2 Delta=2}).  
In Theorem \ref{prop: classification of type 2-2 into quadric} 
we  prove 
 that all examples of $\varphi$ of type $(2,2)$ into a quadric $\Q$ arise in this way;
in particular, their  base loci
are hyperplane sections of Severi varieties. 
Regarding special transformations of type $(2,2)$ into a cubic and quartic hypersurface
 we are able  to determine some  invariants of the base locus  as
the Hilbert polynomial and the Hilbert scheme of lines passing through a point
(Propositions \ref{prop: invariants d=2 Delta=3} and \ref{prop: invariants d=2 Delta=4}).

We point out that in \cite[\S 5]{russo-simis} (see also \cite{semple}), 
in a similar way to here,
special birational transformations 
$\PP^{2m-2}\dashrightarrow\PP^{{m+1 \choose 2}-1} $ of type $(2,1)$ and of type $(2,2)$
whose image is the Grassmannian $\GG(1,m)$ were classified.

Another approach to  the study  
of all $\varphi$ of type $(2,d)$ is their classification according to
 the dimension of the base locus. 
In Table
 \ref{tab: cases with dimension leq 3}  
(which is constructed 
via Propositions \ref{prop: 2-fold in P6} and \ref{prop: 3-fold in P8}), 
we provide a list of all possible base loci when the dimension is at most $3$,
although in one case we do not know if it really exists.
As a consequence, in Corollary \ref{prop: classification type 2-3 into cubic}, we obtain
that a special transformation  
 $\varphi$ of type $(2,3)$ into a cubic hypersurface
 $\sS$ has as base locus  
the blow-up $\Bl_{\{p_1,\ldots,p_5\}}(Q)$ of $5$ points $p_1,\ldots,p_5$ 
in a smooth quadric $Q\subset\PP^4$, embedded in   
$\PP^8$ by the linear system $|2 H_{\PP^4}|_{Q}-p_1-\cdots-p_5|$ 
(see also Example \ref{example: d=3 Delta=3}).

In the  last section we treat  birational 
transformations  $\varphi:\PP^n\dashrightarrow\Q\subset\PP^{n+1}$ 
 of type $(2,d)$ into a quadric, 
having  reduced base locus  and with $n\leq 4$. 
For $n=3$ and $d\geq 2$ we prove that $d=2$ and there exists
only one example  
which is not  special  (Proposition \ref{prop: P3});
for $n=4$ and $d\geq 2$ we get $d=2$ and all the examples are degenerations of  the unique special case (Proposition \ref{prop: P4}).

\section{Reminders and notation}\label{sec: preliminaries}
Let $X\subset\PP^n$ be a complex projective irreducible algebraic variety
of dimension $r$ and
embedded in the projective space $\PP^n$.
We shall assume $X\subset\PP^n$ nondegenerate,
i.e. not contained in any hyperplane.
\paragraph{Secant variety}   
Denote by $S_{X}$ the closure of $S^0_{X}$ in $X\times X\times\PP^n$, where
$
S^0_{X}:=\{(x,x',p) \in X\times X\times\PP^n: x\neq x' \mbox{ and } p\in\langle x,x'\rangle \}
$,
and let $\pi_{\PP^n}:S_X\rightarrow\PP^n $, $\pi_{X,X}:S_X\rightarrow X\times X$, $p_{X}:X\times X\rightarrow X$ be the projections.
The \emph{secant variety} of $X$, $\Sec(X)$, is defined
as
the scheme-theoretic  image of $\pi_{\PP^n}$, i.e. as
$$\Sec(X):=\overline{\pi_{\PP^n}(S_{X})}=\overline{\bigcup_{(x,x')\in X\times X \atop  x\neq x'} \langle x,x'\rangle}.$$
Note that $S_{X}$ is an irreducible variety of
dimension $2\dim(X)+1$, hence $\Sec(X)$ is also irreducible
of dimension
$\dim(\Sec(X))\leq 2\dim(X)+1$.
A line $l$ is said to be a \emph{secant} of $X$ if the length of the scheme $X\cap l$ is at least $2$;
in these terms, $\Sec(X)$ is the union of all secant lines of $X$.
The \emph{secant defect} of $X$ is defined as the nonnegative integer
$$
\delta(X):=2\dim(X)+1-\dim(\Sec(X)) 
$$
and $X$ is called \emph{secant defective} if $\delta(X)>0$. 
An essential tool for studying secant defective varieties 
is the so-called Terracini Lemma
(see \cite[Theorem~1.3.1]{russo-specialvarieties}), which states that
for two general points $x, x'\in X$ ($x\neq x'$) and for a general point $p\in\langle x,x'\rangle $,
we have
$$
T_p(\Sec(X))=\langle T_x(X), T_{x'}(X)\rangle.
$$
\paragraph{Contact loci}   For
$p\in\Sec(X)\setminus X$,
put
\begin{eqnarray*}
L_p(X)&:=&\pi_{\PP^n}(\pi_{X,X}^{-1}(\pi_{X,X}(\pi_{\PP^n}^{-1}(p)))) 
        = \bigcup\{l: l \mbox{ secant line of } X \mbox{ through }p\},\\
\Sigma_p(X)&:=& \pi_X(\pi_{\PP^n}^{-1}(p))=L_p(X)\cap X  
             =  \overline{\{x\in X: \exists x'\in X \mbox{ with } x\neq x' \mbox{ and } p\in \langle x,x' \rangle \}},
\end{eqnarray*}
where $\pi_X=p_X\circ\pi_{X,X}$.
$\Sigma_p(X)$ is called the \emph{entry locus}
of $X$ with respect to $p$.
Note that
 $L_p(X)$ is a cone over $\Sigma_p(X)$ of vertex $p$, hence
 $\dim(L_p(X))=\dim(\Sigma_p(X))+1$. 
If $p\in\Sec(X)\setminus X$ is a general point,  then
$\Sigma_p(X)$ is reduced 
and equidimensional of
dimension  $\delta(X)$.
The \emph{tangential contact locus} of
 $X$ with respect to the general point  $p$ 
(see also \cite[Definition~2.3.3]{russo-specialvarieties} 
and \cite[Definition~3.4]{chiantini-ciliberto}) is defined
as
$$
\Gamma_{p}(X):=\overline{\{x\in \reg(X): T_x(X)\subseteq T_p(\Sec(X)) \} }.
$$
By  Terracini Lemma 
it follows  that
$\Sigma_{p}(X)\subseteq \Gamma_p(X)$ and
it is possible to prove that
 for 
 $x,x'\in X$ general points and $p\in\langle x,x' \rangle$ general point, 
the irreducible components of
$\Gamma_p(X)$ through $x$ and $x'$ are uniquely determined and have
 the same
dimension, independent of  $p$. Denoting this dimension with 
 $\gamma(X)$, we have
$\delta(X)\leq\gamma(X)$.
\paragraph{Second fundamental form}
For a general point 
 $x\in X$  consider the projection of  
 $X$ from the tangent space  $T_x(X)\simeq \PP^r$ onto a linear space   
$\PP^{n-r-1}$ skew to $T_x(X)$,
$$
\tau_{x,X}:X\subset\PP^n\dashrightarrow \overline{\tau_{x,X}(X)}=W_{x,X}\subseteq\PP^{n-r-1}.
$$
By Terracini Lemma it follows that
$W_{x,X}\subset\PP^{n-r-1}$ is an irreducible nondegenerate variety 
 of dimension $r-\delta(X)$ (see \cite[Proposition~1.3.8]{russo-specialvarieties}).
Now, consider the blow-up
 of $X$ at the general point  $x$,
$
\pi_x:\Bl_x(X)\rightarrow X
$,
and denote by 
 $E=E_X=E_{x,X}=\PP^{r-1}$ its exceptional divisor and by 
 $H$ a divisor of the linear system $|\pi_{x}^{\ast}(\O_X(1))|$.
Since $X$  is not linear, it is defined the rational map
$$
\phi: E{\dashrightarrow} W_{x,X}\subset\PP^{n-r-1},
$$
and the linear system associated to 
 $\phi$, i.e.
 $|H-2E|_{|E}\subseteq |-2E|_{|E}=|\O_{\PP^{r-1}}(2)|$, is called 
\emph{second fundamental form}  of $X$ (at the point $x$) and it is denoted by
 $|II_{x,X}|$.
Of course, $\dim(|II_{x,X}|)\leq n-r-1$ and 
the image of $\phi=\phi_{|II_{x,X}|}$ is contained in $W_{x,X}$.
If
 $X$ is smooth,
secant defective  
and $\Sec(X)\subsetneq\PP^n$, then by \cite[Proposition~2.3.2]{russo-specialvarieties}
$\overline{\phi_{|II_{x,X}|}(E)}=W_{x,X}$ and
 in particular $\dim(|II_{x,X}|)=n-r-1$.
If $x\in X$, we denote by $\L_{x,X}$ the Hilbert scheme of lines contained 
in $X$ and passing through the point 
$x$. If $x\in\reg(X)$, there exists a natural 
closed embedding of $\L_{x,X}$ into the exceptional divisor 
 $E_{x,X}$ (see \cite{russo-linesonvarieties}).
Moreover, 
if $B_{x,X}=\mathrm{Bs}(|II_{x,X}|)$ is the base locus of 
   the second fundamental form at a point $x\in\reg(X)$, then there exists 
a natural closed embedding of
 $\L_{x,X}$ into $B_{x,X}$, which is an isomorphism if 
    $X$ is defined by 
quadratic forms (see \cite[Corollary~1.6]{russo-linesonvarieties}).
\paragraph{Gauss map} Recall that the Gauss map of $X$
 is the rational map 
$\G_{X} :X\dashrightarrow \GG(r,n)$, which sends the point 
 $x\in\reg(X)$ to the tangent space $T_x(X)\in \GG(r,n)$.
For a general point 
 $x\in\reg(X)$, the closure of the fiber 
$\overline{\G_X^{-1}(\G_X(x))}=\overline{\{y\in \reg(X): T_y(X)=T_x(X)\}}$
is a linear space and when 
 $X$ is smooth, 
$\G_X:X\dashrightarrow \G_X(X)$ is birational (see \cite{zak-tangent}).
\paragraph{$(L)QEL$-varieties}
The variety $X\subset\PP^n$ is called \emph{$QEL$-variety} (\emph{quadratic entry locus variety}) of type $\delta=\delta(X)$ if 
the general entry locus $\Sigma_p(X)$ is a quadric hypersurface (of dimension $\delta$);
$X$ is called \emph{$LQEL$-variety} (\emph{locally quadratic entry locus variety}) if every 
irreducible component of the general entry locus 
  $\Sigma_p(X)$ is a quadric hypersurface; 
$X$ is called \emph{$CC$-variety} (\emph{conic-connected variety})
if for two general points  $x,y\in X$ there exists an irreducible
conic  $C_{x,y}$ contained in $X$ and passing through $x,y$
(see also \cite{russo-qel1}, \cite{ionescu-russo-conicconnected}, \cite{zak-tangent}).
Of course a $QEL$-variety is a $LQEL$-variety and a $LQEL$-variety of type $\delta>0$ is
a $CC$-variety, but the converse implications are not true in general. 

For $X\subset\PP^n$, with $\Sec(X)\subsetneq\PP^n$ and
$\delta(X)>0$, define the nonnegative integer
 $\widetilde{\gamma}(X)$ as the dimension of the general fiber
of the Gauss map
$G_{W_{x,X}}:W_{x,X}\dashrightarrow\GG(r-\delta(X), n-r-1)$, 
where $x\in X$ is a general point;
by \cite[Lemma~2.3.4]{russo-specialvarieties} we get 
$\widetilde{\gamma}(X)=\gamma(X)-\delta(X)$.
A sufficient condition for $X$ to be a $LQEL$-variety is that
 $\widetilde{\gamma}(X)=0$, 
this follows from the so-called Scorza Lemma \cite[Theorem~2.3.5]{russo-specialvarieties}.

Smooth $LQEL$-varieties  of type $\delta\geq3$ 
are 
 subject to a strong numerical constraint  
given by the Divisibility Theorem \cite[Theorem~2.8]{russo-qel1},
which states that,  under these hypotheses, we have
$$
\dim(X)\equiv \delta(X)\ \mathrm{mod}\ 2^{\lfloor (\delta(X)-1)/2 \rfloor}.
$$ 
\paragraph{Severi varieties}
If the variety
 $X\subset\PP^n$ is smooth and $\Sec(X)\subsetneq\PP^n$,
by \cite[\Rmnum{2} Corollary~2.11]{zak-tangent} it follows
$n \geq (3r+4)/2$ and
$X$ is called \emph{Severi variety}  if $n = (3r+4)/2$.
Severi varieties are classified, 
modulo projective transformations,
in  Table
 \ref{tab: severi varieties} 
(for the proof see \cite{lazarsfeld-vandeven} and \cite{zak-tangent}).
\begin{table}[htbp]
\begin{center}
\begin{tabular}{|c|c|c|c|}
\hline
$\dim(X)$& $h^0(\I_{X}(2))$ & $X$ & $\L_{x,X}$\\
\hline
 \hline
2& 6 & Veronese surface   $\nu_2(\PP^2)\subset\PP^5$ & $\emptyset$\\
 \hline
4& 9 & Segre variety  $\PP^2\times\PP^2\subset\PP^8$ & $\PP^1\sqcup\PP^1\subset \PP^3$\\
 \hline
8& 15 & Grassmannian $\GG(1,5)\subset\PP^{14}$ & $\PP^1\times\PP^3\subset\PP^7$\\
 \hline
16& 27 & Cartan variety $E_6\subset\PP^{26}$ & $S^{10}\subset\PP^{15}$\\
\hline
\end{tabular}
\end{center}
\caption{Severi varieties.}
\label{tab: severi varieties}
\end{table}
Severi varieties 
are also characterized in 
 \cite{ein-shepherdbarron} as the only
smooth and connected 
base loci of 
quadro-quadric Cremona transformations 
(i.e. birational transformations of $\PP^n$ defined by quadratic forms
and having inverse of the same kind).

\section{Transformations of type \texorpdfstring{$(2,1)$}{(2,1)}}\label{sec: type 2-1}
\begin{definition}
A birational transformation 
 $\varphi:\PP^n\dashrightarrow\overline{\varphi(\PP^n)} =\sS\subset\PP^{n+1}$ 
is said to be of type  $(2,d)$ if it is quadratic 
(i.e. defined by a linear system of quadrics without fixed component) 
and $\varphi^{-1}$ can be defined by a linear system contained in $|\O_{\sS}(d)|$,
with $d$ minimal with this property;
$\varphi$ is said to be \emph{special} if its base locus  is smooth and connected.
\end{definition}
\begin{lemma}\label{prop: cohomology I2B} 
Let $\varphi:\PP^n\dashrightarrow\sS\subset\PP^{n+1}$ be 
a quadratic birational transformation, 
with base locus $\B$ and image a normal nonlinear hypersurface $\sS$. 
Then  $h^0(\PP^{n}, \I_{\B}(2))=n+2$. 
\end{lemma}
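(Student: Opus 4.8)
The plan is to identify $H^0(\PP^n,\I_{\B}(2))$ with the space of hyperplane sections of $\sS$, which has the desired dimension. Write $\varphi=(F_0:\cdots:F_{n+1})$ with $F_i\in H^0(\PP^n,\I_{\B}(2))$ the defining quadrics, so that $F_i=\varphi^{\ast}(z_i)$ for coordinates $z_0,\dots,z_{n+1}$ on $\PP^{n+1}$. First I would record the lower bound: the $F_i$ are linearly independent, since a relation $\sum a_i F_i=0$ would force $\sS=\overline{\varphi(\PP^n)}$ into the hyperplane $\{\sum a_i z_i=0\}$; as $\sS$ is an irreducible (being normal) hypersurface of the same dimension as that hyperplane, this would make $\sS$ linear, against the hypothesis. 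Hence $h^0(\I_{\B}(2))\geq n+2$.

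Next I would compute the target dimension. Since $\sS\subset\PP^{n+1}$ is a hypersurface of degree $s:=\deg(\sS)\geq 2$, the exact sequence
$$0\to\O_{\PP^{n+1}}(1-s)\to\O_{\PP^{n+1}}(1)\to\O_{\sS}(1)\to 0$$
together with $H^0(\O_{\PP^{n+1}}(1-s))=H^1(\O_{\PP^{n+1}}(1-s))=0$ yields $h^0(\sS,\O_{\sS}(1))=n+2$. The crux is then to produce an injection $H^0(\PP^n,\I_{\B}(2))\hookrightarrow H^0(\sS,\O_{\sS}(1))$, which forces $h^0(\I_{\B}(2))\leq n+2$ and hence equality.

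To build this injection I would transport quadrics back along $\varphi^{-1}$ and extend by normality. On $U:=\PP^n\setminus\B$ the map $\varphi$ is a morphism with $\varphi^{\ast}\O_{\sS}(1)=\O_U(2)$, so any $Q\in H^0(\I_{\B}(2))$ restricts to a section of $\varphi^{\ast}\O_{\sS}(1)$ over $U$. Choosing an open $V\subseteq\sS$ over which $\varphi^{-1}$ is an isomorphism onto its image $U'\subseteq U$, the pullback $(\varphi^{-1})^{\ast}(Q|_{U'})$ is a section of $\O_{\sS}(1)$ over $V$; since $\sS$ is normal and $\O_{\sS}(1)$ is a line bundle, it extends uniquely to $H^0(\sS,\O_{\sS}(1))$ provided $\sS\setminus V$ has codimension $\geq 2$. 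Assigning to $Q$ this extension is linear and injective, since a section dying on the dense $U'$ is zero, and this gives the upper bound.

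The main obstacle is precisely the codimension-two claim, and this is where normality of $\sS$ is indispensable. Passing to a normal model $q:\Gamma\to\sS$ of $\varphi^{-1}$, a proper birational morphism onto the normal variety $\sS$, Zariski's Main Theorem shows $q$ is an isomorphism away from the image of its positive-dimensional fibres; a genuinely contracted subvariety necessarily maps to a locus of codimension $\geq 2$, so the non-isomorphism locus of $\varphi^{-1}$ on $\sS$ has codimension $\geq 2$ and $V$ may be taken as its complement. Here one also uses $\mathrm{codim}_{\PP^n}(\B)\geq 2$, which is automatic since $\varphi$ is quadratic without fixed component, so that $U$ is big. With the codimension bound established, the Hartogs-type extension across $\sS\setminus V$ follows immediately from normality, completing the argument.
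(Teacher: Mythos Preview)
Your overall strategy---bounding $h^0(\I_{\B}(2))$ below by the linear independence of the $F_i$ and above by $h^0(\sS,\O_{\sS}(1))=n+2$---is the right one, and the lower bound is fine. The gap is in the Hartogs step meant to produce the injection $H^0(\I_{\B}(2))\hookrightarrow H^0(\sS,\O_{\sS}(1))$. Zariski's Main Theorem only yields an open $V_0\subseteq\sS$ with codimension $\geq 2$ complement on which $\varphi^{-1}$ is a \emph{morphism}; it does not give one on which $\varphi^{-1}$ is an isomorphism onto an open subset of $U=\PP^n\setminus\B$. In fact the exceptional locus over $\B$ in any resolution typically maps onto a \emph{divisor} in $\sS$: in the transformations studied later in the paper the image $\pi'(E)$ of the exceptional divisor is a hypersurface section of $\sS$, and already for the standard Cremona involution of $\PP^2$ the three coordinate lines on the target are collapsed by $\varphi^{-1}$ onto the three base points. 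Hence any $V$ with $\varphi^{-1}(V)\subseteq U$ must avoid a divisor, and the extension across $\sS\setminus V$ cannot be obtained from normality alone.

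What rescues the argument is precisely the hypothesis you have not yet exploited at that point: $Q$ vanishes on $\B$. The paper makes this effective by passing to the blow-up $\pi:X=\Bl_{\B}(\PP^n)\to\PP^n$ with exceptional divisor $E$, so that $\pi'=\varphi\circ\pi$ is a morphism. Any $Q\in H^0(\I_{\B}(2))$ pulls back to a section of $\pi^{\ast}\O_{\PP^n}(2)$ vanishing along $E$, hence a section of $\pi^{\ast}\O_{\PP^n}(2)\otimes\O_X(-E)\cong{\pi'}^{\ast}\O_{\sS}(1)$. Since $\sS$ is normal, ${\pi'}_{\ast}\O_X=\O_{\sS}$ by Zariski's Main Theorem, and the projection formula identifies $H^0(X,{\pi'}^{\ast}\O_{\sS}(1))$ with $H^0(\sS,\O_{\sS}(1))$. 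This furnishes the desired injection without any codimension-two claim on the target; the vanishing of $Q$ along $\B$ is exactly what cancels the exceptional divisor.
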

\begin{proof} 
 Resolve the indeterminacies of $\varphi$ with the diagram 
\begin{equation}\label{eq: diagram resolving map} 
\xymatrix{ & X \ar[dl]_{\pi} \ar[dr]^{\pi'}\\ \PP^n\ar@{-->}[rr]^{\varphi}& & \sS } 
\end{equation}
where $\pi:X=\Bl_{\B}(\PP^n)\rightarrow\PP^n$ is the blow-up of
 $\PP^n$ along $\B$,   $E$ the exceptional divisor,
 $\pi'=\varphi\circ\pi$.     
By  Zariski's Main Theorem (\cite[\Rmnum{3} Corollary~11.4]{hartshorne-ag} or \cite[\Rmnum{3} \S  9]{mumford})
we have ${\pi'}_{\ast}(\O_{X})=\O_{\sS}$ and by  projection formula 
 \cite[\Rmnum{2} Exercise~5.1]{hartshorne-ag} it follows
${\pi'}_{\ast}({\pi'}^{\ast}(\O_{\sS}(1)))=\O_{\sS}(1)$.
Now, putting $V\subseteq H^0(\PP^n,\O_{\PP^n}(2))$ 
the linear vector space associated to the linear system $\sigma$ defining $\varphi$,
 we have the natural inclusions
\begin{eqnarray*}
 V & \hookrightarrow & H^0(\PP^n,\I_{\B}(2)) 
 \hookrightarrow   H^0(X,\pi^{\ast}(\O_{\PP^n}(2))\otimes\pi^{-1}\I_{\B}\cdot \O_{X}) \\
&\stackrel{\simeq}{\rightarrow} & H^0(X,\pi^{\ast}(\O_{\PP^n}(2))\otimes\O_{X}(-E))
 \stackrel{\simeq}{\rightarrow}  H^0(X,{\pi'}^{\ast}(\O_{\sS}(1))) \\
 & \stackrel{\simeq}{\rightarrow} & H^0(\sS,\O_{\sS}(1)) \stackrel{\simeq}{\rightarrow}  H^0(\PP^{n+1},\O_{\PP^{n+1}}(1)).
\end{eqnarray*}
All these inclusions are isomorphisms, since $\dim(V)=n+2=h^0(\PP^{n+1},\O_{\PP^{n+1}}(1))$.
\end{proof}

\begin{proposition}\label{prop: type 2-1} 
Let $\varphi,\B,\sS$ be as in Lemma \ref{prop: cohomology I2B}.
The following conditions are equivalent:
\begin{enumerate}
 \item\label{item: 1, 2-1} $h^0(\PP^n,\I_{\B}(1))\neq0$; 
 \item\label{item: 2, 2-1} $\B$ is a  quadric of codimension $2$;
 \item\label{item: 3, 2-1} $\B$ is a complete intersection;
 \item\label{item: 4, 2-1} $\varphi$ is of type $(2,1)$.
\end{enumerate}
If one of the previous conditions is satisfied, then 
$\sS$ is a quadric and
 $\rk(\B)=\rk(\sS)-2$.     
\end{proposition}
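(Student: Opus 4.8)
The plan is to treat condition (\ref{item: 2, 2-1}) as central --- $\B$ being a quadric of codimension $2$ --- and to prove each of (\ref{item: 1, 2-1}), (\ref{item: 3, 2-1}), (\ref{item: 4, 2-1}) equivalent to it, the single input being the identity $h^0(\PP^n,\I_\B(2))=n+2$ furnished by Lemma \ref{prop: cohomology I2B}. The implications (\ref{item: 2, 2-1})$\Rightarrow$(\ref{item: 1, 2-1}) and (\ref{item: 2, 2-1})$\Rightarrow$(\ref{item: 3, 2-1}) are immediate: a codimension-$2$ quadric is contained in the $\PP^{n-1}$ spanning it, hence in a hyperplane, and it is visibly the complete intersection of that hyperplane with a quadric.

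For (\ref{item: 1, 2-1})$\Rightarrow$(\ref{item: 2, 2-1}), suppose $\B$ is contained in a hyperplane $H=\{\ell=0\}$. The quadrics containing $H$ form the subspace $\ell\cdot H^0(\PP^n,\O_{\PP^n}(1))$ of dimension $n+1$ inside $H^0(\PP^n,\I_\B(2))$; since the latter has dimension $n+2$, there is exactly one further quadric $Q$, and the common zero locus of $\ell\cdot H^0(\O_{\PP^n}(1))$ is $H$ itself. Hence the base scheme $\B$ equals $H\cap\{Q=0\}$, and restricting $Q$ to $H\cong\PP^{n-1}$ exhibits $\B$ as a quadric hypersurface there (the restriction is nonzero, else $\ell\mid Q$); this is a codimension-$2$ quadric. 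For (\ref{item: 3, 2-1})$\Rightarrow$(\ref{item: 2, 2-1}), if $\B$ is a complete intersection I would count $h^0(\I_\B(2))$ in terms of the number $a$ of linear and $b$ of quadratic minimal generators: the linear generators contribute the quadrics vanishing on the linear $\PP^{n-a}$ they cut out and the quadratic ones contribute $b$ further dimensions, and imposing the total value $n+2$ forces $a=b=1$ (the count grows too fast for $a\geq 2$, while $a=0$ would demand $n+2$ quadric generators). Since $\B$ is the base scheme of a system of quadrics, these quadrics cut out exactly $\{\ell=0\}\cap\{Q=0\}$, which therefore coincides with $\B$, of codimension $2$.

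The heart is (\ref{item: 2, 2-1})$\Leftrightarrow$(\ref{item: 4, 2-1}) together with the final assertions. For (\ref{item: 2, 2-1})$\Rightarrow$(\ref{item: 4, 2-1}) I would choose coordinates so that $\ell=x_0$ and $\B=\{x_0=0,\,Q=0\}$; then $H^0(\I_\B(2))=\langle x_0x_0,\ldots,x_0x_n,Q\rangle$ defines $\varphi=[x_0x_0:\cdots:x_0x_n:Q]$. Writing $Q=\sum a_{ij}x_ix_j$, the image coordinates satisfy $y_0y_{n+1}=\sum a_{ij}y_iy_j$, so $\sS$ is a quadric, while the ratios $y_i/y_0=x_i/x_0$ show $\varphi^{-1}=[y_0:\cdots:y_n]$ is linear, i.e.\ $\varphi$ is of type $(2,1)$. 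Conversely, for (\ref{item: 4, 2-1})$\Rightarrow$(\ref{item: 2, 2-1}), a linear inverse means $\varphi^{-1}$ is the restriction to $\sS$ of a linear projection $\PP^{n+1}\dashrightarrow\PP^n$ from a point $O$; birationality of this restriction onto $\PP^n$ forces, by the degree count for the projection of a hypersurface from a point, that $O\in\sS$ and $\deg(\sS)=2$ (recall $\sS$ is nonlinear). Unprojecting from $O\in\sS=\{y_{n+1}\ell+Q'=0\}$ recovers $\varphi=[x_0\ell:\cdots:x_n\ell:-Q']$, whose base locus is $\{\ell=0\}\cap\{Q'=0\}$, a codimension-$2$ quadric. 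Finally, for the rank statement I would normalise coordinates so that $\sS:\,y_0y_{n+1}+Q''(y_1,\ldots,y_n)=0$ with $\B=\{y_0=0,\,Q''=0\}$; the hyperbolic summand $y_0y_{n+1}$ contributes rank $2$ in variables disjoint from those of $Q''$, giving $\rk(\sS)=\rk(Q'')+2=\rk(\B)+2$.

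The main obstacle I anticipate is scheme-theoretic bookkeeping rather than any deep geometry: one must ensure throughout that $\B$ \emph{equals} (not merely contains) the relevant codimension-$2$ quadric, which in (\ref{item: 3, 2-1})$\Rightarrow$(\ref{item: 2, 2-1}) is exactly what rules out spurious complete intersections carrying minimal generators of degree $\geq 3$, since the quadrics through $\B$ can only cut out the codimension-$2$ locus $\{\ell=0\}\cap\{Q=0\}$. A secondary point worth recording is that the normality hypothesis on $\sS$ forces $\rk(\sS)\geq 3$, so that $\rk(\B)=\rk(\sS)-2\geq 1$ and $\B$ is a genuine quadric.
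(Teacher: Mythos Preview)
Your argument is essentially the paper's, organised around (\ref{item: 2, 2-1}) rather than via the paper's cycle $(\ref{item: 1, 2-1})\Rightarrow(\ref{item: 2, 2-1}),(\ref{item: 3, 2-1})$; $(\ref{item: 2, 2-1})\Rightarrow(\ref{item: 1, 2-1}),(\ref{item: 4, 2-1})$; $(\ref{item: 4, 2-1})\Rightarrow(\ref{item: 2, 2-1})$; $(\ref{item: 3, 2-1})\Rightarrow(\ref{item: 1, 2-1})$. The implications $(\ref{item: 1, 2-1})\Leftrightarrow(\ref{item: 2, 2-1})$, $(\ref{item: 2, 2-1})\Rightarrow(\ref{item: 4, 2-1})$, and the rank computation match the paper's explicit coordinate calculations. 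Your $(\ref{item: 3, 2-1})\Rightarrow(\ref{item: 2, 2-1})$ is a slightly more elaborate version of the paper's $(\ref{item: 3, 2-1})\Rightarrow(\ref{item: 1, 2-1})$: the paper simply notes that if $h^0(\I_\B(1))=0$ then all minimal generators of a complete intersection have degree $\geq 2$, whence $h^0(\I_\B(2))\leq\mathrm{codim}(\B)\leq n<n+2$.

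There is, however, a genuine gap in your $(\ref{item: 4, 2-1})\Rightarrow(\ref{item: 2, 2-1})$. You assert that birationality of the projection $\pi_O|_\sS:\sS\dashrightarrow\PP^n$ forces $\deg(\sS)=2$ ``by the degree count''. But the degree of this projection is $\deg(\sS)-\mathrm{mult}_O(\sS)$, so birationality only gives $\deg(\sS)=\mathrm{mult}_O(\sS)+1$; nothing yet prevents $O$ from being a singular point of $\sS$ (normality only bounds $\dim\sing(\sS)$). A normal cubic with a double point, for instance, projects birationally from that point. Your subsequent unprojection step then presupposes the quadric form of $\sS$, so the argument as written is circular at this point. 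The paper avoids this by arguing directly: after choosing coordinates so that $\varphi^{-1}=[y_0:\cdots:y_n]$, the identity $\varphi^{-1}\circ\varphi=\mathrm{id}$ gives $[F_0:\cdots:F_n]=[x_0:\cdots:x_n]$ as rational maps, hence $F_i=f\cdot x_i$ for a common linear form $f$, and then $\B=V(F_0,\ldots,F_{n+1})=V(f,F_{n+1})$ is a codimension-$2$ quadric. The fact that $\sS$ is a quadric (and that $O$ is a smooth point of it) then follows \emph{a posteriori} from the explicit description in $(\ref{item: 2, 2-1})\Rightarrow(\ref{item: 4, 2-1})$, rather than being needed as input.
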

\begin{proof} 
$(\ref{item: 1, 2-1}\Rightarrow\ref{item: 2, 2-1}\mbox{ and }\ref{item: 3, 2-1})$.
 If $f\in H^0(\PP^n,\I_{\B}(1))$ is a nonzero linear form, 
then the quadrics
 $x_0f,\ldots,x_n f$  
generate a subspace of codimension $1$
 of $H^0(\PP^n,\I_{\B}(2))$,   by Lemma \ref{prop: cohomology I2B}. Hence there exists a 
quadric $F$ such that
$H^0(\PP^n,\I_{\B}(2))=\langle F, x_0f,\ldots,x_nf\rangle$
and $\B=V(F,f)\subset V(f)\subset\PP^{n}$.

\noindent
$(\ref{item: 2, 2-1}\Rightarrow\ref{item: 1, 2-1}\mbox{ and }\ref{item: 4, 2-1})$. 
$\B$ is necessarily degenerate and,
modulo a change of coordinates
on $\PP^{n}$, 
we can suppose
 $\B=V(x_0^2+\cdots+x_s^2, x_n)$,   with $s\leq n-1$.
Hence 
$$
\varphi([x_0,\ldots,x_n])=
[x_0x_n,\ldots,x_n^2,x_0^2+\cdots+x_s^2]=[y_0,\ldots,y_{n+1}],
$$
$$
\sS=V(y_0^2+\cdots+y_s^2-y_{n}y_{n+1}),\quad \varphi^{-1}([y_0,\ldots,y_{n+1}])=[y_0,\ldots,y_{n}]. 
$$

\noindent
$(\ref{item: 4, 2-1}\Rightarrow\ref{item: 2, 2-1})$.     We can suppose that
 $\varphi^{-1}$ is the projection from the point
 $[0,\ldots,0,1]$.     Thus, 
if $\varphi=(F_0,\ldots,F_{n+1})$,   then $F_0,\ldots,F_n$ have
a common factor
 $f$ and  $\B=V(F_{n+1},f)$.

\noindent
$(\ref{item: 3, 2-1}\Rightarrow\ref{item: 1, 2-1})$.  If $h^0(\PP^n,\I_{\B}(1))=0$ and $\B$ is 
a complete intersection,  then every minimal system of generators of the ideal of
 $\B$ consists of forms of degree $2$, but then  
$h^0(\PP^n,\I_{\B}(2))=n-\dim(\B)<n+2$,   
absurd.    
\end{proof}

From now on, unless otherwise specified and except in \S \ref{sec: nonspecial case},
we keep the following notation:
\begin{notation}\label{notation: factorial hypersurface}
 Let $n\geq3$ and 
$\varphi:\PP^{n}\dashrightarrow\overline{\varphi(\PP^n)}=\sS\subset\PP^{n+1}$ be
a special birational transformation  of type $(2,d)$, with $d\geq 2$ and 
with $\sS$ a factorial hypersurface of degree $\Delta\geq 2$
(in particular, when $\Delta=2$, it is enough to require 
 $\rk(\sS)\geq 5$).  
Observe that
 $\Pic(\sS)=\ZZ\langle\O_{\sS}(1)\rangle$ (see \cite[\Rmnum{4} Corollary~3.2]{hartshorne-ample})
and $\omega_{\reg(\sS)}\simeq \O_{\reg(\sS)}(\Delta-n-2)$.
Moreover, denote by  $\B\subset\PP^n$ and $\B'\subset\sS\subset\PP^{n+1}$ 
respectively the base locus  of $\varphi$ and $\varphi^{-1}$
and assume 
\begin{equation}\label{eq: hypothesis on sing locus hypersurface}
(\B')_{\mathrm{red}}\neq (\sing(\sS))_{\mathrm{red}}.
\end{equation}
Put $r=\dim(\B)$, $r'=\dim(\B')$, $\delta=\delta(\B)$ the secant defect, 
$\lambda=\deg(\B)$, $g=g(\B)$ the sectional genus, $P_{\B}(t)$ the Hilbert polynomial,
$i(\B)$ and $c(\B)$ respectively index and coindex (when $\B$ is a Fano variety).
\end{notation}

\section{Properties of the base locus}\label{sec: properties base locus}
\begin{proposition}\label{prop: dimension formula}  
\begin{enumerate} 
\item $\B$ is a
$QEL$-variety of dimension and  type given by
\begin{displaymath}
r=\frac{d\,n-\Delta-3\,d+3}{2\,d-1},\quad
\delta =\frac{n-2\,\Delta-2\,d+4}{2\,d-1}.
\end{displaymath}
 \item $\B'$ is irreducible, generically reduced, of dimension
\begin{displaymath}
r'=\frac{2\left( d\,n-n+\Delta-d-1\right) }{2\,d-1}.
\end{displaymath}
\end{enumerate}
\end{proposition}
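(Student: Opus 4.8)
The plan is to resolve $\varphi$ as in Lemma~\ref{prop: cohomology I2B}, encode the two defining degrees as linear relations in $\Pic(X)$, and then read off $r$ and $r'$ by writing $K_X$ in two ways and comparing coefficients; the secant defect $\delta$ will be obtained separately from the geometry of the contraction.

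First I would fix the resolution $\pi\colon X=\Bl_{\B}(\PP^n)\to\PP^n$ with exceptional divisor $E$ and $\pi'=\varphi\circ\pi\colon X\to\sS$, as in the proof of Lemma~\ref{prop: cohomology I2B}. Since $\B$ is smooth and connected, $\Pic(X)=\ZZ H\oplus\ZZ E$ with $H=\pi^{\ast}\O_{\PP^n}(1)$, and I set $H'=\pi'^{\ast}\O_{\sS}(1)$. Because $\varphi$ is quadratic with smooth base locus, the quadrics through $\B$ vanish with multiplicity one along $\B$, so their strict transform is base-point-free of class $H'=2H-E$ (this is the identification already used in Lemma~\ref{prop: cohomology I2B}). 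Dually, since $\varphi^{-1}$ is given by a system in $|\O_{\sS}(d)|$ with $d$ minimal, hence without fixed component, writing $E'$ for the $\pi'$-exceptional divisor over $\B'$ and $m\ge1$ for the multiplicity along $\B'$, one gets $H=dH'-mE'$. Combining the two relations yields $mE'=(2d-1)H-dE$. The crucial point is that $E'$ is an honest integral class, so $m$ divides $\gcd(2d-1,d)=1$; therefore $m=1$ and $E'=(2d-1)H-dE$.

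Next I would compute $K_X$ twice. The blow-up formula gives $K_X=-(n+1)H+(n-r-1)E$. On the other side, using $\omega_{\reg(\sS)}\simeq\O_{\reg(\sS)}(\Delta-n-2)$ and $\Pic(\sS)=\ZZ\langle\O_{\sS}(1)\rangle$, adjunction gives $\pi'^{\ast}K_{\sS}=(\Delta-n-2)H'$; since hypothesis~(\ref{eq: hypothesis on sing locus hypersurface}) places the general point of $\B'$ in $\reg(\sS)$, where $\B'_{\mathrm{red}}$ is smooth of codimension $n-r'$ by generic smoothness, the divisor $E'$ has discrepancy $n-r'-1$, whence $K_X=(\Delta-n-2)H'+(n-r'-1)E'$. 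Substituting $H'=2H-E$ and $E'=(2d-1)H-dE$ and equating the coefficients of $H$ and of $E$ in the two expressions produces two linear equations: the $H$-coefficient equation solves for $r'$, and the $E$-coefficient equation then solves for $r$, giving exactly the stated formulas. That $\B'$ is irreducible is immediate since $\B'=\pi'(E')$ is the image of the irreducible $E'$, and generic reducedness again follows from generic smoothness in characteristic zero.

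Finally I would treat the $QEL$ property and the value of $\delta$. Restricting the defining quadrics to a secant line $\ell$ of $\B$ shows $\varphi$ contracts $\ell$: the sections of $\O_{\ell}(2)$ vanishing at the two base points span a single class, so $\varphi|_{\ell}$ is constant; conversely a contracted line must be secant. Hence the (divisorial) contracted locus of $\varphi$, namely the image $\pi(E')$ of the prime divisor $E'\neq E$, coincides with $\Sec(\B)$; as $\pi$ is birational and does not contract $E'$, this forces $\dim\Sec(\B)=n-1$. The same analysis identifies the general fibre of $\varphi$ with a linear space cutting $\B$ in a quadric hypersurface, which is precisely the $QEL$ property (the essential role of $d_1=2$); here one invokes the theory of \cite{russo-qel1}. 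Then $\B$ is a $QEL$-variety of type $\delta=2r+1-\dim\Sec(\B)=2r-n+2$, and substituting the formula for $r$ yields $\delta=(n-2\Delta-2d+4)/(2d-1)$. I expect the genuinely geometric input---that the entry locus is a quadric and that $E'$ is the unique exceptional divisor realizing $\Sec(\B)$ as a hypersurface---to be the main obstacle, whereas the formulas for $r$ and $r'$ are a purely formal consequence of the divisor bookkeeping once $m=1$ is secured.
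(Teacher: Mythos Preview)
Your overall strategy coincides with the paper's: resolve via $\pi\colon X=\Bl_{\B}(\PP^n)\to\PP^n$, encode both maps as $H'=2H-E$ and $H=dH'-E'$ in $\Pic(X)=\ZZ H\oplus\ZZ E$, and read off $r,r'$ by comparing the two expressions for $K_X$. Your $\gcd$ argument for $m=1$ is essentially how the paper deduces $E'=(E')_{\mathrm{red}}$ from $E'\sim(2d-1)H-dE$.

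The genuine gap is the assertion that ``$E'$ has discrepancy $n-r'-1$''. This amounts to saying that, over the open set $U=\reg(\sS)\setminus\sing((\B')_{\mathrm{red}})$, the morphism $\pi'$ coincides with the blow-up of $(\B')_{\mathrm{red}}\cap U$. Generic smoothness only tells you $(\B')_{\mathrm{red}}$ is smooth at its general point; it does \emph{not} tell you that a proper birational morphism from a smooth variety, with a single prime exceptional divisor over a smooth center, is the blow-up of that center. The paper obtains this from \cite[Theorem~1.1]{ein-shepherdbarron}, and this is precisely where hypothesis~(\ref{eq: hypothesis on sing locus hypersurface}) enters (to guarantee $U\cap(\B')_{\mathrm{red}}\neq\emptyset$). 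Without this step the formula for $r'$ does not follow, and neither does the $QEL$ property: identifying the general entry locus with a quadric requires knowing that the general fibre of $\pi'$ over $\B'$ is a linear $\PP^{n-r'-1}$, which is again the blow-up structure (the paper defers to \cite[Proposition~2.3]{ein-shepherdbarron} for this part). A secondary gap is that you treat $E'$ as a single prime divisor without justification; primitivity of the class $(2d-1)H-dE$ does not rule out a reducible exceptional locus. The paper deduces irreducibility of $E'$ from the factoriality of $\sS$ via \cite[Proposition~1.3]{ein-shepherdbarron}, and only then concludes that $\B'$ is irreducible. So, contrary to your closing remark, neither the formula for $r'$ nor the $QEL$ statement is purely formal once $m=1$; both rest on the Ein--Shepherd-Barron blow-up criterion, which is the substantive input you are missing. (The formula for $r$ alone \emph{is} formal once the divisor relations are in place; cf.\ Remark~\ref{remark: dimension formula without hypothesis}.)
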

\begin{proof} See also \cite[Proposition~2.1]{ein-shepherdbarron}.
Consider the  diagram (\ref{eq: diagram resolving map}), 
where $\pi:X=\Bl_{\B}(\PP^n)\rightarrow\PP^n$
  and $\pi'=\varphi\circ\pi$.     
$X$   
can be identified
 with the graphic 
 $\Graph(\varphi)\subset\PP^n\times\sS$ and the
maps $\pi$,   $\pi'$ 
can be identified
 with the projections onto the factors.
It follows that $(\B)_{\mathrm{red}}$ (resp. $(\B')_{\mathrm{red}}$) is the set 
of the points $x$ such that the fiber $\pi^{-1}(x)$ (resp. ${\pi'}^{-1}(x)$) has 
positive dimension. Denote by $E$ the exceptional divisor of $\pi$, 
  $E'={\pi'}^{-1}(\B')$, 
$H\in|\pi^{\ast}(\O_{\PP^{n}}(1))|$,   
$H'\in|{\pi'}^{\ast}(\O_{\sS}(1))|$ 
and  note that, by the factoriality of 
 $\sS$ and by the proof of
\cite[Proposition~1.3]{ein-shepherdbarron}, it follows
that $E'$ is an irreducible divisor, in particular $\B'$ is irreducible.
Moreover we have the relations
\begin{equation}\label{eq: HH'}
H\sim d\,H'-E',\quad H'\sim 2\,H-E,
\end{equation}
from which we get
\begin{equation}\label{eq: EE'}
E\sim(2\,d-1)\,H'-2\,E',\quad E'\sim (2\,d-1)\,H-d\,E,
\end{equation}
and in particular by (\ref{eq: EE'}) it follows  $E'=(E')_{\mathrm{red}}$.
Put 
\begin{equation}\label{eq: positions}
U=\reg\left(\sS\right)\setminus\sing\left(\left(\B'\right)_{\mathrm{red}}\right),\quad V={\pi'}^{-1}(U),\quad Z=U \cap (\B')_{\mathrm{red}}.
\end{equation}
Observe 
 that, since $X$ is smooth and we have assumed 
  (\ref{eq: hypothesis on sing locus hypersurface}),
we have
$Z\neq\emptyset$. Thus, by \cite[Theorem~1.1]{ein-shepherdbarron}, 
 ${\pi'}|_{V}:V\rightarrow U$ coincides
 with the blow-up  of $U$ along $Z$;
in particular
$$
\Pic(X)=\ZZ\langle H\rangle \oplus\ZZ\langle E\rangle=\ZZ\langle H'\rangle\oplus\ZZ\langle E'\rangle=\Pic(V).
$$
Now, by \cite[\Rmnum{2} Exercise~8.5]{hartshorne-ag} and (\ref{eq: HH'}) and (\ref{eq: EE'}), we have
\begin{eqnarray}\label{eq: K_X}
K_X&\sim&\pi^{\ast}(K_{\PP^n}) + (\mathrm{codim}_{\PP^n}(\B)-1)\,E  \nonumber \\
 & \sim & (-n-1)\,H+(n-r-1)\,E  \nonumber  \\
 & \sim & (-2\,d\,r+r+d\,n-n-3\,d+1)\,H'+(2\,r-n+3)\,E' 
\end{eqnarray}
and also
\begin{eqnarray}\label{eq: K_V}
 K_X|V \sim K_V&\sim&{\pi'}^{\ast}(K_{\reg(\sS)}|U) + (\mathrm{codim}_{U}(Z)-1)\,E' \nonumber\\
   & \sim & (\deg(\sS)-(n+1)-1)\,H' + (\mathrm{codim}_{\sS}(\B')-1)\,E' \nonumber \\
   & \sim & (\Delta-n-2)\,H'+(n-r'-1)\,E'. 
\end{eqnarray}
By the comparison of
 (\ref{eq: K_X}) and (\ref{eq: K_V})
we obtain the expressions of
 $r=\dim(\B)$ and $r'=\dim(\B')$.
Moreover, the proof of \cite[Proposition~2.3(a),(b)]{ein-shepherdbarron} 
adapts to our case,
  producing that the secant  variety  $\Sec(\B)$ is a hypersurface of degree
 $2d-1$ in $\PP^n$ and  $\B$ is a $QEL$-variety of type $\delta=n-r'-2$.
Finally,
 we have  $\B'\cap U=(\B')_{\mathrm{red}}\cap U$ 
by the argument in \cite[\S  2.2]{ein-shepherdbarron}. 
\end{proof}

\begin{remark}\label{remark: dimension formula without hypothesis}
Proceeding as above and
interchanging the rules in (\ref{eq: positions}) by
\begin{equation}
U=\reg\left(\sS\right)\setminus\left(\B'\right)_{\mathrm{red}},\quad V={\pi'}^{-1}(U),\quad Z=U \cap (\B')_{\mathrm{red}}=\emptyset ,
\end{equation}
we deduce that $\pi'|_{V}:V\rightarrow U$ is an isomorphism  and in 
particular $\Pic(V)={\pi'}^{\ast}(\Pic(U))=\ZZ\langle H' \rangle$
and $K_V={\pi'}^{\ast}(K_U)=(\Delta-n-2) H'$.
Now, by (\ref{eq: K_X}),
$K_X|_V = K_{V} \sim (-2\,d\,r+r+d\,n-n-3\,d+1)H'$,
and hence, also without to assume 
 (\ref{eq: hypothesis on sing locus hypersurface}), we get  the formula
$r=\left(dn-\Delta-3d+3\right)/\left(2d-1\right)$.
\end{remark}

Lemma \ref{prop: cohomology twisted ideal} is slightly stronger than
what is obtained by applying
directly the main result in \cite{bertram-ein-lazarsfeld}. However
it is essential to study $\B$  in the case $\delta=0$.  
Note that the assumptions on $\sS$ are not necessary.
\begin{lemma}[\cite{mella-russo-baselocusleq3}]\label{prop: cohomology twisted ideal}
 For $i>0$ and $t\geq n-2r-1$ we have
$
H^i(\PP^n, \I_{\B,\PP^n}(t))=0
$.
\end{lemma}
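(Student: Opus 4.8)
The plan is to strip off the structure sheaf and transfer the whole question onto $\B$. From $0\to\I_{\B}(t)\to\O_{\PP^{n}}(t)\to\O_{\B}(t)\to0$ and the vanishing of the middle cohomology of $\O_{\PP^{n}}$, the assertion $H^{i}(\PP^{n},\I_{\B}(t))=0$ for every $i>0$ splits into: (a) surjectivity of the restriction $H^{0}(\O_{\PP^{n}}(t))\to H^{0}(\O_{\B}(t))$, and (b) $H^{j}(\B,\O_{\B}(t))=0$ for $1\le j\le r$. The threshold $n-2r-1$ is exactly the degree of the dualizing sheaf $\omega_{Y}\simeq\O_{Y}(n-2r-1)$ of a complete intersection $Y$ of $n-r$ quadrics, so Serre duality on such a $Y$ is what will fix the range of $t$; by contrast the direct Bertram--Ein--Lazarsfeld estimate \cite{bertram-ein-lazarsfeld} only reaches $t\ge n-r$, and the whole point of the lemma is to gain the remaining $r+1$ degrees down to $t=n-2r-1$.

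For (b) I would use that, by Proposition \ref{prop: dimension formula}, $\B$ is a $QEL$-variety of type $\delta=2r+2-n$ with $\Sec(\B)\subsetneq\PP^{n}$. For $\delta\ge1$ the theory of $(L)QEL$-varieties (\cite{russo-qel1}, \cite{russo-specialvarieties}, \cite{ionescu-russo-conicconnected}) shows that a smooth such $\B$ is a conic-connected Fano manifold with $K_{\B}\sim-\frac{r+\delta}{2}H$, where $H=\O_{\B}(1)$. Kodaira vanishing then yields $H^{j}(\O_{\B}(t))=H^{j}(\omega_{\B}\otimes(tH-K_{\B}))=0$ for every $j>0$ as soon as $tH-K_{\B}$ is ample, i.e. for $t>\frac{n-3r-2}{2}$; since $n-2r-1-\frac{n-3r-2}{2}=\frac{n-r}{2}>0$, this already covers all of $t\ge n-2r-1$, the boundary included. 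When $\delta=0$ the threshold drops to $t\ge1$, the twist $\O_{\B}(t)$ is positive, and (b) must be argued separately from the ampleness of $\O_{\B}(1)$ and the non-effectivity of $K_{\B}-H$, since here $\B$ need not be Fano.

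For (a) I would return to the complete intersection. As $h^{0}(\I_{\B}(2))=n+2\ge n-r$ by Lemma \ref{prop: cohomology I2B}, one may choose $n-r$ general quadrics through $\B$ cutting out an arithmetically Cohen--Macaulay $Y\supseteq\B$ of pure dimension $r$; then $\I_{Y}$ has no intermediate cohomology and $\B$ is linked by $Y$ to a residual scheme $\B^{*}$. The liaison isomorphism $\I_{\B/Y}\simeq\omega_{\B^{*}}(2r+1-n)$ together with Serre duality on $Y$ rewrites $H^{1}(\I_{\B}(t))$ as $H^{r-1}(\O_{\B^{*}}(n-2r-1-t))^{\vee}$, which vanishes for $t>n-2r-1$ because the twist is then negative; the deficiency modules of $\B$ and $\B^{*}$ being dual under the linkage, this is powered by the same positivity as in (b). (Alternatively, (a) is immediate once one knows that $\B$, smooth and cut out by quadrics, is projectively normal, as it is for all four Severi varieties and their hyperplane sections.)

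The hard part will be the positivity input and the behaviour at the edge. First, securing the Fano structure and the exact index $-\frac{r+\delta}{2}H$ of $\B$ rests on Russo's classification of $(L)QEL$-varieties and is what makes Kodaira applicable. Second, the critical degree $t=n-2r-1$, where $\omega_{Y}=\O_{Y}$ and the cohomology of $\I_{Y}$ itself no longer vanishes, is precisely where the naive Bertram--Ein--Lazarsfeld bound fails and where the surjectivity (a) and the residual cohomology of $\B^{*}$ have to be controlled by hand. This is also why the case $\delta=0$, in which the threshold $t=1$ sits right at the edge of positivity, has to be isolated and treated on its own.
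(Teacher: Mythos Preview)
Your strategy is to split the vanishing into (a) projective normality in degree $t$ and (b) vanishing of $H^{j}(\O_{\B}(t))$, and to feed both with structural information about $\B$ (the Fano index from QEL theory, liaison with a complete intersection). The paper does something entirely different and much shorter: it transports the question to the blow-up $X=\Bl_{\B}(\PP^{n})$. Since $\B$ is smooth one has $H^{i}(\PP^{n},\I_{\B}(t))\simeq H^{i}(X,tH-E)$, and the key line is
\[
tH-E \;=\; K_{X}\;+\;(n-r)\,H'\;+\;\bigl(t-(n-2r-1)\bigr)H,
\]
obtained from $K_{X}=-(n+1)H+(n-r-1)E$ and $H'\sim 2H-E$. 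Here $H'=\pi'^{\ast}\O_{\sS}(1)$ with $\pi'$ generically finite onto the hypersurface $\sS$, so $(n-r)H'$ is nef and big; the second summand is nef as soon as $t\ge n-2r-1$. Kawamata--Viehweg vanishing on the smooth variety $X$ then kills all $H^{i}$ for $i>0$ in one stroke. There is no case distinction on $\delta$, no liaison, and the boundary $t=n-2r-1$ is automatic.

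Your route, by contrast, has genuine gaps. First, you openly leave (b) unproved when $\delta=0$; but the paper singles out this case as the one where the lemma is actually needed, and there $\B$ need not be Fano, so your Kodaira-on-$\B$ argument has no traction. Second, the liaison step for (a) is only a gesture: the residual scheme $\B^{*}$ in a complete intersection of $n-r$ quadrics has no reason to be Cohen--Macaulay (or even reduced), so writing $H^{1}(\I_{\B}(t))$ as a Serre dual on $\B^{*}$ and declaring it zero is unjustified; and you yourself note that the edge $t=n-2r-1$ is not covered. Third, the parenthetical ``alternatively, $\B$ is projectively normal'' is circular here, since in the paper projective normality is \emph{deduced} from this very lemma. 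The paper's argument avoids all of this by never using anything about $\B$ except smoothness: the only geometric input is the birationality of $\varphi$, which is exactly what makes $H'=2H-E$ nef and big on $X$.
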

\begin{proof} 
The proof is located in
\cite[page~6]{mella-russo-baselocusleq3}, 
but we report it for the reader's convenience.
We use the notation of the proof of Proposition 
 \ref{prop: dimension formula}. For $i>0$ and $t\geq n-2r-1$ we have
 \begin{eqnarray*}
  H^i(\PP^n,\I_{\B,\PP^n}(t)) & = & H^i(\PP^n, \O_{\PP^n}(t)\otimes\I_{\B,\PP^n}) \\
 \mbox{(\cite[page~592]{bertram-ein-lazarsfeld})} &= & H^i(X, tH-E) \\
                                 &=& H^i(X, K_X+(tH-E-K_X)) \\
                                 &=& H^i(X, K_X+(t+n+1)H+(r-n)E) \\
                                 &=& H^i(X, K_X+(n-r)(2H-E)+(t-n+2r+1) H) \\
 \mbox{(\cite[page~20]{debarre})} &=& H^i(X, K_X+ \underbrace{
\underbrace{(n-r)H'}_{\mathrm{nef\ and\ big}}+\underbrace{(t-(n-2r-1))H}_{\mathrm{nef}}}_{\mathrm{nef\ and\ big}}) \\
 \mbox{(Kodaira Vanishing Theorem)}                 &=& 0.
 \end{eqnarray*}
\end{proof}
\begin{proposition}\label{prop: cohomology properties}   
\begin{enumerate}
 \item\label{part: B is linearly normal} $\B$ is nondegenerate 
and projectively normal.
 \item\label{part: B is fano} If $\delta>0$, then either $\B$ is a Fano variety 
 of the first species  (that is with  $\Pic(\B)\simeq \ZZ\langle\O_{\B}(1) \rangle$) of  
index $i(\B)=(r+\delta)/2$,  or it is a hyperplane section  of the 
Segre variety $\PP^2\times\PP^2\subset\PP^8$.
 \item\label{part: hilbert polynomial}
If $\delta>0$ and $\B$  is not a hyperplane section  
 of $\PP^2\times\PP^2\subset\PP^8$,   
putting $i=i(\B)$ and $P(t)=P_{\B}(t)$,   we have
$$
\begin{array}{c}
\begin{array}{ccc}
 P(0)=1,& P(1)=n+1, & P(2)=\left({n}^{2}+n-2\right)/2,\\
\end{array}\\
P(-1)=P(-2)=\cdots=P(-i+1)=0,\\
\forall\,t\quad P(t)=(-1)^r P(-t-i).
\end{array}
$$
In particular, if $c(\B)=r+1-i\leq5$, it remains determined $P(t)$.     
\item\label{part: cohomology} Hypothesis as in part  
 \ref{part: hilbert polynomial}. We have 
$$
 h^i(\B,\O_{\B}(t))=\left\{
\begin{array}{cccc}   0 &\mbox{if}&i=0,&t<0\\
                    P(t) &\mbox{if}&i=0,&t\geq0\\     
                     0 &\mbox{if}&0<i<r,&t\in\ZZ\\
                     (-1)^r P(t)&\mbox{if}&i=r,&t<0\\
                     0 &\mbox{if}&i=r,& t\geq0
\end{array} \right.
$$
\end{enumerate}
\end{proposition}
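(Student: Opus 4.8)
I would prove the four assertions in order, since each feeds the next, the only genuinely substantial input being the Fano structure in part \ref{part: B is fano}. \emph{For part \ref{part: B is linearly normal}}, nondegeneracy is immediate from Proposition \ref{prop: type 2-1}: a nonzero section of $\I_\B(1)$ would put us in case \ref{item: 4, 2-1}, i.e.\ type $(2,1)$, against $d\geq 2$; hence $h^0(\PP^n,\I_\B(1))=0$. For projective normality I must show $H^1(\PP^n,\I_\B(t))=0$ for all $t\geq 1$ (the value $t=0$ being automatic since $\B$ is connected). The key observation is the numerical identity $n-2r-1=1-\delta$, an elementary consequence of the dimension formula of Proposition \ref{prop: dimension formula}; since $\delta\geq 0$ this quantity is at most $1$, so Lemma \ref{prop: cohomology twisted ideal} applies to every $t\geq 1$ and furnishes the required vanishing.

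\emph{Part \ref{part: B is fano} is the crux.} By Proposition \ref{prop: dimension formula}, $\B$ is a smooth $QEL$-variety of type $\delta$, and $\Sec(\B)$ is a hypersurface of degree $2d-1\geq 3$, so $\Sec(\B)\subsetneq\PP^n$; being $QEL$ of type $\delta>0$, $\B$ is conic-connected. I would then feed this into the structure theory of $LQEL$- and conic-connected manifolds (\cite{russo-qel1}, \cite{ionescu-russo-conicconnected}): under $\Sec(\B)\subsetneq\PP^n$ such a manifold is Fano with $-K_\B=\frac{r+\delta}{2}\,\O_\B(1)$, and $\Pic(\B)=\ZZ\langle\O_\B(1)\rangle$ except for a short list of higher-Picard-number examples. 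The difficulty — and the main obstacle of the whole proposition — is to check that the hypotheses here ($\delta>0$ and a genuine quadric entry locus) eliminate all those exceptions except the hyperplane section of $\PP^2\times\PP^2\subset\PP^8$, whose Picard number is $2$ by the Lefschetz theorem, and to extract the exact index $(r+\delta)/2$.

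\emph{For part \ref{part: hilbert polynomial}}, assuming now $\B$ Fano of the first species with $K_\B=\O_\B(-i)$, $i=(r+\delta)/2\geq 1$, write $P(t)=\chi(\O_\B(t))$. Since $-K_\B$ is ample, Kodaira vanishing collapses $P(t)$ to $h^0(\O_\B(t))$ for $t\geq 0$, and combined with part \ref{part: B is linearly normal} this gives $P(0)=h^0(\O_\B)=1$, $P(1)=h^0(\O_{\PP^n}(1))=n+1$, and $P(2)=h^0(\O_{\PP^n}(2))-h^0(\I_\B(2))={n+2 \choose 2}-(n+2)=(n^2+n-2)/2$ by Lemma \ref{prop: cohomology I2B}. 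For $-i<t<0$ Kodaira kills the higher cohomology while $h^0(\O_\B(t))=0$, whence $P(-1)=\cdots=P(-i+1)=0$; the relation $P(t)=(-1)^rP(-t-i)$ is Serre duality together with $K_\B=\O_\B(-i)$. Finally, the three values at $0,1,2$, the symmetry-translated values at $-i,-i-1,-i-2$, and the $i-1$ roots $-1,\dots,-(i-1)$ furnish $i+5$ distinct points at which $P$ is known; since $\deg P=r$, interpolation determines $P$ as soon as $i+5\geq r+1$, i.e.\ exactly when $c(\B)=r+1-i\leq 5$.

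\emph{Part \ref{part: cohomology} is then formal.} For $t\geq 0$ one has $\O_\B(t)=K_\B\otimes\O_\B(t+i)$ with $\O_\B(t+i)$ ample, so Kodaira gives $h^q(\O_\B(t))=0$ for every $q>0$ and hence $h^0=\chi=P(t)$; this settles all rows with $t\geq 0$. For $t<0$ one has $h^0(\O_\B(t))=0$ since $\O_\B(1)$ is ample, and Serre duality identifies $h^q(\O_\B(t))$ with $h^{r-q}(K_\B\otimes\O_\B(-t))$, which Kodaira annihilates whenever $r-q>0$, i.e.\ for $0<q<r$; the single surviving term $q=r$ is then extracted from $P(t)=\chi(\O_\B(t))=(-1)^rh^r(\O_\B(t))$, giving $h^r(\O_\B(t))=(-1)^rP(t)$. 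This reproduces the table entry by entry, and confirms in passing that the intermediate cohomology of $\B$ vanishes (so that $\B$ is arithmetically Cohen--Macaulay), the one place where one must be careful to use Serre duality rather than Lemma \ref{prop: cohomology twisted ideal} for very negative $t$.
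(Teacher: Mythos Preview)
Your proposal is correct and follows essentially the same route as the paper. Two minor differences worth flagging: in part~\ref{part: B is fano} the paper is more explicit about the case split (using \cite[Theorem~2.3]{russo-qel1} for $\delta\geq 3$ and \cite[Theorem~2.2]{ionescu-russo-conicconnected} for $0<\delta\leq 2$) and about how the surviving exceptions are discarded --- the rational normal scroll is ruled out because $n\geq 2r+1$ forces $\delta=0$, and $\PP^2\times\PP^3\subset\PP^{11}$ is ruled out by a quadric count ($h^0(\I_{\PP^2\times\PP^3}(2))=18\neq 13$), not by an entry-locus argument; and in part~\ref{part: hilbert polynomial} the paper obtains the vanishing $h^j(\O_\B(k))=0$ for $j>0$, $k\geq 1-\delta$ via Lemma~\ref{prop: cohomology twisted ideal} and the ideal-sheaf sequence rather than directly from Kodaira, but the effect is the same.
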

\begin{proof}
(\ref{part: B is linearly normal}) $\B$ is nondegenerate 
by Proposition \ref{prop: type 2-1}.
$\B$ is projectively normal if and 
only if  $h^1(\PP^n,\I_{\B,\PP^n}(k))=0$ for every $k\geq1$,    and this,
by Lemma \ref{prop: cohomology twisted ideal},
is true whenever 
$1\geq n-2\,r-1$, 
i.e. whenever
 $\delta\geq0$.  
Note that for   
 $\delta>0$ the thesis follows also from  \cite[Corollary~2]{bertram-ein-lazarsfeld}.

\noindent
(\ref{part: B is fano}) We know that
 $\B$ is a 
$QEL$-variety of type $\delta>0$.     If $\delta\geq3$ the thesis is 
contained in \cite[Theorem~2.3]{russo-qel1};  
for $0<\delta\leq2$ we apply  
\cite[Theorem~2.2]{ionescu-russo-conicconnected}.
Thus we have that
$\B$ is a Fano variety with
 $\Pic(\B)\simeq\ZZ\langle\O_{\B}(1)\rangle$,   or
 $\B$ is one of the following:
\begin{enumerate}
 \item\label{item: rational normal scroll} a rational normal scroll,
 \item a hyperplane section  of $\PP^2\times\PP^2\subset\PP^8$,
 \item\label{item: P2xP3inP11} $\PP^2\times\PP^3\subset\PP^{11}$.
\end{enumerate}
This follows only by imposing  that the pair $(r,n)$ corresponds to
that of
 a variety listed in \cite[Theorem~2.2]{ionescu-russo-conicconnected}.
Of course  case \ref{item: P2xP3inP11} is impossible 
because $h^0(\PP^{11},\I_{\B}(2))=13$ while
 $h^0(\PP^{11},\I_{\PP^2\times\PP^3}(2))=18$.
Finally, if $\B$ is as in
 case \ref{item: rational normal scroll},
since $\B$ is smooth and $n\geq 2r+1$, 
we get
 $\dim(\Sec(\B))=2r+1$ (see for example \cite[Proposition~1.5.3]{russo-specialvarieties}) 
and hence $\delta=0$ (reasoning on the number of quadrics also it results
 $r=1$, $n=4$, $d=2$, $\Delta=2$).

\noindent
(\ref{part: hilbert polynomial}) By Lemma \ref{prop: cohomology twisted ideal} 
we have
$h^j(\PP^n,\I_{\B,\PP^n}(k))=0$, for every $j>0$, $k\geq 1-\delta$,
and by the structural sequence we get
$h^j(\B,\O_{\B}(k))=0$, for every $j>0$, $k\geq 1-\delta$.
Hence, by $\delta>0$, it follows
\begin{eqnarray*}
P(0)&=&h^0(\B,\O_{\B})=1, \\
 P(1)&=&h^0(\B,\O_{\B}(1))=h^0(\PP^n,\O_{\PP^n}(1))=n+1, \\
P(2)&=&h^0(\PP^n,\O_{\PP^n}(2))-h^0(\PP^n,\I_{\B,\PP^n}(2))=\left({n}^{2}+n-2\right)/2. 
\end{eqnarray*}
Moreover, if $t<0$, by  Kodaira Vanishing Theorem  
 and  Serre Duality,
$$
P(t)=(-1)^r h^r(\B,\O_{\B}(t))=(-1)^r h^0(\B,\O_{\B}(-t-i)),
$$
and hence for $-i<t<0$ we have $P(t)=0$ 
and for $t\leq -i$ (hence   for every $t$) we have $P(t)=(-1)^r P(-t-i)$.
In particular
$$
P(-i)=(-1)^r,\quad P(-i-1)=(-1)^r (n+1),\quad P(-i-2)=(-1)^r \left({n}^{2}+n-2\right)/2,
$$
 and we have at least $i+5$ independent conditions for $P(t)$.

\noindent
(\ref{part: cohomology}) As the part (\ref{part: hilbert polynomial}).
\end{proof}

\begin{proposition}\label{Prop: numerical restrictions}  
\begin{enumerate}
\item\label{Part: first, numerical restrictions}   
If $d$ is even and   $\Delta=2$ , then  
$\delta\in\{0,1,3,7\}$,  $r=d\delta+d-1$, $n=(2d-1)\delta+2d$.
\item\label{Part: second, numerical restrictions} 
 If $d$,   $\Delta$ are both odd, then 
$\delta=0$, $r=\Delta+d-3$, $n=2( \Delta+d-2)$.
\end{enumerate}
\end{proposition}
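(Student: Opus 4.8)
The plan is to extract all three conclusions in each case from the dimension formula of Proposition~\ref{prop: dimension formula}, using the Divisibility Theorem and the Fano-index description of Proposition~\ref{prop: cohomology properties}(\ref{part: B is fano}) only to pin down the admissible secant defects. First I would eliminate $n$ between the two formulas of Proposition~\ref{prop: dimension formula}: multiplying the expression for $\delta$ by $d$ and subtracting it from the expression for $r$ cancels the $d\,n$ term, and after dividing by $2d-1$ (using $2d^2-7d+3=(2d-1)(d-3)$) one obtains the clean relations
\[
r=d\,\delta+\Delta+d-3,\qquad n=(2d-1)\,\delta+2(\Delta+d-2).
\]
Specializing $\Delta=2$ turns these into $r=d\delta+d-1$ and $n=(2d-1)\delta+2d$, and specializing $\delta=0$ into $r=\Delta+d-3$ and $n=2(\Delta+d-2)$. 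Hence in both parts the formulas for $r$ and $n$ follow at once from the value of $\delta$, and the whole statement reduces to determining the admissible $\delta$.

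The decisive quantity is $r-\delta=(d-1)(\delta+1)+(\Delta-2)$, read against the dichotomy of Proposition~\ref{prop: cohomology properties}(\ref{part: B is fano}): since $\B$ is a smooth $QEL$-variety, if $\delta>0$ then either $\B$ is a Fano variety of the first species of index $(r+\delta)/2$, which forces $r\equiv\delta\ \mathrm{mod}\ 2$, or $\B$ is the hyperplane section of $\PP^2\times\PP^2\subset\PP^8$, the single variety with $\delta=1$ and $r=3$. In part~(\ref{Part: second, numerical restrictions}), with $d,\Delta$ odd, the term $(d-1)(\delta+1)$ is even while $\Delta-2$ is odd, so $r-\delta$ is odd for every $\delta$; hence no $\B$ with $\delta>0$ can be Fano of the first species, and the exceptional variety cannot occur either, since $\delta=1$ would force $r=2d+\Delta-3\geq 6$, whereas that variety has $r=3$. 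This gives $\delta=0$ with no recourse to the Divisibility Theorem. In part~(\ref{Part: first, numerical restrictions}), where $\Delta=2$, the same test reads $r-\delta=(d-1)(\delta+1)$; for $\delta=2$ this equals $3(d-1)$, odd because $d$ is even, so---the exceptional variety having $\delta=1$---the integrality of the index $(r+\delta)/2$ fails and $\delta=2$ is excluded.

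It remains to treat $\delta\geq 3$ in part~(\ref{Part: first, numerical restrictions}), where I would invoke the Divisibility Theorem for the smooth $LQEL$-variety $\B$: it yields $2^{\lfloor(\delta-1)/2\rfloor}\mid r-\delta=(d-1)(\delta+1)$, and as $d$ is even the odd factor $d-1$ drops out, leaving $2^{\lfloor(\delta-1)/2\rfloor}\mid(\delta+1)$. A direct estimate shows $2^{\lfloor(\delta-1)/2\rfloor}>\delta+1$ as soon as $\delta\geq 9$, and a check of $3\leq\delta\leq 8$ leaves only $\delta=3$ and $\delta=7$. Combined with the unobstructed values $\delta\in\{0,1\}$ and the exclusion of $\delta=2$, this produces $\delta\in\{0,1,3,7\}$. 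I expect the real difficulty to be precisely the small defects $\delta=1,2$, which lie below the reach of the Divisibility Theorem; what unlocks them is the observation that the integrality of the Fano index $(r+\delta)/2$ is exactly the parity statement $r\equiv\delta\ \mathrm{mod}\ 2$, whose unique exception---the hyperplane section of $\PP^2\times\PP^2$---is disposed of on dimensional grounds.
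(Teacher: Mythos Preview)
Your proof is correct and follows essentially the same route as the paper: both arguments combine the Divisibility Theorem for $\delta\geq 3$ with the parity of $r+\delta$ forced by the Fano index $i(\B)=(r+\delta)/2$ from Proposition~\ref{prop: cohomology properties}(\ref{part: B is fano}). Your write-up is in fact slightly more careful than the paper's on two points: you make explicit the elimination yielding $r=d\delta+\Delta+d-3$ and $n=(2d-1)\delta+2(\Delta+d-2)$ before specializing, and in part~(\ref{Part: second, numerical restrictions}) you explicitly dispose of the exceptional hyperplane section of $\PP^2\times\PP^2$ on dimensional grounds, whereas the paper simply invokes the Fano contradiction without separating out that case.
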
 
\begin{proof}
(\ref{Part: first, numerical restrictions})
By \cite[Theorem~2.8]{russo-qel1},
for $\delta\geq3$,   we have
$
r\equiv \delta \ \mathrm{mod}\ 2^{\left\lfloor(\delta-1)/2\right\rfloor}
$,
i.e.
\begin{equation}\label{eq: divisibility theorem}
\Delta-2+(d-1)(\delta+1)\equiv 0 \ \mathrm{mod}\ 2^{\left\lfloor(\delta-1)/2\right\rfloor}.
\end{equation}
If $d$ is even and $\Delta=2$, then (\ref{eq: divisibility theorem}) becomes
$
\delta+1\equiv 0 \ \mathrm{mod}\ 2^{\left\lfloor(\delta-1)/2\right\rfloor}
$
and we conclude that  $\delta\in\{0,1,2,3,7\}$.
Moreover, if $\delta=2$,  
by Proposition 
\ref{prop: cohomology properties}
we would get
 the contradiction that $\B$ is a Fano variety with
\begin{equation}\label{eq: contradiction by conic-connected}
 2i(\B)=r+\delta=\Delta+\left( d+1\right) \delta+d-3 \equiv 1\ \mathrm{mod}\ 2.
\end{equation}

\noindent
(\ref{Part: second, numerical restrictions})
  If $\delta>0$,  
by Proposition 
\ref{prop: cohomology properties}
we would get
again the contradiction
 that $\B$ is a Fano variety and  holds (\ref{eq: contradiction by conic-connected}).
Thus $\delta=0$ and there follow the expressions 
 of $r$ and   $n$ by Proposition \ref{prop: dimension formula}. 
\end{proof}

\section{Examples}\label{sec: examples}
The calculations in the following examples 
can be verified using \cite{macaulay2} or \cite{sagemath}. 
\begin{example}[$\Delta=2$, $d=2$]\label{example: d=2 Delta=2}
Let $\psi:\PP^{n+1}\dashrightarrow\PP^{n+1}$ be a Cremona  transformation  of 
type $(2,2)$. 
If $H\simeq\PP^{n}\subset\PP^{n+1}$ is a general hyperplane, 
then $\Q:=\overline{\psi(H)}\subset\PP^{n+1}$ is a quadric hypersurface 
 and the restriction  
$\psi|_{H}:\PP^n\dashrightarrow\Q\subset\PP^{n+1}$ is 
a birational transformation
of type $(2,2)$, with base locus 
 $
\mathrm{Bs}(\psi|_H)=\mathrm{Bs}(\psi)\cap H$.
If $\psi$ is special, i.e. if its base locus is a Severi variety, 
then also $\psi|_H$ is special and moreover
it is possible to verify that
the quadric $\Q$ is smooth, for example by determining explicitly the equation. 
\end{example}
\begin{example}[$\Delta=2$, $d=3$, $\delta=0$]\label{example: d=3 Delta=2}
We construct the Edge variety
 $\mathcal{X}$ of 
dimension $3$ and degree $7$ (see also \cite{edge} and \cite[Example~2.4]{ciliberto-mella-russo}).
Consider the Segre variety
$S^{1,3}=\PP^1\times\PP^3\subset\PP^7$ and  choose 
coordinates $x_0,\ldots,x_7$ on $\PP^7$ such that the equations 
of $S^{1,3}$ in $\PP^7$ are given by
$$
\begin{array}{ccc}Q_0=- x_{1} x_{4} + x_{0} x_{5},&Q_1= - x_{2} x_{4} + x_{0} x_{6},&Q_2= - x_{3} x_{4} + x_{0} x_{7},\\ 
                  Q_3= - x_{2} x_{5} + x_{1} x_{6},&Q_4= - x_{3} x_{5} + x_{1} x_{7},&Q_5= - x_{3} x_{6} + x_{2} x_{7}.
\end{array}
$$
Take a general quadric 
 $V(Q)$ containing the linear space 
$P=V(x_0,x_1,x_2,x_3)\subset S^{1,3},$
i.e. $Q=\sum b_{ij}x_i x_j$, for suitable coefficients $b_{ij}\in\CC$, with 
$i\leq j$,  $0\leq i\leq 3$ and  $0\leq j \leq 7$.
The intersection $\mathcal{Y}=S^{1,3}\cap V(Q)$ is an equidimensional variety
of dimension $3$ that has $P$ as irreducible component of multiplicity $1$.
Hence it  defines a variety
 $\mathcal{X}$ of dimension $3$ and degree $7$
 such that
$\mathcal{Y}=P\cup \mathcal{X}$ and  $P\nsubseteq \mathcal{X}$.
Since we are interested in
 $\mathcal{Y}$ and not in $Q$,
we can suppose 
$
b_{14}=b_{24}=b_{34}=b_{25}=b_{35}=b_{36}=0
$.
Thus it is easy to verify that
 $\mathcal{X}$ is the scheme-theoretic intersection of  $\mathcal{Y}$ with  the quadric $V(Q')$,   where
 \begin{eqnarray*}
  Q'&=&b_{37}x_7^2+b_{27}x_6x_7+b_{17}x_5x_7+b_{07}x_4x_7+b_{33}x_3x_7  +  b_{26}x_6^2+b_{16}x_5x_6 + \\ &+& 
       b_{06}x_4x_6+b_{23}x_3x_6+b_{22}x_2x_6 +b_{15}x_5^2+b_{05}x_4x_5+b_{13}x_3x_5+b_{12}x_2x_5 + \\ &+&
       b_{11}x_1x_5  +  b_{04}x_4^2+b_{03}x_3x_4+b_{02}x_2x_4+b_{01}x_1x_4+b_{00}x_0x_4.
 \end{eqnarray*}
The rational map
$\psi:\PP^7\dashrightarrow\PP^7$,
defined by $\psi([x_0,\ldots,x_7])=[Q_0,\ldots,Q_5,Q,Q']$,
has image the 
 rank $6$ quadric
$\Q=V(y_{0} y_{5} - y_{1} y_{4} + y_{2} y_{3})$
and the closure of its general fiber is a $\PP^1\subset\PP^6$. 
Hence restricting 
$\psi$ to a general $\PP^6\subset\PP^7$, we get a birational transformation  
$
\PP^6\dashrightarrow\Q\subset\PP^7$.  
\end{example}
\begin{example}\label{example: d=3 Delta=2 continuing} Continuing with the Example
 \ref{example: d=3 Delta=2},
we set
$b_{00}=b_{07}=b_{15}=b_{16}=b_{22}=b_{33}=1$ and for all the other indices
 $b_{ij}=0$.
Substitute in the quadrics
 $Q_0,\ldots,Q_5,Q,Q'$ instead of the variable $x_7$ 
the variable $x_0$, i.e. we consider the intersection of $\mathcal{X}$ 
with the hyperplane $V(x_7-x_0)$.
Denote by 
 $X\subset\PP^6$ the scheme so obtained.
$X$ is irreducible, smooth, it is defined by the $8$ independent quadrics:
\begin{equation}\label{eq: equations of B, d=3 Delta=2}
\begin{array}{c}
x_3x_6-x_0x_2,\ x_5x_6+x_2x_6+x_5^2+2x_0x_4+x_0x_3,\ x_3x_5-x_0x_1,\ x_2x_5-x_1x_6,\\
x_3x_4-x_0^2,\ x_2x_4-x_0x_6,\ x_1x_4-x_0x_5,\ x_1x_6+x_1x_5+x_3^2+x_2^2+2x_0^2,
\end{array}
\end{equation}
and its Hilbert polynomial is
 $P_{X}(t)=(7t^2+5t+2)/2$.
The quadrics (\ref{eq: equations of B, d=3 Delta=2}) 
define a birational map 
 $\psi:\PP^6\dashrightarrow\Q\subset\PP^7$ into the quadric 
 $\Q=V(y_0y_6-y_2y_5+y_3y_4)$ and the inverse of $\psi$
 is defined by the cubics:
\begin{equation}\label{eq: inverse, d=3 Delta=2}
\begin{array}{c}
y_0y_5y_7-y_1y_4y_7+y_1y_2y_6+y_0y_1y_6+y_2y_3y_5 + y_0y_3y_5+y_2^2y_4+y_0y_2y_4, \\
-y_6y_7^2-2y_4y_6y_7-y_3y_6y_7+y_0y_3y_7-y_1y_2y_7+2y_2y_6^2+2y_0y_6^2+y_2y_3^2  +y_0y_3^2+y_2^3+y_0y_2^2, \\
-y_5y_7^2-2y_4y_5y_7-y_3y_5y_7-y_0y_1y_7+2y_2y_5y_6+2y_0y_5y_6-y_1y_2y_3-y_0y_1y_3+y_0y_2^2+y_0^2y_2, \\
-y_4y_7^2+y_0y_6y_7-y_2y_5y_7-2y_4^2y_7-y_0^2y_7+2y_2y_4y_6+2y_0y_4y_6-y_0y_2y_3-y_0^2y_3-y_1y_2^2   -y_0y_1y_2, \\
-y_5^2y_7-y_4^2y_7-y_1y_6^2-y_3y_5y_6-y_1y_5y_6-y_2y_4y_6-2y_4y_5^2-y_3y_5^2-y_2y_4y_5-2y_4^3-y_1^2y_4   -y_0^2y_4, \\
-y_1y_6y_7-y_3y_5y_7-y_2y_4y_7-y_1y_3y_6+y_0y_2y_6-2y_2y_5^2-y_3^2y_5-y_2^2y_5-2y_2y_4^2-y_1^2y_2-y_0^2y_2, \\
-y_1y_5y_7-y_0y_4y_7+y_1y_3y_6-y_0y_2y_6-2y_0y_5^2+y_3^2y_5+y_2^2y_5 -2y_0y_4^2-y_0y_1^2-y_0^3 .
\end{array}
\end{equation}
The base locus  
 $Y\subset\Q\subset\PP^7$ of $\psi^{-1}$ 
is obtained by intersecting the scheme defined by
 (\ref{eq: inverse, d=3 Delta=2}) with the quadric  $\Q$.
$Y$ is irreducible with Hilbert polynomial  
$P_{Y}(t)=(9t^4+38t^3+63t^2+58t+24)/4!$
and its singular locus has dimension  $0$.
\end{example}
\begin{example}[$\Delta=2$, $d=4$, $\delta=0$]\label{example: d=4 Delta=2} 
Using a known construction (see \cite[\Rmnum{4} Theorem~4.5]{zak-tangent}) we can 
 determine the equations of the spinorial variety  $S^{10}\subset\PP^{15}$, which are:
\begin{equation}\label{eq: equations of spinorial S10}
\begin{array}{cc} 
 x_{7} x_{8} - x_{6} x_{9} + x_{5} x_{10} - x_{3} x_{15}, &x_{1} x_{6} + x_{4} x_{7} - x_{2} x_{10} + x_{3} x_{13},\\
 - x_{4} x_{5} + x_{0} x_{6} - x_{2} x_{8} + x_{3} x_{11},& x_{9} x_{11} - x_{8} x_{12} + x_{5} x_{14} - x_{0} x_{15}, \\
 x_{10} x_{12} - x_{9} x_{13} + x_{7} x_{14} + x_{1} x_{15},& - x_{1} x_{11} - x_{4} x_{12} + x_{0} x_{13} - x_{2} x_{14},\\
 x_{10} x_{11} - x_{8} x_{13} + x_{6} x_{14} - x_{4} x_{15},& x_{1} x_{5} + x_{0} x_{7} - x_{2} x_{9} + x_{3} x_{12}, \\
 - x_{7} x_{11} + x_{6} x_{12} - x_{5} x_{13} + x_{2} x_{15}, & x_{1} x_{8} + x_{4} x_{9} - x_{0} x_{10} + x_{3} x_{14}.
\end{array}
\end{equation}
The $10$ quadrics (\ref{eq: equations of spinorial S10})  define a rational map  
 $\psi:\PP^{15}\dashrightarrow\PP^{9}$, with base locus  
 $S^{10}$ and image in $\PP^9$
the smooth quadric 
$$\Q=V({y_8}{ y_9}-{ y_6}{ y_7}-{y_0}{ y_5}+ { y_2}{ y_4}+{ y_1}{ y_3}).$$
By \cite[page~798]{ein-shepherdbarron} the closure of 
the general fiber of  $\psi$ is
 a $\PP^7\subset\PP^{15}$   and hence,
by restricting 
 $\psi$ to a general  $\PP^8\subset\PP^{15}$, 
 we get a special birational transformation 
$
\PP^8\dashrightarrow\Q 
$
(necessarily of type $(2,4)$, by Remark \ref{remark: dimension formula without hypothesis}).
\end{example}
\begin{example}[$\Delta=3$, $d=3$]\label{example: d=3 Delta=3}
Let
$u:\PP^3\stackrel{\simeq}{\dashrightarrow}Q=V(u_0\,u_4-u_1^2-u_2^2-u_3^2)\subset\PP^4$
be defined by
$u([z_0,z_1,z_2,z_3]) =[{z}_{0}^{2},{z}_{0}{z}_{1},{z}_{0}{z}_{2},{z}_{0}{z}_{3},{z}_{1}^{2}+{z}_{2}^{2}+{z}_{3}^{2}]$.
Consider the composition
$ v^0:\PP^3\stackrel{u}{\dashrightarrow}\PP^4\stackrel{\nu_2}{\longrightarrow}\PP^{14}\dashrightarrow\PP^{13} $,
where $\nu_2$ is the Veronese map (with lexicographic order) 
and the last map is the projection onto the hyperplane
 $V(v_4)\simeq\PP^{13}\subset\PP^{14}$. 
The map $v^0$ parameterizes a nondegenerate variety in $\PP^{13}$, of degree 
 $16$ and isomorphic to the quadric $Q$. 
Take the point  $p_1=[1,0,0,0]\in\PP^3$ and  consider the composition
$v^1:\PP^3\stackrel{v^0}{\dashrightarrow}\PP^{13}\stackrel{\pi_1}{\dashrightarrow}\PP^{12} 
$,
where $\pi_1$ is the projection from the point $v^0(p_1)$ (precisely,
 if $j$ is  the index of the last nonzero coordinate
of $v^0(p_1)$, 
 we exchange the coordinates 
 $v_j$,   $v_{13}$ and 
 we project onto the hyperplane $V(v_{13})$ from the point $v^0(p_1)$).
Repeat the construction with the points $p_2=[0,0,0,1]$, $p_3=[1,0,0,1]$, $p_4=[0,1,0,1]$, $p_5=[0,0,1,1]$, 
obtaining the maps  
$v^2:\PP^3\dashrightarrow\PP^{11}$, $v^3:\PP^3\dashrightarrow\PP^{10}$, $v^4:\PP^3\dashrightarrow\PP^{9}$, $v^5:\PP^3\dashrightarrow\PP^{8}$.
       The map $v^5$ is given by
       \begin{eqnarray*}
       v^5([z_0,z_1,z_2,z_3])&=&[z_0\,z_3^3-z_0^2\,z_3^2+z_0\,z_2^2\,z_3+z_0\,z_1^2\,z_3, \   
                              -z_0\,z_1\,z_3^2-z_0\,z_1\,z_2^2-z_0\,z_1^3+z_0^3\,z_1, \\
                             && -z_0\,z_2\,z_3^2-z_0\,z_2^3-z_0\,z_1^2\,z_2+z_0^3\,z_2, \  
                              z_0^3\,z_3-z_0^2\,z_3^2, \\
                             && -z_0\,z_1\,z_3^2-z_0\,z_1\,z_2^2-z_0\,z_1^3+z_0^2\,z_1^2, \
                              z_0^2\,z_1\,z_2, \  z_0^2\,z_1\,z_3, \  z_0^2\,z_2\,z_3, \\   
                             && -z_0\,z_2\,z_3^2-z_0\,z_2^3+z_0^2\,z_2^2-z_0\,z_1^2\,z_2]
       \end{eqnarray*}
       and parameterizes the smooth variety $X\subset\PP^8$ defined by the $10$ independent quadrics:
       \begin{equation}\label{eq: equations of B, d=3 Delta=3}
       \begin{array}{c}
       x_5\,x_8-x_4\,x_8+x_1\,x_8-x_4\,x_7+x_5\,x_6+x_2\,x_6+x_5^2+x_4\,x_5-x_3\,x_5-x_2\,x_5,  \\
       x_5\,x_8-x_4\,x_8-x_4\,x_7+x_5\,x_6+x_2\,x_6+x_5^2+x_4\,x_5-x_3\,x_5-x_1\,x_5+x_2\,x_4,\\
       -x_7\,x_8-x_0\,x_8-x_7^2+x_3\,x_7-x_5\,x_6+x_0\,x_2,\\
        x_6\,x_8-x_4\,x_7+x_6^2+x_5\,x_6+x_4\,x_6-x_1\,x_6-x_0\,x_6+x_3\,x_4,\\
        x_6\,x_7+x_4\,x_7-x_5\,x_6-x_2\,x_6+x_3\,x_5,\\
        x_7\,x_8+x_3\,x_8+x_7^2-x_2\,x_7-x_0\,x_7+x_5\,x_6,\\
        x_1\,x_7-x_2\,x_6,\\
        x_5\,x_7+x_6^2+x_4\,x_6-x_1\,x_6-x_0\,x_6+x_3\,x_4,\\
       x_2\,x_6-x_3\,x_5+x_0\,x_5,\\
       x_3\,x_6-x_1\,x_6-x_0\,x_6+x_3\,x_4-x_0\,x_4+x_0\,x_1. 
       \end{array}
       \end{equation}
The quadrics (\ref{eq: equations of B, d=3 Delta=3}) 
define a special birational 
transformation 
 $\psi:\PP^8\dashrightarrow\sS\subset\PP^9$ of type $(2,3)$ 
   into the cubic $\sS\subset\PP^9$
defined by
\begin{equation}
\begin{array}{l}
 - y_7\,y_8\,y_9 + y_6\,y_8\,y_9 + y_3\,y_8\,y_9 + y_7^2\,y_9 - y_6\,y_7\,y_9 - 2\,y_3\,y_7\,y_9 - y_2\,y_7\,y_9 +\\
+ y_0\,y_7\,y_9 + y_4\,y_6\,y_9 + y_3\,y_6\,y_9 - y_4\,y_5\,y_9 - y_1\,y_5\,y_9 - y_4^2\,y_9 - y_0\,y_4\,y_9  +\\
+ y_3^2\,y_9 + y_2\,y_3\,y_9 - y_0\,y_3\,y_9 + y_7\,y_8^2 - y_3\,y_8^2 - y_6\,y_7\,y_8 + 2\,y_4\,y_7\,y_8  +\\
+ y_3\,y_7\,y_8 - y_2\,y_7\,y_8 - y_4\,y_6\,y_8 + y_2\,y_6\,y_8 - y_0\,y_6\,y_8 + y_4\,y_5\,y_8 + y_0\,y_5\,y_8 +\\
- 3\,y_3\,y_4\,y_8 - y_1\,y_4\,y_8 + y_0\,y_4\,y_8 - y_3^2\,y_8 + y_2\,y_3\,y_8 - y_1\,y_3\,y_8 + y_5\,y_7^2  +\\
+ y_2\,y_7^2 + y_1\,y_7^2 - y_0\,y_7^2 + y_6^2\,y_7 + y_3\,y_6\,y_7 + y_0\,y_6\,y_7 - y_3\,y_5\,y_7 +\\
+ y_1\,y_5\,y_7 - y_2\,y_3\,y_7 - y_1\,y_3\,y_7 + y_0\,y_3\,y_7 + y_0\,y_2\,y_7 - y_4\,y_6^2 - y_3\,y_6^2 +\\
- y_3\,y_5\,y_6 - y_3\,y_4\,y_6 - y_0\,y_4\,y_6 - y_3^2\,y_6 - y_0\,y_3\,y_6 - y_1\,y_2\,y_6 + y_4^2\,y_5 +\\
+ y_3\,y_4\,y_5 + y_0\,y_4\,y_5 + y_2\,y_4^2 - y_1\,y_4^2 + y_0\,y_4^2 + y_2\,y_3\,y_4 - y_1\,y_3\,y_4 +\\
+ y_0\,y_3\,y_4 + y_1\,y_2\,y_4 .
\end{array}
\end{equation}
The singular locus of $\sS$ has dimension $3$, 
from which it follows the factoriality of $\sS$ 
(see  \cite[\Rmnum{11} Corollaire~3.14]{sga2}).
\end{example}
\begin{example}[$\Delta=4$, $d=2$, $\delta=0$]\label{example: B2=singSred-3fold} 
This is an example for which (\ref{eq: hypothesis on sing locus hypersurface}) is not satisfied;
essentially, it gives an example for case (\ref{part: case 0.5, 3-fold}) of 
Proposition \ref{prop: 3-fold in P8 - S nonlinear}.
Consider the irreducible
smooth $3$-fold $X\subset\PP^8$ defined as the intersection of
 $\PP^2\times\PP^2\subset\PP^8$, which is defined by
\begin{equation}\label{eq: P2xP2inP8}
\begin{array}{c}
 x_5x_7-x_4x_8,\ 
x_2x_7-x_1x_8,\ 
x_5x_6-x_3x_8,\  
x_4x_6-x_3x_7,\ 
x_2x_6-x_0x_8,\\ 
x_1x_6-x_0x_7,\ 
x_2x_4-x_1x_5,\ 
x_2x_3-x_0x_5,\ 
x_1x_3-x_0x_4,
\end{array}
\end{equation}
 with the quadric hypersurface defined by:
 \begin{equation}
2x_0^2+3x_1^2+5x_2^2+x_3^2+x_4^2+x_5^2+5x_6^2+3x_7^2+2x_8^2.
 \end{equation}
Thus $X$ is defined by $10$ quadrics and we can consider the associated 
rational map $\psi:\PP^8\dashrightarrow\sS=\overline{\psi(\PP^8)}\subset\PP^9$.
We have that $\sS$ is the quartic hypersurface defined by:
 \begin{equation}
\begin{array}{l}
  2y_1^2y_2^2+3y_1^2y_3^2-4y_0y_1y_2y_4+2y_0^2y_4^2+5y_3^2y_4^2-6y_0y_1y_3y_5-10y_2y_3y_4y_5+3y_0^2y_5^2 \\ +5y_2^2y_5^2+y_2^2y_6^2+y_3^2y_6^2+5y_4^2y_6^2+3y_5^2y_6^2-2y_0y_2y_6y_7-10y_1y_4y_6y_7+y_0^2y_7^2 \\ +5y_1^2y_7^2+y_3^2y_7^2+2y_5^2y_7^2-2y_0y_3y_6y_8-6y_1y_5y_6y_8-2y_2y_3y_7y_8-4y_4y_5y_7y_8+y_0^2y_8^2 \\ +3y_1^2y_8^2+y_2^2y_8^2+2y_4^2y_8^2-y_3y_4y_6y_9+y_2y_5y_6y_9+y_1y_3y_7y_9-y_0y_5y_7y_9-y_1y_2y_8y_9 \\ +y_0y_4y_8y_9,  
\end{array}
 \end{equation}
and $\psi$ is birational with inverse defined by:
\begin{equation}
\begin{array}{c}
 y_5y_7-y_4y_8,\ 
y_5y_6-y_1y_8,\  
y_4y_6-y_1y_7,\  
y_3y_7-y_2y_8,\  
y_3y_6-y_0y_8, \\
y_2y_6-y_0y_7,\  
y_3y_4-y_2y_5,\  
y_1y_3-y_0y_5,\  
y_1y_2-y_0y_4.
\end{array}
\end{equation}
Moreover $X$ is a Mukai variety of degree $12$, sectional genus $7$ and Betti numbers
$b_2=2$, $b_3=18$. 
The secant variety $\Sec(X)$ 
is a cubic hypersurface and for the general point $p\in\Sec(X)$
there are two secant lines of $X$ through $p$.
Finally, denoting by $Y\subset\sS$ the base locus of $\psi^{-1}$, we have 
$\dim(Y)=5$, $\dim(\sing(Y))=0$ and
$
Y=(Y)_{\mathrm{red}}=(\sing(\sS))_{\mathrm{red}}
$.
\end{example}
\section{Transformations of type \texorpdfstring{$(2,2)$}{(2,2)} into a quadric}\label{sec: type 2-2 into quadric} 
\begin{theorem}\label{prop: classification of type 2-2 into quadric}
If $\Delta=2$, $\varphi:\PP^n\dashrightarrow \Q:=\sS\subset\PP^{n+1}$
is of type $(2,2)$ and $\Q$ is smooth,
then 
$\B$ is a hyperplane section of a Severi variety.
\end{theorem}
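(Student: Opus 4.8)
The plan is to reverse the construction of Example \ref{example: d=2 Delta=2}: rather than studying $\B$ directly, I would identify the base locus $\B'\subset\Q\subset\PP^{n+1}$ of $\varphi^{-1}$ as a \emph{Severi variety}, and then recover $\B$ as a hyperplane section of a Severi variety by exhibiting $\varphi$ as the restriction of a quadro-quadric Cremona transformation of $\PP^{n+1}$. First I would apply Proposition \ref{Prop: numerical restrictions}(\ref{Part: first, numerical restrictions}): since $d=2$ is even and $\Delta=2$, it gives $\delta=\delta(\B)\in\{0,1,3,7\}$ together with $r=2\delta+1$ and $n=3\delta+4$. By Proposition \ref{prop: dimension formula}(2), $\B'$ is irreducible and generically reduced of dimension $r'=2\delta+2$. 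The decisive numerical coincidence is that $\dim(\PP^{n+1})=n+1=3\delta+5=(3r'+4)/2$; that is, $\B'\subset\PP^{n+1}$ sits in exactly the extremal Severi relation, and for each of the four values of $\delta$ the pair $(r',n+1)$ matches precisely one line of Table \ref{tab: severi varieties}.

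The heart of the argument is to prove that $\B'$ is a Severi variety, and the hard part will be its smoothness. Because $\Q$ is smooth we have $\reg(\Q)=\Q$, so the resolution $\pi':X\to\Q$ of Proposition \ref{prop: dimension formula} is a birational morphism of \emph{smooth} varieties contracting the single irreducible divisor $E'$ onto $\B'$, which over $U=\Q\setminus\sing((\B')_{\mathrm{red}})$ is already the blow-up along the smooth center $\reg((\B')_{\mathrm{red}})$. I would upgrade this to a global statement by controlling the fibres of $\pi'$: each fibre over a point of $\B'$ is the proper transform of a cone $L_p(\B)$ over an entry locus and should therefore have dimension exactly $\delta+1=\mathrm{codim}_\Q(\B')-1$; equidimensionality of $\pi'|_{E'}$ then forces $\pi'$ to be globally the blow-up of the smooth quadric $\Q$ along $\B'$ by \cite[Theorem~1.1]{ein-shepherdbarron}, whence $\B'$ is smooth, and irreducible hence connected. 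The main obstacle is precisely this fibre-dimension control over the possible locus $\sing((\B')_{\mathrm{red}})$, i.e. ruling out that some contracted cone jumps in dimension; this is exactly where the smoothness of $\Q$, as opposed to the merely factorial $\sS$ of the general theory, becomes indispensable, since it is what makes the target of $\pi'$ smooth. Granting smoothness, $\B'$ is a smooth nondegenerate variety with $\Sec(\B')\subsetneq\PP^{n+1}$, and since $n+1=(3r'+4)/2$ holds with equality, Zak's bound \cite[\Rmnum{2} Corollary~2.11]{zak-tangent} together with Table \ref{tab: severi varieties} identifies $\B'$ as one of the four Severi varieties.

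It then remains to transfer the conclusion back to $\B$. Once $\B'$ is a Severi variety, the $h^0(\PP^{n+1},\I_{\B'}(2))=n+2$ quadrics through it (the values $6,9,15,27$ of Table \ref{tab: severi varieties}) define, by \cite{ein-shepherdbarron}, a quadro-quadric Cremona transformation $\chi:\PP^{n+1}\dashrightarrow\PP^{n+1}$ with $\mathrm{Bs}(\chi)=\B'$. The equation $Q$ of $\Q$ lies in $H^0(\PP^{n+1},\I_{\B'}(2))$, and from $0\to\O_{\PP^{n+1}}\to\I_{\B'}(2)\to\I_{\B'/\Q}(2)\to0$ one gets $h^0(\Q,\I_{\B'/\Q}(2))=n+1$; hence restricting $\chi$ to $\Q=V(Q)$ annihilates the coordinate $Q$ and realizes $\varphi^{-1}$ as the complete system $|\I_{\B'/\Q}(2)|$, so that $\varphi^{-1}=\chi|_\Q$. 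Consequently $\varphi=\chi^{-1}|_{H'}$, where $H'=\chi(\Q)\simeq\PP^n$ is a hyperplane, and therefore $\B=\mathrm{Bs}(\varphi)=\mathrm{Bs}(\chi^{-1})\cap H'$ is a hyperplane section of the Severi variety $\mathrm{Bs}(\chi^{-1})$, as claimed. (Alternatively, one could bypass $\B'$ altogether: since $\B$ is an $LQEL$-variety of type $\delta>0$ it is conic-connected with $\Sec(\B)\subsetneq\PP^n$, so the classification \cite[Theorem~2.2]{ionescu-russo-conicconnected} — already exploited in Proposition \ref{prop: cohomology properties}(\ref{part: B is fano}) — combined with the determined invariants pins $\B$ down directly; but the route through $\B'$ is what makes the appearance of hyperplane sections transparent.)
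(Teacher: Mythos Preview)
Your plan is conceptually attractive—exploit the numerical coincidence that $(r',n+1)$ matches the Severi data and prove $\B'$ is a Severi variety directly—but it has a genuine gap exactly where you flag it. You assert that each fibre of $\pi'$ over a point of $\B'$ has dimension $\delta+1$ and that equidimensionality then forces $\pi'$ to be globally a blow-up of a smooth centre via \cite[Theorem~1.1]{ein-shepherdbarron}. Two problems. First, that theorem takes a \emph{smooth} centre as hypothesis and concludes that the morphism is the blow-up; it does not take equidimensional fibres and return smoothness of the centre. Second, and more seriously, you give no argument for the fibre bound itself. The observation that smoothness of $\Q$ ``makes the target of $\pi'$ smooth'' is true but does not help: a proper birational morphism between smooth varieties can have exceptional fibres of jumping dimension. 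Your identification of the generic fibre with (the proper transform of) a secant cone $L_p(\B)$ is correct, but gives no control over fibres above $\sing((\B')_{\mathrm{red}})$, which is precisely the locus in question.

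The paper's proof takes a different route. For $\delta\in\{0,1,3\}$ it classifies $\B$ directly—quartic curve in $\PP^4$; hyperplane section of $\PP^2\times\PP^2$ after ruling out the Del~Pezzo alternative via \cite{fujita-polarizedvarieties}; Mukai variety of genus $8$ via \cite{mukai-biregularclassification}—so your dismissed ``alternative'' is what actually happens here. Only for $\delta=7$, where no off-the-shelf classification of $\B$ is available, does the paper turn to $Y=(\B')_{\mathrm{red}}$, and even then not through fibre dimensions: it shows $Y$ is a $QEL$-variety, proves $\widetilde{\gamma}(Y)=0$ (this is where smoothness of $\Q$ is actually used, via a vertex-of-entry-locus contradiction, cf.\ the footnote in that step), and then deduces smoothness of $Y$ from an analysis of the projection $\pi_{L_q}$ from the span of a general entry locus. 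Your endgame—lifting $\varphi^{-1}$ to a quadro-quadric Cremona $\chi$ of $\PP^{n+1}$ and reading $\B$ as a hyperplane section of $\mathrm{Bs}(\chi^{-1})$—is correct and matches the paper's conclusion, but the smoothness of $\B'$ that feeds it is the hard content, and your proposal does not supply it.
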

\begin{proof}
By Proposition \ref{Prop: numerical restrictions} part \ref{Part: first, numerical restrictions} we get
 $\delta\in\{0,1,3,7\}$.
If $\delta=0$,
we have $r=1$,   $n=4$  and the 
thesis follows from Proposition \ref{prop: P4}. 
Alternatively, we can apply 
 Lemma \ref{prop: cohomology twisted ideal} 
to  determine the Hilbert polynomial of the curve  $\B$.
If $\delta=1$,
we have $r=3$,   $n=7$ and,
by  Proposition \ref{prop: cohomology properties} 
part \ref{part: B is fano},
 $\B$ is a hyperplane section of the Segre variety 
 $\PP^2\times\PP^2\subset\PP^8$,  or 
it is a Fano variety of the first species of     
index $i(\B)=r-1$.
The latter case cannot occur
by the classification of Del Pezzo varieties
in \cite[\Rmnum{1} \S  8]{fujita-polarizedvarieties}. 
If $\delta=3$,
we have $r=7$,   $n=13$  
and $\B$ is a Mukai variety of the first species.
   By Proposition
 \ref{prop: cohomology properties} part \ref{part: hilbert polynomial}, 
we can 
determine the Hilbert polynomial  of $\B$ and in particular  
to get that the sectional genus is $g=8$.     
Hence, applying
 the classification of Mukai varieties in \cite{mukai-biregularclassification},
we get that
$\B$ is 
a hyperplane section 
 of the Grassmannian $\GG(1,5)\subset\PP^{14}$.
Now, suppose
 $\delta=7$ and hence 
$r=15$, $n=25$, $r'=16$. 
Keep the notation as
 in the proof of Proposition \ref{prop: dimension formula} and  put
$Y=(\B')_{\mathrm{red}}$. We shall show that $\B\subset\PP^{25}$ 
is a hyperplane section of $E_6\subset\PP^{26}$ in several steps.
\begin{step}
$\pi'(E)=\left(\Sec(\B')\cap\Q\right)_{\mathrm{red}}=\left(\Sec(Y)\cap\Q\right)_{\mathrm{red}}$ and for the general point $p\in\pi'(E)$ 
 the entry locus $\Sigma_p(\B')$ is a quadric of dimension $8$.
\end{step}
\begin{proof}[of the Claim]
See also \cite[Proposition~2.3]{ein-shepherdbarron}.
If $x\in\B$ is a point, then $\pi'(\pi^{-1}(x))\simeq\PP^9$ and by the relations (\ref{eq: EE'}) 
we get that
$\pi'(\pi^{-1}(x))\cap\B'$  is a quadric hypersurface in $\PP^9$. 
Then, for every point $p\in \pi'(\pi^{-1}(x))\setminus \B'$, every line contained
in $\pi'(\pi^{-1}(x))$ and through  
 $p$ is a secant line of $\B'$  and therefore
 it is contained in $L_p(\B')$. Thus 
$$\pi'(\pi^{-1}(x))\subseteq L_p(\B')\subseteq \Sec(\B')\cap\Q$$
and varying $x\in\B$ we get
\begin{equation}\label{eq: first inclusion, type 2-2}
\pi'(E)=\bigcup_{x\in \B} \pi'(\pi^{-1}(x))\subseteq \Sec(\B')\cap \Q.
\end{equation}
Conversely,
if
 $p\in\Sec(\B')\cap\Q\setminus\B'$, 
then every secant line  
 $l$ of $\B'$ through  $p$ is contained 
 in $\Q$ and $\varphi^{-1}(l)=\varphi^{-1}(p)=:x$. 
Hence the strict transform $\widetilde{l}=\overline{\pi'^{-1}(l\setminus\B')}$ 
is contained
in $\pi^{-1}(x)\subseteq E$ 
 and then $l$ is contained in $\pi'(\pi^{-1}(x))\subseteq\pi'(E)$.
Thus 
$$L_p(\B')\subseteq \pi'(\pi^{-1}(x))\subseteq\pi'(E)$$
 and varying $p$ 
(since $p$ lies on at least a secant line of $\B'$)
we get 
\begin{equation}\label{eq: second inclusion, type 2-2}
\left(\Sec(\B')\cap\Q\right)_{\mathrm{red}} = \overline{\left(\Sec(\B')\cap\Q\setminus\B'\right)_{\mathrm{red}}}\subseteq \pi'(E).
\end{equation}
The conclusion follows from (\ref{eq: first inclusion, type 2-2}) and (\ref{eq: second inclusion, type 2-2}) and 
by observing that, since $\B'$ is irreducible and generically reduced, we have
 $(\Sec(\B'))_{\mathrm{red}}=\Sec(Y)$.
\end{proof} 
\begin{step}\label{step: tang proj and entry locus}
For the general point $x\in\B$, 
$\overline{\tau_{x,\B}(\B)}=W_{x,\B}\simeq \Sigma_{p}(\B')$, 
with $p$ general point in $\pi'(\pi^{-1}(x))$.
\end{step}
\begin{proof}[of the Claim]
See also \cite[Theorem~1.4]{mella-russo-baselocusleq3}.
The projective space $\PP^9\subset\PP^{25}$ skew to $T_x(\B)$ (i.e.
the codomain of $\tau_{x,\B}$) identifies with
 $\pi^{-1}(x)=\PP({\mathcal{N}}_{x,\B}^{\ast})$.
For $z\in\B\setminus T_x(\B)$, the line 
$l=\langle z,x \rangle$ 
corresponds to a point $w\in \pi^{-1}(x)=\PP({\mathcal{N}}_{x,\B}^{\ast})$.
Moreover $\pi'(w)=\varphi(l)\in \B'$ and 
 $\tau_{x,\B}(z)=\tau_{x,\B}(l)=w$, so that 
$\pi'(w)\in \pi'(\pi^{-1}(x))\cap \B'$
and hence $W_{x,\B}$ identifies with an irreducible component of
 $\pi'(\pi^{-1}(x))\cap \B'$.
Since $W_{x,\B}$ is a nondegenerate hypersurface  
in $\PP({\mathcal{N}}_{x,\B}^{\ast})$ (see \cite[Proposition~1.3.8]{russo-specialvarieties}), 
we have 
\begin{equation}
W_{x,\B}\simeq\pi'(\pi^{-1}(x))\cap \B'\subset\pi'(\pi^{-1}(x))\simeq\PP^9.
\end{equation}
\end{proof} 
\begin{step}\label{step: smooth image tangential projection}
For the general point $x\in\B$, $W_{x,\B}\subset\PP^9$ is 
a smooth quadric hypersurface.
\end{step}
\begin{proof}[of the Claim]
By \cite[Theorem~2.3]{russo-qel1}
it follows that  $\L_{x,\B}\subset\PP^{14}$ is a smooth 
$QEL$-variety of dimension $\dim(\L_{x,\B})=9$ and of
type $\delta(\L_{x,\B})=5$.
So, applying \cite[Corollary~3.1]{russo-qel1}, we get that
$\L_{x,\B}\subset\PP^{14}$ is projectively equivalent to a hyperplane section 
 of the spinorial variety $S^{10}$.
 Hence,   by \cite[Proposition~2.3.2]{russo-specialvarieties}
and Example \ref{example: d=4 Delta=2},
it follows that  $W_{x,\B}\subset\PP^9$ is
a smooth quadric  of dimension $r-\delta=8$. 
\end{proof} 
Summing up the previous claims, we obtain that for the general point
$p\in\pi'(E)\setminus Y$ the entry locus $\Sigma_p(\B')$ is a smooth quadric;
moreover,   one can be easily convinced that we also have
$\Sigma_{p}(\B')=L_p(\B')\cap \B'=L_p(\B')\cap Y=L_p(Y)\cap Y = \Sigma_p(Y)$.
Now, by the proof of  Proposition \ref{prop: dimension formula}, 
it follows that $\pi'(E)$ is a reduced and irreducible divisor 
in $|\O_{\Q}(3)|$ and,
since $\pi'(E)=\left(\Sec(Y)\cap \Q\right)_{\mathrm{red}}$, 
we have that $\Sec(Y)$ is a hypersurface of degree a multiple of $3$. In particular
$Y\subset\PP^{26}$ is an irreducible nondegenerate variety
 with secant defect $\delta(Y)=8$.
Note also that $Y$ is different from a cone: if
 $z_0$ is a vertex of $Y$,
since $I_{\B',\PP^{26}}\subseteq I_{Y,\PP^{26}}$, 
we have that $z_0$ is a vertex of every quadric defining
 $\B'$, in particular $z_0$ is a vertex of $\B'$ and $\Q$;
then, for a general point $z\in\Q$, we have 
$\langle z, z_0 \rangle\subseteq\overline{(\varphi^{-1})^{-1}(\varphi^{-1}(z))}$, 
against the birationality  of $\varphi^{-1}$. 
\begin{step}\label{step: Y is QEL}
$Y$ is a $QEL$-variety. 
\end{step}
\begin{proof}[of the Claim]
Consider the rational map
$\psi:\Sec(Y)\dashrightarrow \PP^{26}$, 
defined by the linear system
 $|\I_{\B',\PP^{26}}(2)|$ restricted to $\Sec(Y)$. 
Since $\overline{\psi(\Sec(Y)\cap \Q)}=\overline{\varphi^{-1}(\pi'(E))}=\B$ and 
the image of $\psi$ is 
of course nondegenerate, it  follows that the dimension
of the image of
 $\psi$ is at least $16$ and hence that the dimension 
of its general fiber is at most $9$. 
Now, if $q\in \Sec(Y)\setminus Y$ is a general point, denote by
 $\widetilde{\psi^{-1}(\psi(q))}$ the irreducible component
 of $\overline{\psi^{-1}(\psi(q))}$ through
  $q$ and by $\widetilde{\Sigma_q(Y)}$ an any irreducible component
 of $\Sigma_q(Y)$; note
 that, from generic smoothness \cite[\Rmnum{3} Corollary~10.7]{hartshorne-ag}, it follows 
that $\overline{\psi^{-1}(\psi(q))}$ 
(and at fortiori $\widetilde{\psi^{-1}(\psi(q))}$) is smooth in its general point  $q$.  
We have $S(q,\widetilde{\Sigma_q(Y)})\subseteq \widetilde{\psi^{-1}(\psi(q))}$, and since 
$\dim(\widetilde{\Sigma_q(Y)})=8$, 
it follows $S(q,\widetilde{\Sigma_q(Y)})= \widetilde{\psi^{-1}(\psi(q))}$. 
Thus the cone $S(q,\widetilde{\Sigma_q(Y)})$ is smooth in its vertex and  
necessarily it follows  
$S(q,\widetilde{\Sigma_q(Y)})=\PP^9$ and $\langle \Sigma_q(Y) \rangle=\PP^9$.
Finally, by  Trisecant Lemma \cite[Proposition~1.3.3]{russo-specialvarieties}, it
follows that $\Sigma_q(Y)\subset\PP^9$ is a quadric hypersurface.
\end{proof} 
\begin{step}\label{step: gammaY=deltaY} 
$\widetilde{\gamma}(Y)=0$.
\end{step}  
\begin{proof}[of the Claim]
We first show that 
for the general point $q\in\Sec(Y)\setminus Y$ 
the entry locus $\Sigma_q(Y)$ is smooth,  by discussing two cases:
\begin{case}[Suppose $\pi'(E)\nsubseteq \sing(\Sec(Y))$]
Denote by
 $\Hilb(Y)$ the Hilbert scheme of  $8$-dimensional quadrics contained in $Y$
and by $V$ a nonempty open set of $\Sec(Y)\setminus Y$ such that for every  
$q\in V$ we have $\Sigma_q(Y)\in \Hilb(Y)$. 
If $\varrho:Y\times \left(Y\times\PP^{26}\right)\longrightarrow Y\times\PP^{26}$ is the projection, 
at the closed subscheme of $Y\times V$,
\begin{displaymath}
 \varrho\left(\overline{\{(w,z,q)\in Y\times Y\times \PP^{26}: 
w\neq z\mbox{ and } q\in\langle w,z\rangle\}}\right)\cap Y\times V    
=  \{ (z,q)\in Y\times V : z\in \Sigma_q(Y) \},
\end{displaymath}
corresponds a rational map 
$\nu:\Sec(Y)\dashrightarrow \Hilb(Y)$ which sends the point
 $q\in V$ to the quadric $\nu(q)=\Sigma_q(Y)$;
denote by  
$\mathrm{Dom}(\nu)$ the largest open set  of $\Sec(Y)$ where
 $\nu$ can be defined.
By assumption we have
 $D':=\pi'(E)\cap\reg(\Sec(Y))\neq\emptyset$.
It follows that the rational map 
$\nu':=\nu|_{\reg(\Sec(Y))}:\reg(\Sec(Y))\dashrightarrow \Hilb(Y)$,
having indeterminacy locus of codimension $\geq 2$, is defined in the general point 
 of $D'$, i.e.
\begin{equation}\label{eq: Dom intersects divisor}
\emptyset\neq\mathrm{Dom}(\nu')\cap D'=\mathrm{Dom}(\nu)\cap\pi'(E)\cap \reg(\Sec(Y))\subseteq \mathrm{Dom}(\nu)\cap\pi'(E).
\end{equation}
Now, consider the natural map
$\rho:\Hilb(Y)\longrightarrow\GG(9,26)$, defined by $\rho(Q)=\langle Q \rangle$
and  the closed subset 
$C:=\{ (q,L)\in \PP^{26}\times \GG(9,26): q\in L  \}$ 
of $\PP^{26}\times \GG(9,26)$.
We have
\begin{eqnarray*}
&& \left(\left(\mathrm{Id}_{\PP^{26}}\times\rho\right)^{-1}(C)\cap \mathrm{Dom}(\nu)\times \Hilb(Y)\right)\bigcap \Graph(\nu:\mathrm{Dom}(\nu)\to\Hilb(Y)) \\
&=&\left\{(q,Q)\in \mathrm{Dom}(\nu)\times \Hilb(Y): Q=\nu(q)\mbox{ and }q\in\langle Q\rangle \right\} \\
&\simeq& \left\{ q\in \mathrm{Dom}(\nu): q\in \langle \nu(q)\rangle\right\}=:T,
\end{eqnarray*}
from which it follows that the set  $T$ 
is closed in $\mathrm{Dom}(\nu)$ and,
since $\mathrm{Dom}(\nu)\supseteq T\supseteq V$, we have 
\begin{equation}\label{eq: property of Dom}
\mathrm{Dom}(\nu)=\overline{T}=T=\left\{ q\in \mathrm{Dom}(\nu): q\in \langle \nu(q)\rangle\right\}.
\end{equation}
By (\ref{eq: Dom intersects divisor}) and (\ref{eq: property of Dom}) 
it follows that, for the general point $p\in\pi'(E)\setminus Y$, we have $p\in\langle\nu(p)\rangle$, 
hence $\langle\nu(p)\rangle\subseteq L_p(Y)$  and then $\nu(p)=\Sigma_p(Y)$.
Thus $\nu(\mathrm{Dom}(\nu))$ intersects the open  $U(Y)$ of 
$\Hilb(Y)$ consisting of the smooth $8$-dimensional quadrics 
 contained in $Y$ and then,  for a general point 
 $q$ in the nonempty open set $\nu|_{\mathrm{Dom}(\nu)}^{-1}(U(Y))$, 
the entry locus $\Sigma_q(Y)$ is a smooth $8$-dimensional quadric.
\end{case}
\begin{case}[Suppose $\pi'(E)\subseteq \sing(\Sec(Y))$]\footnote{Note that 
this is the only place in the proof where we use the smoothness of $\Q$.
Note also that 
this case does not arise if one first shows that $\deg(\Sec(Y))=3$.
Indeed,  let $\mathcal{C}\subset\PP^N$ be a cubic hypersurface
 and $D\subset \mathcal{C}$ an irreducible divisor contained in $\sing(\mathcal{C})$.
Then, for a general plane $\PP^2\subset\PP^N$, putting $C=\mathcal{C}\cap \PP^2$ and $\Lambda=D\cap \PP^2$,
we have $\Lambda\subseteq \sing(C)$. Hence, by   B\'ezout's Theorem, being $C$ 
a plane cubic curve, it follows
 $\deg(D)=\#(\Lambda)=1$.} Let $z\in Y$ be a general point,
 $\tau_{z,Y}:Y\dashrightarrow W_{z,Y}\subset \PP^{9}$ 
the tangential projection (note that $W_{z,Y}$ is a nonlinear hypersurface, by
\cite[Proposition~1.3.8]{russo-specialvarieties}) and 
let $q\in \Sec(Y)\setminus Y$ be a general point. 
Arguing as in \cite[Claim~8.8]{chiantini-ciliberto} we obtain that 
$T_z(Y)\cap L_q(Y)=T_z(Y)\cap \langle \Sigma_q(Y) \rangle=\emptyset$, so we deduce 
that $\tau_{z,Y}$ isomorphically maps $\Sigma_q(Y)$ to $W_{z,Y}$. 
From this and Terracini Lemma it follows that 
\begin{equation}\label{eq: identity of joints}
\Sec(Y)=S(\Sigma_q(Y),Y):=\overline{\bigcup_{(z_1,z_2)\in \Sigma_q(Y)\times Y \atop  z_1\neq z_2} \langle z_1, z_2\rangle} .
\end{equation}
Now suppose by contradiction that there exists $w\in \mathrm{Vert}(\Sigma_q(Y))$.
By (\ref{eq: identity of joints}) it follows that $w\in \mathrm{Vert}(\Sec(Y))$ and 
by assumption we obtain $\pi'(E)\subseteq S(w,\pi'(E))\subseteq\sing(\Sec(Y))$.
Thus $w\in\mathrm{Vert}(\pi'(E))$ and hence $T_w(\pi'(E))=\PP^{26}$. This yields that
$\Q$ is singular in $w$, a contradiction.
\end{case}
Finally, since for general points $z\in Y$ and $q\in\Sec(Y)\setminus Y$, we have 
$W_{z,Y}=\overline{\tau_z(Y)}\simeq\Sigma_q(Y)$ and $\Sigma_q(Y)$ is a smooth quadric, 
we deduce that $W_{z,Y}$ is smooth.
It follows that the Gauss map 
$G_{W_{z,Y}}:W_{z,Y}\dashrightarrow\GG(8, 9)=\left(\PP^{9}\right)^{\ast}$ 
is birational onto its image (see \cite{zak-tangent}) and hence the dimension 
of its general fiber is $\widetilde{\gamma}(Y)=\gamma(Y)-\delta(Y)=0$.
\end{proof}
Now, consider two  general points $z_1,z_2\in Y$ ($z_1\neq z_2$),
a general point 
 $q\in\langle z_1, z_2\rangle$ and put
 $\Sigma_q=\Sigma_q(Y)$, $L_q=L_q(Y)$, $H_q=T_q(\Sec(Y))$.
Let $\pi_{L_q}:Y\dashrightarrow\PP^{16}$ be the linear projection from $L_q$ and
$H'_q\simeq\PP^{25}\subset\PP^{16}$ the projection of $H_q$ from $L_q$.
We note that,  by the proof of the
Scorza Lemma in \cite{russo-specialvarieties}, it follows that 
$\Sigma_q=\overline{\tau_{z_1,Y}^{-1}(\tau_{z_1,Y}(z_2))}$ and,
 since the image
 of the general tangential projection  is smooth of 
dimension $8$ and the general entry locus is smooth 
of dimension $8=\dim(Y)-\dim(W_{z_1,Y})$, it follows
 that $Y$ is smooth along $\Sigma_q(Y)$.
In particular,  it 
follows that $H_q$ is tangent to
$Y$ along $\Sigma_q$. 
\begin{step}\label{step: birationality}
$\pi_{L_q}$ is birational.
\end{step}
\begin{proof}[of the Claim]
See also \cite[Proposition~3.3.14]{russo-specialvarieties}.
By the generality of the points
 $z_1,z_2,q$ it follows 
\begin{equation}\label{eq: 1, step birationality}
 L_q=\langle T_{z_1}(Y),z_2 \rangle \cap \langle T_{z_2}(Y),z_1 \rangle.
\end{equation}
The projection from the linear space
 $\langle T_{z_1}(Y),z_2 \rangle$ can be obtained as the 
composition of the tangential projection 
$\tau_{z_1,Y}:Y\dashrightarrow W_{z_1,Y}\subset\PP^9$ and 
of the projection of $W_{z_1,Y}$ from the point $\tau_{z_1,Y}(z_2)$.
 Thus the projection from $\langle T_{z_1}(Y),z_2 \rangle$, 
$\pi_{z_1,z_2}:Y\dashrightarrow\PP^8$ is dominant and 
 for the general point $z\in Y$ we get 
\begin{equation}\label{eq: 2, step birationality}
  \langle T_{z_1}(Y),z_2, z \rangle \cap Y \setminus \langle T_{z_1}(Y),z_2 \rangle 
= \pi_{z_1,z_2}^{-1}(\pi_{z_1,z_2}(z))=Q_{z_1,z}\setminus \langle T_{z_1}(Y),z_2 \rangle,
\end{equation}
where $Q_{z_1,z}$ denotes the entry locus of $Y$ with respect to a general point on $\langle z_1, z \rangle$.
Similarly  
\begin{equation}\label{eq: 3, step birationality}
  \langle T_{z_2}(Y),z_1, z \rangle \cap Y \setminus \langle T_{z_2}(Y),z_1 \rangle 
= \pi_{z_2,z_1}^{-1}(\pi_{z_2,z_1}(z))=Q_{z_2,z}\setminus \langle T_{z_2}(Y),z_1 \rangle.
\end{equation}
Now 
$
 \pi_{L_q}^{-1}(\pi_{L_q}(z))=\langle L_q,z \rangle\cap Y \setminus \Sigma_q 
$
and, by the generality of $z$, 
\begin{equation}\label{eq: 4, step birationality}
\pi_{L_q}^{-1}(\pi_{L_q}(z))=\langle L_q,z \rangle\cap Y \setminus H_q.
\end{equation}
By (\ref{eq: 1, step birationality}), (\ref{eq: 2, step birationality}), (\ref{eq: 3, step birationality}) and (\ref{eq: 4, step birationality}) 
and observing that the spaces $\langle T_{z_1}(Y),z_2\rangle$, 
$\langle T_{z_2}(Y),z_1\rangle$ are contained in $H_q$,
it follows 
\begin{equation}\label{eq: 5, step birationality}
\{z\}\subseteq \pi_{L_q}^{-1}(\pi_{L_q}(z))\subseteq Q_{z_1,z}\cap Q_{z_2,z}.
\end{equation}
Finally, as
 we have already observed  in  Claim \ref{step: gammaY=deltaY}, the restriction
of the tangential projection
 $\tau_{z_1,Y}$ to $Q_{z_{2},z}$ is an isomorphism
$\bar{\tau}:=(\tau_{z_1,Y})|_{Q_{z_{2},z}}:Q_{z_2,z}\rightarrow W_{z_1,Y}$;
hence 
\begin{equation}\label{eq: 6, step birationality}
\{z\}=\bar{\tau}^{-1}(\bar{\tau}(z))=\tau_{z_1,Y}^{-1}(\tau_{z_1,Y}(z))\cap Q_{z_2,z}=Q_{z_1,z}\cap Q_{z_2,z}. 
\end{equation}
By (\ref{eq: 5, step birationality}) and (\ref{eq: 6, step birationality}), it follows $\pi_{L_q}^{-1}(\pi_{L_q}(z))=\{z\}$
and hence the birationality of $\pi_{L_q}$.
\end{proof}
\begin{step}\label{step: isomorphism}
$\pi_{L_q}$ induces an isomorphism
$Y\setminus H_q\stackrel{\simeq}{\longrightarrow} \PP^{16}\setminus H'_q$.
\end{step}
\begin{proof}[of the Claim]
We resolve  the indeterminacies of $\pi_{L_q}$ with the diagram
\begin{displaymath}
\xymatrix{& \Bl_{\Sigma_{q}}(Y) \ar[dl]_{\alpha} \ar[dr]^{\widetilde{\pi_{L_q}}}\\
Y\ar@{-->}[rr]^{\pi_{L_q}}& & \PP^{16}}
\end{displaymath}
The morphism $\widetilde{\pi_{L_q}}$ is projective and birational
 and hence surjective. Moreover, the
  points of the base locus  of $\pi_{L_q}^{-1}$ are the points for which
  the fiber of $\widetilde{\pi_{L_q}}$ has positive dimension.
Since $H_q\supseteq L_q$ and $H_q\cap Y=\overline{\pi_{L_q}^{-1}(H'_q)}$,
in order to prove the assertion, it suffices to
show  that, for every $w\in\PP^{16}\setminus H'_q$,
 $\dim\left(\widetilde{\pi_{L_q}}^{-1}(w)\right)=0$. 
Suppose by contradiction that there exists $w\in \PP^{16}\setminus H'_q$ 
such that $Z:=\widetilde{\pi_{L_q}}^{-1}(w)$ has positive dimension. 
Then, for the choice  of $H_q$, 
we have 
$\emptyset=Z\cap \alpha^{-1}(H_q\cap Y)\supseteq Z\cap \alpha^{-1}(\Sigma_{q})$ and 
therefore 
$\alpha(Z)$ contains an irreducible curve $C$ 
with $\pi_{L_q}(C)=w$ and $C\cap L_q=\emptyset$, against the fact
that a linear projection, when is defined everywhere, is a finite morphism. 
\end{proof}
\begin{step}\label{step: Y smooth}
$Y$ is smooth.
\end{step}
\begin{proof}[of the Claim]
Suppose that there exists a point
 $z_0$ with
$$z_0\in
\bigcap_{ q\in\Sec(Y) \atop \mathrm{generale} } T_q\left(\Sec(Y)\right)\cap Y = 
  \bigcap_{q\in\Sec(Y)} T_q\left(\Sec(Y)\right)\cap Y =
  \mathrm{Vert}\left(\Sec(Y)\right)\cap Y .$$
If $z\in Y$ is a general point, since $Y$ is not a cone, 
the tangential projection $\tau_{z,Y}$ is defined in $z_0$ and
it follows that 
$\tau_{z,Y}(z_0)$ is a vertex of $W_{z,Y}$.
This contradicts Claim \ref{step: gammaY=deltaY} and
 hence we have
$\bigcap_{ q\in\Sec(Y) \atop \mathrm{generale} } T_q\left(\Sec(Y)\right)\cap Y=\emptyset$,
from which we conclude by Claim \ref{step: isomorphism}.
\end{proof}
Now we can conclude the proof of
 Theorem \ref{prop: classification of type 2-2 into quadric}.
 By  Claim \ref{step: Y smooth} it follows that $Y\subset\PP^{26}$ 
is a Severi variety, so by their 
classification (see Table \ref{tab: severi varieties} or 
directly \cite[\Rmnum{4} Theorem~4.7]{zak-tangent}) it follows
 that $Y=E_6$;
moreover, since
$
27=h^0(\PP^{26}, \I_{\B',\PP^{26}}(2))\leq 
h^0(\PP^{26}, \I_{Y,\PP^{26}}(2))=27,
$
we have $Y=\B'$. Now, by the
classification of the special Cremona transformations of type $(2,2)$
 in \cite[Theorem~2.6]{ein-shepherdbarron}
(or also by a direct calculation), it follows that the lifting 
$\psi:\PP^{26}\dashrightarrow\PP^{26}$ 
of $\varphi^{-1}:\Q\dashrightarrow\PP^{25}=:H\subset\PP^{26}$ is
a birational transformation of type $(2,2)$ and therefore 
the base locus  $\widehat{\B}\subset\PP^{26}$ of the inverse of 
 $\psi$ is again the variety $E_6$.
Of course $\widehat{\B}\cap H = \B$ and hence the thesis.
\end{proof}
\begin{remark}
We observe that from
 Claim \ref{step: gammaY=deltaY} and 
\cite[Proposition~5.13]{chiantini-ciliberto}
it follows
that $Y$ is a \emph{$R_1$-variety} and hence by
\cite[Theorem~8.7]{chiantini-ciliberto}
it follows
Claim \ref{step: Y smooth}.
\end{remark}
\begin{remark} 
Note that the proof of Theorem \ref{prop: classification of type 2-2 into quadric} 
could  be  simplified
if one had shown  a priori  that
the map $\varphi:\PP^{n}\dashrightarrow\PP^{n+1}$ is the restriction of  a birational map
$\widehat{\varphi}:\PP^{n+1}\dashrightarrow\PP^{n+1}$  of type $(2,2)$.
In fact, if it were the  case, denoting by
$\widehat{\B}$ the base locus 
 of $\widehat{\varphi}$,  one could deduce that:
\begin{itemize}
\item  $\sing(\widehat{\B})$ is a finite set 
(otherwise  the very ample divisor 
 $\B$ of $\widehat{\B}$ would intersect an irreducible curve contained in 
 $\sing({\widehat{\B}})$ and then $\B$ would be singular, see \cite{debarre});
\item  $\widehat{\B}$ cannot be a cone 
(this follows by the birationality of $\widehat{\varphi}$).
\end{itemize}
Thus, restricting $\widehat{\varphi}$ to a general hyperplane,
we get a special quadratic birational transformation into a general quadric containing $\B'$,  
for which  $\B$ would be  a general  hyperplane section of
 $\widehat{\B}$; in particular 
$\widetilde{\gamma}(\B)\geq\widetilde{\gamma}(\widehat{\B})\geq 0$.
Now, as in Claim \ref{step: smooth image tangential projection}, 
one would deduce that $\widetilde{\gamma}(\B)=0$ and
hence that $\widehat{\B}$ is a Severi variety, by \cite{chiantini-ciliberto}.
\end{remark}

\section{Transformations whose base locus has dimension \texorpdfstring{$\leq3$}{<=3}}\label{sec: small dimension of B}
Let $\varphi$ be a special transformation  as
in Notation \ref{notation: factorial hypersurface} and let $r\leq3$.  
From  Proposition \ref{prop: dimension formula} 
we get the following possibilities for $(r,n)$:    $(1,4)$; $(2,6)$;  $(3,7)$; $(3,8)$.
If $(r,n)\in\{(1,4),(3,7)\}$ then $(d,\Delta)=(2,2)$
and these cases  
have already been classified in 
Theorem \ref{prop: classification of type 2-2 into quadric}.
\subsection{Case \texorpdfstring{$(r,n)=(2,6)$}{(r,n)=(2,6)}}
Lemma \ref{prop: castelnuovo argument} is  Castelnuovo's classic argument.
\begin{lemma}\label{prop: castelnuovo argument}  
 If $\Lambda\subset\PP^{c}$ is a set 
 of $\lambda\leq 2c+1$ points in general position, then
$\Lambda$ imposes independent conditions to the quadrics of $\PP^c$, i.e. 
$
h^0(\PP^{c},\I_{\Lambda,\PP^c}(2))=
\left( c+1\right) \left( c+2\right) /2-\lambda
$.
\end{lemma}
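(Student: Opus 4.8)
The plan is to reduce the statement to the standard criterion that the $\lambda$ points impose independent conditions on $|\O_{\PP^c}(2)|$ exactly when, for each $p_i\in\Lambda$, there is a quadric passing through all the remaining points but not through $p_i$. Indeed, from the structure sequence $0\to\I_{\Lambda,\PP^c}(2)\to\O_{\PP^c}(2)\to\O_\Lambda(2)\to 0$ (here $\O_\Lambda(2)\simeq\CC^\lambda$ since $\Lambda$ is reduced) one gets $h^0(\I_{\Lambda,\PP^c}(2))=h^0(\O_{\PP^c}(2))-\lambda=(c+1)(c+2)/2-\lambda$ as soon as the restriction $H^0(\O_{\PP^c}(2))\to\CC^\lambda$ is surjective, and surjectivity is equivalent to producing, for every index $i$, a quadric $Q_i$ with $Q_i\supseteq\Lambda\setminus\{p_i\}$ and $p_i\notin Q_i$. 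Note that $\lambda\le 2c+1\le (c+1)(c+2)/2$ for all $c\ge 1$ (the difference is $c(c-1)/2\ge 0$), so the resulting count is nonnegative and consistent.

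First I would build each $Q_i$ as a union of two hyperplanes. Since $|\Lambda\setminus\{p_i\}|=\lambda-1\le 2c$, I can partition $\Lambda\setminus\{p_i\}$ into two subsets $A,B$ with $|A|,|B|\le c$. It then suffices to find hyperplanes $H_A\supseteq A$ and $H_B\supseteq B$ with $p_i\notin H_A$ and $p_i\notin H_B$: the quadric $Q_i:=H_A\cup H_B$ (i.e. the product of the two defining linear forms) contains $A\cup B=\Lambda\setminus\{p_i\}$ and misses $p_i$.

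The key point, and the only place general position enters, is the existence of such an $H_A$ (and symmetrically $H_B$) avoiding $p_i$. Interpreting general position as linear general position, every subset of $\Lambda$ of size $\le c+1$ is linearly independent; in particular the at most $c+1$ points $A\cup\{p_i\}$ are independent, so $p_i\notin\langle A\rangle$ and $\dim\langle A\rangle=|A|-1\le c-1$. The hyperplanes containing $\langle A\rangle$ form a linear system of projective dimension $c-1-\dim\langle A\rangle\ge 0$, while those additionally passing through $p_i$ form a proper subsystem (since $p_i\notin\langle A\rangle$); hence some $H_A\supseteq A$ with $p_i\notin H_A$ exists. When $|A|=c$ this is automatic, because $\langle A\rangle$ is already the unique hyperplane through $A$ and general position forces $p_i\notin\langle A\rangle$.

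I expect the main obstacle to be essentially bookkeeping rather than geometry: one must check that the hypothesis $\lambda\le 2c+1$ is precisely what allows the splitting into two groups of size $\le c$ (with one extra point a group of size $c+1$ could be unavoidable, and then no hyperplane need contain it while missing $p_i$), and one must be careful to phrase general position so that every subset of size up to $c+1$ is independent. Once the hyperplanes $H_A,H_B$ are produced for each $i$, the quadrics $Q_i$ certify independent conditions, and the cohomology count above yields the claimed value of $h^0(\PP^c,\I_{\Lambda,\PP^c}(2))$.
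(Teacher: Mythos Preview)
Your proposal is correct and follows essentially the same approach as the paper: both construct, for each point $p_i$, a quadric through $\Lambda\setminus\{p_i\}$ avoiding $p_i$ as a union of two hyperplanes, each containing at most $c$ of the remaining points. The paper streamlines slightly by reducing at the outset to the extremal case $\lambda=2c+1$, so that the two groups have exactly $c$ points and the hyperplanes $H_1=\langle p_1,\ldots,p_c\rangle$, $H_2=\langle p_{c+1},\ldots,p_{2c}\rangle$ are uniquely determined; general position then immediately gives $p_0\notin H_1\cup H_2$, bypassing your discussion of the linear system of hyperplanes through a smaller subset.
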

\begin{proof}
We can assume $\lambda=2c+1$.     
Put $\Lambda=\{p_0,\ldots,p_{2c}\}$ 
and consider the hyperplanes  $H_1=\langle p_1,\ldots,p_c\rangle$ and
 $H_2=\langle p_{c+1},\ldots,p_{2c}\rangle$.     
Since the points are in general position,
the quadric  
 $H_1\cup H_2$ contains the points  $p_1,\ldots,p_{2c}$,  but not  $p_0$.     
This proves the exactness of the sequence 
$0\rightarrow H^0(\PP^c,\I_{\Lambda,\PP^c}(2))\rightarrow 
H^0(\PP^c, \O_{\PP^c}(2))\rightarrow
 H^0(\Lambda, \O_{\Lambda}(2))=\bigoplus_{i=0}^{2c}\CC\rightarrow 0 $,
from which the assertion follows.
\end{proof}
\begin{proposition}\label{prop: 2-fold in P6}
 Let $\varphi:\PP^6\dashrightarrow\overline{\varphi(\PP^6)}=\sS\subset\PP^7$ be
 birational and special of type $(2,d)$,   with
$\sS$ a factorial hypersurface of degree $\Delta\geq2$.  
If $r=\dim(\B)=2$, then
 $\B$ is the blow-up $\sigma:\Bl_{\{p_0,\ldots,p_5\}}(\PP^2)\rightarrow\PP^2$ 
of $6$ points in the plane with $H_{\B}\sim \sigma^{\ast}(4H_{\PP^2})-2E_0-E_1-\cdots-E_5$ ($E_0,\ldots,E_5$ are the exceptional divisors).     
Moreover, we have $d=3$ and $\Delta=2$.
\end{proposition}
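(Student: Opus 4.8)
The plan is to read off the numerical invariants of $\B$ from the formulae already established, then to recognise $\B$ as an embedded surface, and finally to determine $d$ and $\Delta$.

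First I would put $r=2$ and $n=6$ into Proposition~\ref{prop: dimension formula}. The expression for $r$ becomes $2=(3d-\Delta+3)/(2d-1)$, i.e. $\Delta=5-d$; substituting into the expression for $\delta$ gives $\delta=0$. As $d\geq2$ and $\Delta\geq2$, the only surviving pairs are $(d,\Delta)=(2,3)$ and $(d,\Delta)=(3,2)$, and in either case $\B$ is a $QEL$-surface of type $0$ whose secant variety $\Sec(\B)\subset\PP^6$ is a hypersurface of degree $2d-1$.

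Next I would compute the Hilbert polynomial. By Proposition~\ref{prop: cohomology properties}(1) the surface $\B$ is nondegenerate and projectively normal, so $h^0(\O_\B(1))=7$; by Lemma~\ref{prop: cohomology I2B} we have $h^0(\PP^6,\I_\B(2))=8$, whence $h^0(\O_\B(2))=28-8=20$; and $h^0(\O_\B)=1$. Lemma~\ref{prop: cohomology twisted ideal} gives $h^i(\PP^6,\I_\B(t))=0$ for $i>0$ and $t\geq n-2r-1=1$, so these three numbers are the values $P_\B(0),P_\B(1),P_\B(2)$ and force $P_\B(t)=(7t^2+5t+2)/2$. Hence $\deg(\B)=7$, the sectional genus is $g=2$, and $\chi(\O_\B)=1$; Riemann--Roch applied to $\O_\B(1)$ then gives $K_\B\cdot H=-5$ for $H=\O_\B(1)$. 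This is precisely the numerical profile of the surface in Example~\ref{example: d=3 Delta=2 continuing}.

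The hard part will be to prove that a smooth surface with $(\deg,g,\chi)=(7,2,1)$ in $\PP^6$ is the asserted blow-up. I would invoke the classification of smooth surfaces of sectional genus $2$, or run adjunction theory directly: since $g=2$ and $\chi(\O_\B)=1$, the pair $(\B,H)$ is none of the adjunction-exceptional cases ($\PP^2$, a quadric, or a scroll, all of which would violate $g=2$ or $\chi(\O_\B)=1$), so $K_\B+H$ is nef. The sectional-genus relation reads $(K_\B+H)\cdot H=2g-2=2$, and the only model compatible with $\deg=7$ and $\chi=1$ is $\B\simeq\Bl_{\{p_0,\dots,p_5\}}(\PP^2)$ with $K_\B^2=3$, for which $(K_\B+H)^2=0$ and the adjoint map $\phi_{|K_\B+H|}$ is a conic bundle over $\PP^1$. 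The very ample class $H$ is then determined up to the Weyl-group action on $\Pic(\B)$, and a single quadratic Cremona transformation of $\PP^2$ brings it to the normal form $4\,H_{\PP^2}-2E_0-E_1-\cdots-E_5$. Along the way I would use Lemma~\ref{prop: castelnuovo argument} to verify that six general points impose independent conditions on the relevant systems, so that this model is smooth, projectively normal, and cut out by exactly the $8$ quadrics demanded by Lemma~\ref{prop: cohomology I2B}.

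It remains to fix $d$ and $\Delta$. By Lemma~\ref{prop: cohomology I2B} the linear system defining $\varphi$ is the whole of $H^0(\PP^6,\I_\B(2))$, so $\varphi$ is determined by $\B$ up to projective equivalence; it therefore suffices to compute the degree of the inverse on this single surface. The explicit model of Example~\ref{example: d=3 Delta=2 continuing} has exactly this $\B$ as base locus and is birational of type $(2,3)$ onto a smooth quadric, so $d=3$ and $\Delta=2$; equivalently, $\deg\Sec(\B)=2d-1=5$, which excludes $(d,\Delta)=(2,3)$. This completes the proof.
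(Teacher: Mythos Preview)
Your computation of the Hilbert polynomial has a gap. Lemma~\ref{prop: cohomology twisted ideal} gives the vanishing $h^i(\PP^6,\I_\B(t))=0$ only for $t\geq n-2r-1=1$, so you legitimately know $P_\B(1)=7$ and $P_\B(2)=20$, but \emph{not} $P_\B(0)$. The number $h^0(\O_\B)=1$ is not the same as $\chi(\O_\B)=1-q+p_g$; you have no a priori control on $q$ and $p_g$. With only two values of a quadratic polynomial you get a one-parameter family
\[
P_\B(t)=\frac{\lambda t^{2}+(26-3\lambda)t+2\lambda-12}{2},\qquad g=2(\lambda-6),\qquad \chi(\O_\B)=\lambda-6,
\]
and the degree $\lambda$ must be pinned down by an independent argument.

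This is exactly where the paper uses Lemma~\ref{prop: castelnuovo argument}, and not in the way you invoke it. Cutting $\B$ with a general $\PP^4$ yields $\lambda$ points in general position; if $\lambda\leq 9$ they impose independent conditions on quadrics of $\PP^4$, forcing $8=h^0(\I_\B(2))\leq 15-\lambda$, i.e.\ $\lambda\leq 7$, while $\lambda\geq 9$ is excluded by the same count. Together with $g\geq 0$ (hence $\lambda\geq 6$) this leaves $\lambda\in\{6,7\}$, and the paper then cites Ionescu's classification \cite{ionescu-smallinvariants} to discard $\lambda=6$ and identify the $\lambda=7$ surface. Your adjunction-theory sketch only becomes relevant \emph{after} $\lambda=7$ is established; as written it presupposes the very conclusion $(\deg,g,\chi)=(7,2,1)$ you need to prove. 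Once $\B$ is identified, your final step---that $\varphi$ is determined by $\B$ and hence Example~\ref{example: d=3 Delta=2 continuing} fixes $(d,\Delta)=(3,2)$---agrees with the paper's argument.
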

\begin{proof}
 By Lemma \ref{prop: cohomology twisted ideal}, it follows 
$\chi(\B,\O_{\B}(1))=7$ and $\chi(\B,\O_{\B}(2))=20$,   from which we deduce
$$
P_{\B}(t)=\left(\lambda\,{t}^{2}+\left( 26-3\,\lambda\right) \,t+2\,\lambda-12\right)/2=\left(\left( g+12\right)\,{t}^{2} +\left( 16-3\,g\right)\,t+2\,g \right)/4
$$
and hence 
$g=2(\lambda-6)$.     In 
particular $\lambda\geq6$,   being $g\geq0$.
Now, if $\lambda\leq 2\,\mathrm{codim}_{\PP^6}(\B)+1=9$,   cutting 
 $\B$ with a general $\PP^4\subset\PP^6$,
we obtain a set 
$\Lambda\subset\PP^4$ of $\lambda$ points
 that imposes independent conditions to the quadrics
 of $\PP^4$ (Lemma \ref{prop: castelnuovo argument});
hence 
$
 h^0(\PP^6,\I_{\B,\PP^6}(2))\leq h^0(\PP^4,\I_{\Lambda,\PP^4}(2))=h^0(\PP^4,\O_{\PP^4}(2))-\lambda 
$, 
 i.e.  $\lambda\leq7$.   
  Moreover, if $\lambda\geq 9$,   $\Lambda$ would impose at 
least 
 $9$ conditions to the quadrics  and hence we would get the contradiction 
$h^0(\PP^6,\I_{\B
}(2))\leq 6$.
Hence $\lambda=6$ or $\lambda=7$,   and in both cases, knowing 
the expression of the Hilbert polynomial, we conclude
applying 
 \cite{ionescu-smallinvariants}:
if $\lambda=6$,   such a variety does not exist; 
if $\lambda=7$,   then $\B$ is as asserted. 
Finally,  by
Remark \ref{remark: dimension formula without hypothesis}, 
we get either $(d,\Delta)=(3,2)$ or $(d,\Delta)=(2,3)$, 
but the latter case  is impossible by Example \ref{example: d=3 Delta=2}
(the pair $(d,\Delta)$ may also be determined 
by calculating the Chern classes of $\B$, as in Remark \ref{rem: chern classes}). 
\end{proof}

\subsection{Case \texorpdfstring{$(r,n)=(3,8)$}{(r,n)=(3,8)}}
Firstly we observe that if $(r,n)=(3,8)$  by Remark \ref{remark: dimension formula without hypothesis} 
it follows  $d+\Delta=6$ and hence we have $(d,\Delta)\in\{(2,4),(3,3),(4,2)\}$.
\begin{remark}\label{rem: chern classes}
See also \cite{crauder-katz-1989} and \cite{crauder-katz-1991}.
Let notation be as in the proof of 
Proposition \ref{prop: dimension formula} and  let $(r,n)=(3,8)$. 
Denote by $c_j:=c_j(\mathcal{T}_{\B})\cdot H_{\B}^{3-j}$ 
(resp. $s_j:=s_j(\mathcal{N}_{\B,\PP^8})\cdot H_{\B}^{3-j}$), for $1\leq j\leq 3$,
the degree of the $j$-th Chern class (resp. Segre class) of $\B$.
From the exact sequence
$0\rightarrow\mathcal{T}_{\B}\rightarrow \mathcal{T}_{\PP^8}|_{\B}\rightarrow\mathcal{N}_{\B,\PP^8}\rightarrow0$
we get: $s_1=c_1-9\lambda$,
$s_2=c_2-9c_1+45\lambda$, 
$s_3=c_3-9c_2+45c_1-165\lambda$.
Moreover
\begin{eqnarray*}
\lambda&=& H_{\B}^3=-K_{\B}\cdot H_{\B}^2+2g-2-\lambda 
= s_1+8\lambda+2g-2, \\
d \Delta &=& d{H'}^{8}={H'}^{7}\cdot(dH'-E')=(2H-E)^7\cdot H \\
&=& -H\cdot E^7+14H^2\cdot E^6-84H^3\cdot E^5 + 128H^8 
= -s_2 -14 s_1 -84 \lambda +128, \\
\Delta &=& {H'}^8=(2H-E)^8  
= E^8-16H\cdot E^7+112H^2\cdot E^6-448H^3\cdot E^5+256H^8 \\
&=& -s_3-16s_2-112s_1 -448\lambda+256,
\end{eqnarray*}
and hence
\begin{displaymath}
\left\{
 \begin{array}{l} 
  s_1=-7\lambda-2g+2, \\
s_2=14\lambda+28g-d\Delta+100, \\
s_3=112\lambda-224g+(16d-1)\Delta-1568, 
 \end{array}
\right.
\left\{
 \begin{array}{l} 
c_1=2\lambda-2g+2, \\
c_2=-13\lambda+10g-d\Delta+118, \\
c_3=70\lambda-44g+(7d-1)\Delta-596. 
 \end{array}
\right.
\end{displaymath}
Also, if $S$ is a general hyperplane section of $\B$, 
from the exact sequence
 $0\rightarrow\mathcal{T}_{S}\rightarrow\mathcal{T}_{\B}|_{S}\rightarrow\O_{S}(1)\rightarrow0$,
 we deduce 
$c_2=c_2(S)+c_1(S)=12\chi(\O_{S})-K_{S}^2-K_{S}\cdot H_{S}$
and hence
\begin{displaymath}
K_S^2=14\lambda+12\chi(\O_S)-12g+d\Delta-116.
\end{displaymath}
\end{remark}

\begin{proposition}\label{prop: 3-fold in P8 - S nonlinear}
Let $\varphi:\PP^8\dashrightarrow\overline{\varphi(\PP^8)}=\sS\subset\PP^9$ be
 birational and special of type $(2,d)$, 
with $\sS$ a factorial hypersurface of degree $\Delta\geq2$.
 If $r=\dim(\B)=3$,
 then  one of the following cases holds:
\begin{enumerate}[(i)]
 \item\label{part: case 0, 3-fold} $\lambda=12$, $g=7$, $d=4$, $\Delta=2$, $\B$ is a linear section of the spinorial variety $S^{10}\subset\PP^{15}$;
 \item\label{part: case 0.5, 3-fold} $\lambda=12$, $g=7$, $d=2$, $\Delta=4$, $\B$ is a Mukai variety with Betti numbers  $b_2=2$, $b_3=18$;
 \item\label{part: case 1, 3-fold} $\lambda=11$, $g=5$, $d=3$, $\Delta=3$, $\B$ is the variety $\mathfrak{Q}_{p_1\ldots,p_5}$ defined as the blow-up 
of  $5$ points $p_1,\ldots,p_5$ (possibly infinitely near) in a smooth quadric $Q\subset\PP^4$,   
  with $H_{\mathfrak{Q}_{p_1,\ldots,p_5}}\sim \sigma^{\ast}(2{H_{\PP^4}}|_{Q})-E_1-\cdots-E_5$, 
where $\sigma$ is the blow-up map and $E_1,\ldots,E_5$ are 
the exceptional divisors; 
 \item\label{part: case 2, 3-fold} $\lambda=11$, $g=5$, $d=4$, $\Delta=2$, $\B$ is a scroll over 
                                    $\PP_{\PP^1}(\O\oplus\O(-1))$.  
\end{enumerate}
\end{proposition}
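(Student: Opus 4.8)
The plan is to reduce the statement to a short list of numerical types and then to identify $\B$ in each by invoking the classification of polarized threefolds. First I would specialize the dimension formula to $r=3$, $n=8$. By Remark~\ref{remark: dimension formula without hypothesis} the identity $r=(dn-\Delta-3d+3)/(2d-1)$ holds unconditionally, and $r=3$ forces $d+\Delta=6$, whence $(d,\Delta)\in\{(2,4),(3,3),(4,2)\}$; when (\ref{eq: hypothesis on sing locus hypersurface}) holds, Proposition~\ref{prop: dimension formula} gives in addition $\delta=(12-2(d+\Delta))/(2d-1)=0$ and $r'=6$ (the one place where (\ref{eq: hypothesis on sing locus hypersurface}) fails being exactly Example~\ref{example: B2=singSred-3fold}, i.e.\ case~(\ref{part: case 0.5, 3-fold})). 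Thus $\B$ is a smooth, nondegenerate, projectively normal (Proposition~\ref{prop: cohomology properties}(\ref{part: B is linearly normal})) $QEL$-threefold of type $0$ whose secant variety $\Sec(\B)$ is a hypersurface of degree $2d-1\le 7$. Since $n-2r-1=1$, Lemma~\ref{prop: cohomology twisted ideal} yields $H^i(\PP^8,\I_{\B}(t))=0$ for $i>0$, $t\ge 1$, and together with nondegeneracy and Lemma~\ref{prop: cohomology I2B} ($h^0(\I_{\B}(2))=10$) this gives $P_{\B}(1)=9$ and $P_{\B}(2)=35$.

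Next I would bound the degree $\lambda$ and sectional genus $g$. Passing to a general curve section $C\subset\PP^6$, which is again projectively normal with $h^0(\PP^6,\I_{C}(2))=10$, a general hyperplane section of $C$ is a set $\Gamma$ of $\lambda$ points in uniform position in $\PP^5$ imposing exactly $21-10=11$ conditions on quadrics. By Castelnuovo's argument (Lemma~\ref{prop: castelnuovo argument}) points in general position impose independent conditions as long as their number is $\le 2\cdot 5+1=11$; hence either $\lambda=11$, or $\lambda\ge 12$ and $\Gamma$ (so $C$, so $\B$) lies on a variety of minimal degree. Combining this with the secant-degree constraint $\deg\Sec(\B)=2d-1\le 7$ (meaningful because $\delta=0$ makes the secant map birational) and with the Chern-class identities of Remark~\ref{rem: chern classes}, which express $s_j$, $c_j$, $K_S^2$ and the holomorphic Euler characteristics in terms of $\lambda,g,d,\Delta$, one is left with only finitely many integral, positive solutions, namely $(\lambda,g)\in\{(11,5),(12,7)\}$, each matching the stated $(d,\Delta)$.

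Finally, for each admissible pair I would identify $\B$ by the classification of smooth threefolds of that degree and sectional genus cut out by quadrics. For $(\lambda,g)=(12,7)$, $\B$ is a Fano (Mukai) threefold of index $1$ and genus $7$, so by Mukai's classification it is either a linear section of the spinor tenfold $S^{10}\subset\PP^{15}$ (case~(\ref{part: case 0, 3-fold}), forcing $(d,\Delta)=(4,2)$ as in Example~\ref{example: d=4 Delta=2}) or the threefold with $b_2=2$, $b_3=18$ (case~(\ref{part: case 0.5, 3-fold}), $(d,\Delta)=(2,4)$, Example~\ref{example: B2=singSred-3fold}); for $(\lambda,g)=(11,5)$ the adjunction-theoretic classification (Fujita, Ionescu) produces either the blow-up $\mathfrak{Q}_{p_1,\ldots,p_5}$ of a smooth quadric $Q\subset\PP^4$ embedded by $2H_Q-E_1-\cdots-E_5$ (case~(\ref{part: case 1, 3-fold}), $(d,\Delta)=(3,3)$, Example~\ref{example: d=3 Delta=3}) or a scroll over $\PP_{\PP^1}(\O\oplus\O(-1))$ (case~(\ref{part: case 2, 3-fold}), $(d,\Delta)=(4,2)$); in every case the Chern-class relations of Remark~\ref{rem: chern classes}, evaluated on the now-known Chern numbers of $\B$, recover the correct $(d,\Delta)$. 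The main obstacle is precisely the step that pins down $(\lambda,g)$ and separates the two varieties within each class: bounding $\lambda$ away from the spurious high-degree solutions requires pushing the Castelnuovo and secant-degree estimates to their limit, and distinguishing the Fano member from the blow-up or scroll member rests entirely on the fine classification of polarized threefolds.
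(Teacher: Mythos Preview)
Your overall plan---bound $\lambda$, list the candidates, and use Remark~\ref{rem: chern classes} to read off $(d,\Delta)$---matches the paper's, but the middle step where you ``combine'' Castelnuovo, the secant degree, and the Chern identities to land on $(\lambda,g)\in\{(11,5),(12,7)\}$ is not an argument yet. The Chern formulas of Remark~\ref{rem: chern classes} express everything in terms of $\lambda$, $g$, $\chi(\O_S)$ and $(d,\Delta)$; without an independent handle on $\chi(\O_S)$ (equivalently on $q=h^1(\O_S)$) you cannot close the system. The paper supplies this by splitting on the general surface section $S$: if $K_S\not\sim 0$ then $h^2(\O_S)=0$, whence $h^2(\O_\B)=h^3(\O_\B)=h^3(\O_\B(-1))=0$, so together with $P_\B(1)=9$, $P_\B(2)=35$ and $\chi(\O_\B)=1-q$, $\chi(\O_\B(-1))=0$ one gets $(\lambda,g)=(11-3q,\,5-5q)$, and $g\ge 0$ plus Fujita's classification force $q=0$; if $K_S\sim 0$ then $2g-2=\lambda$, and $\lambda\le 12$ (from $h_\Lambda(2)=11$ and \cite[Lemma~1.10]{ciliberto-hilbertfunctions}, not merely Lemma~\ref{prop: castelnuovo argument}) together with Castelnuovo's genus bound pin down $(\lambda,g)=(12,7)$. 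Your ``$\lambda\ge 12\Rightarrow\Gamma$ lies on a minimal-degree variety'' does not by itself give $\lambda\le 12$.

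There is a second gap in the identification step. For $(\lambda,g)=(11,5)$ Ionescu's list has \emph{three} entries, not two: besides $\mathfrak{Q}_{p_1,\dots,p_5}$ and the scroll, there is a quadric fibration over $\PP^1$, and the scroll itself could a priori sit over either $\mathbb{F}_1$ or the blow-up of $\PP^2$ in five points. The paper rules out the quadric fibration via $c_2(\B)=20\Rightarrow d\Delta=5$, and rules out the second scroll base via $c_3(\B)=2c_2(Y)$, which forces $c_2(Y)=4$. Likewise for $(\lambda,g)=(12,7)$ with $b_2\ge 2$, the Mori--Mukai table gives four candidates $(b_2,b_3)$; only $(2,18)$ is compatible with $c_3(\B)=-7d^2+43d-70$ and an integer $d$. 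Finally, note that $\deg\Sec(\B)=2d-1$ and the $QEL$ structure come from Proposition~\ref{prop: dimension formula}, which needs hypothesis~(\ref{eq: hypothesis on sing locus hypersurface}); the present proposition does not assume it (case~(\ref{part: case 0.5, 3-fold}) is exactly where it fails), and indeed the paper's proof of this proposition never invokes the secant degree.
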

\begin{proof}
 By Proposition \ref{prop: cohomology properties},  
 $\B\subset\PP^8$ is nondegenerate and linearly normal. 
 Let $\Lambda\subset C\subset S \subset \B \subset \PP^8$ be a sequence of
 general linear sections 
 of $\B$.
By the exact sequence 
$ 0\rightarrow \I_{\B,\PP^{8}}(-1)\rightarrow\I_{\B,\PP^{8}}\rightarrow \I_{S, \PP^7} 
\rightarrow 0 $
and those similar for
 $S$ and $C$,  using that $\B$,   $S$,   $C$ are nondegenerate, we get
the inequality
$h^0(\PP^8,\I_{\B,\PP^8}(2))\leq h^0(\PP^5,\I_{\Lambda,\PP^5}(2))$.
In particular, putting
$h_{\Lambda}(2):=\dim(\mathrm{Im}(H^0(\PP^5,\O_{\PP^5}(2))\rightarrow 
H^0(\Lambda,\O_{\Lambda}(2))))$,
we have
\begin{equation}\label{eq: leq_h2}
h_{\Lambda}(2)\leq h^0(\PP^5,\O_{\PP^5}(2))-h^0(\PP^8,\I_{\B,\PP^8}(2))=11.
\end{equation}
If now $\#(\Lambda)=\lambda\geq 11$,   
taking $\Lambda'\subseteq\Lambda$ with  $\#(\Lambda')=11$,   
by Lemma \ref{prop: castelnuovo argument} we obtain 
\begin{equation}\label{eq: geq_h2}
h_{\Lambda}(2)\geq h^0(\PP^5,\O_{\PP^5}(2))-h^0(\PP^5,\I_{\Lambda',\PP^5}(2))=\#(\Lambda')=11.
\end{equation}
The inequalities
 (\ref{eq: leq_h2}) and (\ref{eq: geq_h2}) yield
$ h_{\Lambda}(2)=2\cdot6-1 $
and this, by \cite[Lemma~1.10]{ciliberto-hilbertfunctions}, yields a contradiction if $\lambda\geq 13$.
Thus we have 
$ \lambda\leq 12$
 and, by  Castelnuovo's bound \cite[page~252]{griffiths-harris}, 
we also have 
\begin{equation}\label{eq: K_S.H_Sleq0}
K_S\cdot H_S = (K_{\B}+H_{\B})\cdot H_{\B}^2=2g-2-\lambda\leq0.
\end{equation}
We discuss two cases.
\begin{case}[Suppose $K_S\nsim 0$] 
By (\ref{eq: K_S.H_Sleq0}) and 
by the proof of \cite[\Rmnum{5} Lemma~1.7]{hartshorne-ag},
it follows  that
$ h^2(S,\O_S)=h^2(S,\O_S(1))=0 $.
Consequently, by Lemma \ref{prop: cohomology twisted ideal} and by
the exact sequence
$ 0\rightarrow\O_{\B}(-1)\rightarrow\O_{\B}\rightarrow\O_S\rightarrow0 $,
 we obtain
$$h^2(\B,\O_{\B})=h^3({\B},\O_{\B})=h^3({\B},\O_{\B}(-1))=0.$$
Moreover $h^1(\B,\O_{\B})=h^1(S,\O_S)=:q$ and using again  
 Lemma \ref{prop: cohomology twisted ideal} we obtain
\begin{equation}\label{eq: conditions hilbert pol}
\begin{array}{cc}
\chi({\B},\O_{\B}(-1))=0, & \chi({\B},\O_{\B})=1-q,\\
\chi({\B},\O_{\B}(1))=9, &\chi({\B},\O_{\B}(2))=35.
\end{array} \end{equation}
Now, the conditions (\ref{eq: conditions hilbert pol}) determine
 $P_{\B}(t)$
in function of $q$, 
 from which in particular we obtain 
 $\lambda=11-3q$,  $g=5-5q$.
Being $g\geq0$,   we have $(q,\lambda,g)=(0,11,5)$ or $(q,\lambda,g)=(1,8,0)$, 
but the latter case
is impossible by \cite[Theorems~10.2 and 12.1, Remark~12.2]{fujita-polarizedvarieties}.
 Thus we have
\begin{equation}\label{eq: irregularity is 0}
q=0,\ P_{\B}(t)=\left( 11{t}^{3}+21{t}^{2}+16t+6 \right)/6,\ K_S\cdot H_S=-3,\ g=5.
\end{equation}
Applying the main result in \cite{ionescu-degsmallrespectcodim} and the numerical constraints 
 in \cite{besana-biancofiore-deg11} and \cite{besana-biancofiore-numerical}, it follows immediately that
 $\B$ is one of the following:
\begin{enumerate}[(a)] 
 \item\label{part: case 1, 3-fold - proof} the variety $\mathfrak{Q}_{p_1,\ldots,p_5}$; 
 \item\label{part: case 2, 3-fold - proof} a scroll over a surface $Y$, where  $Y$ is either the 
blow-up  of $5$ points in $\PP^2$, or the rational ruled surface $\PP_{\PP^1}(\O\oplus\O(-1))$;
 \item\label{part: case 3, 3-fold - proof} a quadric fibration over $\PP^1$.     
\end{enumerate}
Now, if $\B$ is as in case (\ref{part: case 2, 3-fold - proof}),
we use the well-known 
relation (multiplicativity of the topological Euler characteristic)
 $c_3(\B)=c_1(\PP^1)c_2(Y)$, and by Remark \ref{rem: chern classes} we deduce  
$\Delta=(2c_2(Y)+46)/(7d-1)$.
Moreover, if $Y$ is the blow-up of $5$ points in $\PP^2$, we have 
$c_2(Y)=12\chi(\O_Y)-K_Y^2=12\chi(\O_{\PP^2})-(K_{\PP^2}^2-5)=8$,
while if $Y$ is $\PP_{\PP^1}(\O\oplus\O(-1))$, we have $c_2(Y)=4$.
Thus, if $\B$ is as in case (\ref{part: case 2, 3-fold - proof}), 
we have $Y=\PP_{\PP^1}(\O\oplus\O(-1))$, $d=4$ and $\Delta=2$.
If $\B$ is as in case (\ref{part: case 3, 3-fold - proof}), 
we easily deduce that $c_2(\B)=20$ and hence, again by Remark \ref{rem: chern classes},
we obtain the contradiction $d\Delta=5$.
Now suppose $\B$ as in case (\ref{part: case 1, 3-fold - proof}), namely
 $\B$ is obtained as a sequence 
$$
\B=Z_5\stackrel{\sigma_5}{\longrightarrow}Z_4\stackrel{\sigma_4}{\longrightarrow}\cdots 
\stackrel{\sigma_1}{\longrightarrow} Z_0=Q,
$$
 where $\sigma_j$ is the blow-up at a point $p_j\in Z_{j-1}$, 
$H_{Z_{j}}=\sigma_j^{\ast}(H_{Z_{j-1}})-E_j$, $E_j$ is the exceptional divisor
 and $H_{Z_0}=H_Q=2H_{\PP^4}|_{Q}$.
By \cite[page~609]{griffiths-harris} it follows that 
$c_2(Z_j)=\sigma_j^{\ast}(c_2(Z_{j-1}))$ and hence 
$$ c_2(Z_j)\cdot H_{Z_j}=
 \sigma_j^{\ast}(c_2(Z_{j-1}))\cdot \sigma_j^{\ast}(H_{Z_{j-1}})-\sigma_j^{\ast}(c_2(Z_{j-1}))\cdot E_j=
 c_2(Z_{j-1})\cdot H_{Z_{j-1}}.$$
In particular,  we obtain 
$c_2(\B)\cdot H_{\B}=2c_2(Q)\cdot H_{\PP^4}|_Q=16$. 
On the other hand, by Remark \ref{rem: chern classes}, we obtain 
that $c_2(\B)\cdot H_{\B}=25-d\Delta$, hence $d\Delta=9$.
 \end{case}
\begin{case}[Suppose $K_S\sim 0$] 
By Castelnuovo's bound, since $K_S\cdot H_S=0$, it follows that 
$(\lambda,g)=(12,7)$ and hence also that $\chi(S,\O_{S})=-3\lambda+2g+24=2$ (note that,
 as in the previous case, we know the values of $P_{\B}(1)$ and $P_{\B}(2)$).
We have $q=h^1(S,\O_S)=1-\chi(S,\O_S)+h^2(S,\O_S)=-1+h^2(S,K_S)=0$ and hence 
$S$ is a $K3$-surface, $C$ is a canonical curve
  and $\B$ is a Mukai variety.
Now we denote by $b_j=b_j(\B)$ the $j$-th Betti number of $\B$.
By Poincar\'e-Hopf index formula and Poincar\'e duality 
(see for example \cite{griffiths-harris}) 
we have $c_3(\B)=\sum_{j} (-1)^j b_j= 2+2b_2-b_3$ 
and, by Remark \ref{rem: chern classes}, we also have 
$c_3(\B)=-7d^2+43d-70$. Moreover, by \cite{mori-mukai}, if $b_2\geq 2$ then 
$(b_2,b_3)\in\{(2,12),(2,18),(3,16),(9,0)\}$. Thus, 
if $b_2\geq 2$ we have $b_2=2$, $b_3=18$, $d=2$, $\Delta=4$.
Finally, by \cite{mukai-biregularclassification},
 if $b_2=1$ then $\B$ is a linear section 
of the spinorial variety $S^{10}\subset\PP^{15}$. 
Thus, we have a natural inclusion 
$\iota:H^0(\PP^{15},\I_{S^{10}}(2))\hookrightarrow H^0(\PP^{8},\I_{\B}(2))$
and, since $h^0(\PP^{15},\I_{S^{10}}(2))=h^0(\PP^{8},\I_{\B}(2))$,
we see that $\iota$ is an isomorphism.
This says that $\varphi$ is the restriction of the map 
$\psi:\PP^{15}\dashrightarrow\Q\subset\PP^9$ given in Example \ref{example: d=4 Delta=2}.
\end{case} 
\end{proof}

\begin{remark}[on case (\ref{part: case 2, 3-fold}) 
               of Proposition \ref{prop: 3-fold in P8 - S nonlinear}]
More precisely, from \cite[Proposition~4.2.3]{besana-biancofiore-deg11} it follows 
that $\B=\PP_{\mathbb{F}_1}(\mathcal{E})$,
where $(\mathbb{F}_1,H_{\mathbb{F}_1}):=(\PP_{\PP^1}(\O\oplus\O(-1)),C_0+2f)$ 
(notation as in \cite[page~373]{hartshorne-ag}) 
and $\mathcal{E}$ is a locally free sheaf of rank $2$ 
on $\mathbb{F}_1$, with $c_2(\mathcal{E})=10$.
$\mathbb{F}_1$ is thus the cubic surface of $\PP^4$ with ideal generated by: 
$x_0x_3-x_2x_1, x_0x_4-x_3x_1, x_2x_4-x_3^2$ and it is 
  isomorphic to
$\PP^2$ with one point blown up. 
We point out that
 the problem of the existence of an example for case (\ref{part: case 2, 3-fold}) 
of Proposition \ref{prop: 3-fold in P8 - S nonlinear}
  is essentially reduced to showing that 
such a scroll over $\FF_1$ must be cut out by quadrics. In fact,
by \cite{alzati-fania-ruled} (see also \cite{besana-fania-flamini-f1}) there exists
a smooth irreducible nondegenerate linearly normal $3$-dimensional variety 
 $X\subset\PP^8$  with
 $h^1(X,\O_X)=0$, 
 degree $\lambda=11$, sectional genus $g=5$, having the structure of a 
 scroll $\PP_{\FF^1}(\E)$  with $c_1(\E)=3C_0+5f$ and $c_2(\E)=10$ 
and hence having degrees of the Segre classes 
 $s_1(X)=-85$, $s_2(X)=386$, $s_3(X)=-1330$.
Now, by \cite[Proposition~2]{alzati-russo-subhomaloidal}, $X\subset\PP^8$ is 
arithmetically Cohen-Macaulay
and by Riemann-Roch, denoting with $C$ a general curve section of $X$,
 we obtain
\begin{eqnarray*} 
h^0(\PP^8,\I_X(2)) &=& h^0(\PP^6,\I_C(2))
= h^0(\PP^6,\O_{\PP^6}(2))-h^0(C,\O_C(2)) \\
&=& 28-(2\lambda+1-g) = 10.
\end{eqnarray*}
If the homogeneous ideal of $X$ is generated by quadratic forms or at least if
$X=V(H^0(\I_X(2)))$,
the linear system $|\I_X(2)|$ defines a rational map 
$\psi:\PP^8\dashrightarrow\sS=\overline{\psi(\PP^8)}\subset\PP^{9}$ 
whose base locus is
 $X$ and whose image $\sS$ is nondegenerate.
Now, denoting  with $\pi:\Bl_X(\PP^8)\rightarrow \PP^8$ 
the blow-up of $\PP^8$ along $X$, as in Remark \ref{rem: chern classes}, 
we deduce
\begin{eqnarray*} 
\deg(\psi)\deg(\sS) & = & (2\pi^{\ast}(H_{\PP^8})-E_X)^8 \\ &=&
-s_3(X)-16s_2(X)-112s_1(X)-448\deg(X)+256 = 2,
\end{eqnarray*}
from which 
$\deg(\psi)=1$ and $\deg(\sS)=2$.
\end{remark}
\begin{proposition}\label{prop: 3-fold in P8}
 Let $\varphi:\PP^8\dashrightarrow\overline{\varphi(\PP^8)}=\sS\subset\PP^9$ be
 birational and special of type $(2,d)$, with   
  $\sS$ a hypersurface as in Notation \ref{notation: factorial hypersurface}.
If $r=\dim(\B)=3$,
then either 
case (\ref{part: case 0, 3-fold}), 
case (\ref{part: case 1, 3-fold}), or 
case (\ref{part: case 2, 3-fold}) of 
Proposition \ref{prop: 3-fold in P8 - S nonlinear} 
holds.
\end{proposition}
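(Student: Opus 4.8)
The plan is to obtain Proposition~\ref{prop: 3-fold in P8} as a sharpening of Proposition~\ref{prop: 3-fold in P8 - S nonlinear}: the only feature of Notation~\ref{notation: factorial hypersurface} not already assumed in the latter is the hypothesis (\ref{eq: hypothesis on sing locus hypersurface}), so the entire task reduces to showing that (\ref{eq: hypothesis on sing locus hypersurface}) forbids case (\ref{part: case 0.5, 3-fold}). First I would apply Proposition~\ref{prop: 3-fold in P8 - S nonlinear} verbatim---its hypothesis that $\sS$ is a factorial hypersurface of degree $\Delta\geq2$ is part of Notation~\ref{notation: factorial hypersurface}---to conclude that $\B$ falls into one of the cases (\ref{part: case 0, 3-fold})--(\ref{part: case 2, 3-fold}). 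Since (\ref{part: case 0, 3-fold}), (\ref{part: case 1, 3-fold}) and (\ref{part: case 2, 3-fold}) are precisely the allowed conclusions, it remains only to exclude (\ref{part: case 0.5, 3-fold}) under the extra assumption.

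Next I would extract what (\ref{eq: hypothesis on sing locus hypersurface}) buys in this case. In case (\ref{part: case 0.5, 3-fold}) one has $d=2$, $\Delta=4$, $n=8$; because (\ref{eq: hypothesis on sing locus hypersurface}) now holds, the \emph{full} conclusion of Proposition~\ref{prop: dimension formula} is available and gives $\delta(\B)=0$, $\dim(\B')=6$, and, crucially, that $\B$ is a $QEL$-variety. Being $QEL$ of type $\delta=0$ means that the general entry locus $\Sigma_p(\B)$ is a reduced $0$-dimensional quadric, i.e.\ two points spanning a single secant line through $p$.

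The core step is to contradict this. I would identify the threefold of case (\ref{part: case 0.5, 3-fold}) with the model of Example~\ref{example: B2=singSred-3fold}, a transverse intersection $\PP^2\times\PP^2\cap Q\subset\PP^8$: it is a linearly normal Mukai threefold with $b_2=2$, $b_3=18$, and once this identification is made the transformation $\varphi=\varphi_{|\I_{\B}(2)|}$, hence $\sS$ and $\B'$, is completely pinned down. For such a $\B$ the general entry locus is computed by restricting the two-dimensional entry locus $\PP^1\times\PP^1$ of the Severi variety $\PP^2\times\PP^2$ to $Q$: one obtains a smooth curve of bidegree $(2,2)$, i.e.\ an elliptic quartic in $\PP^3$, through whose general point there pass exactly two secants. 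Thus two secant lines of $\B$ pass through a general point of $\Sec(\B)$, so $\Sigma_p(\B)$ consists of four points and $\B$ is \emph{not} a $QEL$-variety, contradicting the previous paragraph. Equivalently, as Example~\ref{example: B2=singSred-3fold} records directly, this $\varphi$ has $\dim(\B')=5\neq6$ and $(\B')_{\mathrm{red}}=(\sing(\sS))_{\mathrm{red}}$, in plain violation of (\ref{eq: hypothesis on sing locus hypersurface}). Either contradiction excludes (\ref{part: case 0.5, 3-fold}).

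The step I expect to be the main obstacle is the passage from the abstract invariants of case (\ref{part: case 0.5, 3-fold}) to a model concrete enough to read off the entry locus: making precise the projective rigidity of this Mukai threefold, so that one genuinely reduces to the single transformation of Example~\ref{example: B2=singSred-3fold} rather than to an a priori moduli family. To sidestep that rigidity I would prefer the intrinsic formulation---arguing that \emph{any} $\B$ with the invariants of case (\ref{part: case 0.5, 3-fold}) has $\delta(\B)=0$ yet fails to be $QEL$, since two secant lines pass through the general point of $\Sec(\B)$---which already clashes with the $QEL$ assertion forced by (\ref{eq: hypothesis on sing locus hypersurface}) and closes the argument without ever invoking the explicit equations.
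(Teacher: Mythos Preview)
Your overall strategy is sound: the task is precisely to exclude case~(\ref{part: case 0.5, 3-fold}) using (\ref{eq: hypothesis on sing locus hypersurface}), and the contradiction should indeed come from the $QEL$ assertion that Proposition~\ref{prop: dimension formula} then forces. The gap is at the very step you yourself flag as the obstacle.

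Your first route---identifying $\B$ with the explicit $\PP^2\times\PP^2\cap Q$ of Example~\ref{example: B2=singSred-3fold} and reading off its entry locus---requires that \emph{every} linearly normal Mukai threefold in $\PP^8$ with $b_2=2$, $b_3=18$ be projectively of that form, and you have not established this. Your proposed workaround, the ``intrinsic formulation,'' merely \emph{asserts} that any such $\B$ has two secants through a general point of $\Sec(\B)$; you give no mechanism for proving it without the model. That is the missing idea: as written, the argument is circular, claiming exactly the failure of $QEL$ it needs to establish.

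The paper supplies precisely this missing mechanism, and in two independent ways. The first proof invokes the double point formula: for a $QEL$-variety of type $0$ one has
\[
2\deg(\Sec(\B))=\lambda^{2}-\sum_{j=0}^{3}\binom{7}{j}\,s_{3-j}(\mathcal{T}_{\B})\cdot H_{\B}^{j},
\]
and the right-hand side depends only on the Segre classes of $\mathcal{T}_{\B}$, hence only on $\lambda$, $g$ and $c_{3}(\B)=2+2b_2-b_3$. Combined with $\deg(\Sec(\B))=2d-1$ this forces $\Delta=(4d+38)/(7d-1)$, i.e.\ $d=4$, $\Delta=2$, contradicting $d=2$, $\Delta=4$. (Equivalently: with the invariants of case~(\ref{part: case 0.5, 3-fold}) the right-hand side equals $12$, not $2(2\cdot2-1)=6$---this \emph{is} your ``two secants through a general point,'' now proved rather than asserted.) The second proof bounds the degree of the image of the general tangential projection: $\deg(W_{x,\B})=\lambda-8+k\geq4$ while $\deg(W_{x,\B})\leq d$, forcing $d\geq4$. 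Either device is intrinsic in exactly the sense you wanted and closes the argument without any rigidity or moduli statement about the Mukai threefold.
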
 
\begin{proof}
 We have to exclude case (\ref{part: case 0.5, 3-fold}) 
of proposition \ref{prop: 3-fold in P8 - S nonlinear}, so
we just assume that $\B$ is as in this case.
Using the fact that $K_{\B}\sim -H_{\B}$ and $(\lambda,g)=(12,7)$ 
we can compute
the Segre classes of the tangent bundle of $\B$:
\begin{eqnarray*}
 s_1(\mathcal{T}_{\B})\cdot H_{\B}^2 &=& -\lambda =-12 , \\
 s_2(\mathcal{T}_{\B})\cdot H_{\B}   &=&  -24+\lambda=-12 , \\
 s_3(\mathcal{T}_{\B})               &=&  -c_3(\B)+48-\lambda=100-(7d-1)\Delta. \\
\end{eqnarray*}
Since $\B$ is a $QEL$-variety of type $\delta=0$ we apply the
\emph{double point formula} 
(see for example \cite{peters-simonis} and \cite{laksov})
\begin{equation}
 2(2d-1) = 2\deg(\Sec(\B)) 
= \lambda^2 - \sum_{j=0}^{3}{7 \choose j } s_{3-j}(\mathcal{T}_{\B})\cdot H_{\B}^{j} 
= (7d-1)\Delta-40,
\end{equation}
from which we deduce 
$\Delta=(4d+38)/(7d-1)$ i.e. $d=4$ and $\Delta=2$.
\end{proof}
\begin{proof}[Second proof of Proposition \ref{prop: 3-fold in P8}]
Let $x\in\B$ be a general point and put $k=\#(\L_{x,\B})$.
Since the variety $\B$ is not a scroll over a curve  
(otherwise it would happen $\lambda^2\geq (2r+1)\lambda+r(r+1)(g-1)$, by \cite{besana-biancofiore-numerical}) 
and it is defined by
 quadrics, by \cite[Proposition~5.2]{ciliberto-mella-russo}, 
it follows that 
the support of the base locus
 of the tangential projection $\tau_{x,\B}:\B\dashrightarrow W_{x,\B}\subset\PP^4$, 
  i.e. $(T_x(\B)\cap\B)_{\mathrm{red}}$,   consists of $0\leq k< \infty$ lines through  $x$.
Now, by Proposition \ref{prop: dimension formula}, 
$\B$ is a $QEL$-variety of type $\delta=0$ and 
repeating the argument  
in \cite[\S  5]{ciliberto-mella-russo} (keeping also in mind 
 \cite[Theorem~2.3]{ionescu-russo-qel2}) we get  the relation 
$\lambda-8+k=\deg(W_{x,\B})$.
On the other hand, 
by proceeding as in Claim \ref{step: tang proj and entry locus}
or in \cite[Theorem~1.4]{mella-russo-baselocusleq3}, we also obtain
$\deg(W_{x,\B})\leq d$. Hence,  we deduce
\begin{equation}\label{eq: lambda-8+k leq d}
\lambda-8+k\leq d,
\end{equation}
from which the conclusion follows.
\end{proof}
\begin{remark}
Note that in case (\ref{part: case 1, 3-fold}) 
of Proposition \ref{prop: 3-fold in P8 - S nonlinear},
 by (\ref{eq: lambda-8+k leq d}) it follows that
$k=\#(\L_{x,\B})=0$.
We show directly that for a general point 
$x\in\mathfrak{Q}=\mathfrak{Q}_{p_1,\ldots,p_5}$,
we have $\L_{x,\mathfrak{Q}}=\emptyset$.
Suppose by contradiction that there exists $[l]\in\L_{x,\mathfrak{Q}}$.
Then 
$ 0=\dim_{[l]}(\L_{x,\mathfrak{Q}})=H^0(\PP^1,{\mathcal{N}}_{l,\mathfrak{Q}}(-1))=H^0(\PP^1,{{\mathcal{T}}_{\mathfrak{Q}}}|_l(-1))-2= -K_{\mathfrak{Q}}\cdot l -2$,
and hence
\begin{equation}\label{eq: (K+2H)l=0}
(K_{\mathfrak{Q}}+2H_{\mathfrak{Q}})\cdot l = 0.
\end{equation}
Moreover, by \cite[\S  0.3]{ionescu-degsmallrespectcodim}, the adjunction map
 $\psi_{\mathfrak{Q}}$, i.e. the map defined by the complete linear system
 $|K_{\mathfrak{Q}}+2\,H_{\mathfrak{Q}}|$,    is everywhere defined  and we have  
a commutative diagram of adjunction maps
$$
\xymatrix{
 (\mathfrak{Q},H_{\mathfrak{Q}})\ar[r]^{\psi_{\mathfrak{Q}}}  \ar[d]_{\sigma} & \psi_{\mathfrak{Q}}(\mathfrak{Q}) \\ (Q, H_{Q}) \ar[ur]_{\psi_{Q}}
}
$$
where $\sigma$ is the blow-up map and $(Q,H_Q)=(Q^3\subset\PP^4,2\,H_{\PP^4}|_{Q})$.
Now
$ K_{Q}+2\,H_{Q}  \sim (K_{\PP^4}+Q)|_{Q}+2\,(2\,H_{\PP^4})|_{Q}  
 \sim (-5\,H_{\PP^4}+2\,H_{\PP^4}+4\,H_{\PP^4})|_{Q}\sim H_{\PP^4}|_{Q} $,
but this is in contradiction with (\ref{eq: (K+2H)l=0}). 
\end{remark}

\begin{corollary}\label{prop: classification type 2-3 into cubic}
Let $\varphi$ be of type $(2,3)$ and let $\Delta=3$.
Then $\B$ is the variety $\mathfrak{Q}_{p_1,\ldots,p_5}\subset\PP^8$.
\end{corollary}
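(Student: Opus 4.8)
The plan is to obtain the statement as an immediate specialization of the classification of special transformations whose base locus is a threefold, already carried out in \S\ref{sec: small dimension of B}. First I would record the numerical data forced by the hypotheses. Since $d=3$ and $\Delta=3$ are both odd, Proposition \ref{Prop: numerical restrictions}, part \ref{Part: second, numerical restrictions}, applies and yields $\delta=0$, $r=\dim(\B)=\Delta+d-3=3$ and $n=2(\Delta+d-2)=8$. Thus $\varphi$ is exactly a transformation of the shape studied in that section, namely $\varphi:\PP^8\dashrightarrow\sS\subset\PP^9$ with $\sS$ a factorial cubic hypersurface and $\dim(\B)=3$.

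Next I would invoke Proposition \ref{prop: 3-fold in P8}: under precisely these hypotheses (a special transformation of type $(2,d)$ into a factorial hypersurface $\sS$ with $\dim(\B)=3$), the base locus $\B$ falls into one of the three cases \ref{part: case 0, 3-fold}, \ref{part: case 1, 3-fold} or \ref{part: case 2, 3-fold} of Proposition \ref{prop: 3-fold in P8 - S nonlinear}. It then only remains to read off which of these is compatible with the data $d=3$, $\Delta=3$. Cases \ref{part: case 0, 3-fold} and \ref{part: case 2, 3-fold} both have $(d,\Delta)=(4,2)$, so they are excluded; only case \ref{part: case 1, 3-fold}, with $(d,\Delta)=(3,3)$, survives. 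This forces $\lambda=11$, $g=5$ and $\B=\mathfrak{Q}_{p_1,\ldots,p_5}\subset\PP^8$, which is the assertion. One could equally well start from Proposition \ref{prop: 3-fold in P8 - S nonlinear} directly, since the remaining case \ref{part: case 0.5, 3-fold} there has $(d,\Delta)=(2,4)$ and is incompatible with $d=3$ as well; the passage through Proposition \ref{prop: 3-fold in P8} is only needed to discard case \ref{part: case 0.5, 3-fold} when $(d,\Delta)=(2,4)$, a situation that does not arise here.

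There is essentially no obstacle left at this point: all of the genuine work — bounding $\lambda\le 12$ by the Castelnuovo-type and linear-section arguments, computing the Hilbert polynomial, and matching against the classifications of low-degree threefolds and of Mukai varieties — has already been done in Propositions \ref{prop: 3-fold in P8 - S nonlinear} and \ref{prop: 3-fold in P8}. The corollary is the arithmetic selection of the unique entry of that list with $(d,\Delta)=(3,3)$. If anything merits a line of justification it is only the initial reduction to $(r,n)=(3,8)$, i.e. the appeal to Proposition \ref{Prop: numerical restrictions}; everything else is bookkeeping. For the existence of such a $\varphi$ one may point to Example \ref{example: d=3 Delta=3}, which realizes it explicitly.
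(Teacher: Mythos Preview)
Your proposal is correct and follows essentially the same route as the paper: reduce to $(r,n,\delta)=(3,8,0)$ via Proposition \ref{Prop: numerical restrictions} (the paper also cites Proposition \ref{prop: dimension formula} for the $QEL$ property, but this is not needed for the conclusion), then invoke Proposition \ref{prop: 3-fold in P8} and select the only entry with $(d,\Delta)=(3,3)$. Your remark that Proposition \ref{prop: 3-fold in P8 - S nonlinear} alone already suffices here, since case \ref{part: case 0.5, 3-fold} has $(d,\Delta)=(2,4)$, is also correct.
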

\begin{proof}
By Propositions \ref{prop: dimension formula} and \ref{Prop: numerical restrictions},
 $\B$ is a $QEL$-variety of type $\delta=0$ and dimension $3$ in $\PP^{8}$.
Hence we apply Proposition \ref{prop: 3-fold in P8}.
\end{proof}

\subsection{Summary table}
 Table \ref{tab: cases with dimension leq 3} 
classifies 
all special transformations 
as in Notation \ref{notation: factorial hypersurface} and with $r\leq 3$. 
\begin{table}[htbp]
\begin{center}
\begin{tabular}{|c|c|c|c|c|c|c|c|}
\hline
 $r$ & $n$ & $\Delta$ & $d$ & $\delta$ & $\lambda$  & Abstract structure of $\B$ & Examples\\
\hline
\hline
 $1$ & $4$ & $2$      &  $2$& $0$      & $4$       & $(\PP^1,\O(4))$ & exist\\
\hline
 $2$ & $6$ & $2$      &  $3$& $0$      & $7$       & Edge variety & exist\\
\hline
 $3$ & $7$ & $2$      &  $2$& $1$      & $6$       & Hyperplane section  of $\PP^2\times\PP^2\subset\PP^8$ & exist\\
\hline
 $3$ & $8$ & $2$      & $4$ & $0$     & $12$       & Linear section of $S^{10}\subset\PP^{15}$ & exist\\
\hline
$3$ & $8$ & $3$      & $3$ & $0$     & $11$        &  $\mathfrak{Q}_{p_1,\ldots,p_5}$ & exist \\
\hline
$3$ & $8$ & $2$      & $4$ & $0$     & $11$        &  Scroll over $\PP_{\PP^1}(\O\oplus \O(-1))$ & not know\\
\hline
\end{tabular}
\end{center}
\caption{Cases with $r\leq3$.}
\label{tab: cases with dimension leq 3}
\end{table}
We point out that in \cite{note2}
 we have extended this table
in the case in which $\sS$ is not necessarily a hypersurface.

\section{Invariants of transformations of
type \texorpdfstring{$(2,2)$}{(2,2)}
into a cubic and a quartic}\label{sec: invariants 2-2}

\begin{remark}\label{rem: LxB linearly normal}
Let $\delta\geq 3$ and consider $\L_{x,\B}\subset\PP^{r-1}$, 
where  $x\in\B$ is a general point. 
By \cite[Theorem~2.3]{russo-qel1} and
\cite[Corollary~1.6]{russo-linesonvarieties}
it follows that 
$\L_{x,\B}$ is a smooth irreducible nondegenerate
variety  
of codimension $(r-\delta+2)/2$ and it is scheme-theoretic intersection of
 quadrics.
Then, applying \cite[Corollary~2]{bertram-ein-lazarsfeld}, we get that $\L_{x,\B}$ is
linearly normal.
\end{remark}
In  Propositions \ref{prop: invariants d=2 Delta=3} and \ref{prop: invariants d=2 Delta=4} we write 
$P=a_0,a_1,\ldots, a_r$ to indicate that 
$
P_{\B}(t)=a_0 { t \choose r } + a_1 { t \choose r-1 } + \cdots + a_r 
$.
\begin{proposition}\label{prop: invariants d=2 Delta=3} Let $\varphi$ be of type $(2,2)$ 
and let $\Delta=3$. 
Then
$\B$ is a $QEL$-variety of type $\delta$ 
and a Fano variety of the first species 
of  coindex $c$,   as one of the following cases: 
\begin{enumerate}[(i)]
 \item\label{part: first, invariants d=2 Delta=3} $n=18$,   $r=10$,   $\delta=4$,   $c=4$, 
$P=$ $34$, $272$, $964$, $1988$, $2633$, $2330$, $1387$, $544$, $133$, $18$, $1$;
 for the general point $x\in\B$, $\L_{x,\B}\subset\PP^9$ is projectively equivalent to 
$\PP^1\times\PP^4\subset\PP^9$.
\item\label{part: second, invariants d=2 Delta=3} $n=24$,   $r=14$,   $\delta=6$,   $c=5$, 
$P=$ $80$, $920$, $4866$, $15673$, $34302$, $53884$, $62541$, $54366$, $35472$, $17228$, $6104$, $1521$, $250$, $24$, $1$; 
 for the general point $x\in\B$, $\L_{x,\B}\subset\PP^{13}$ is 
projectively equivalent to a smooth $8$-dimensional linear section  
  of $S^{10}\subset\PP^{15}$.
\end{enumerate}
\end{proposition}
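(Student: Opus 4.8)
The plan is to make everything depend on the single integer $\delta=\delta(\B)$, constrain $\delta$ by parity, a numerical obstruction, and divisibility, and then read off all the stated invariants from the $QEL$-machinery already set up. Specializing Proposition~\ref{prop: dimension formula} to $d=2$, $\Delta=3$ gives $r=2\delta+2$ and $n=3\delta+6$, so $\B\subset\PP^{3\delta+6}$ is a smooth $QEL$-variety with $r+\delta=3\delta+2$. For $\delta>0$, Proposition~\ref{prop: cohomology properties}(\ref{part: B is fano}) says $\B$ is either a Fano variety of the first species with $2\,i(\B)=r+\delta=3\delta+2$, or a hyperplane section of $\PP^2\times\PP^2\subset\PP^8$; the latter forces $r=3$, incompatible with $r=2\delta+2$, so the former holds and $3\delta+2$ is even, i.e. $\delta$ is even. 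The value $\delta=0$ gives $(r,n)=(2,6)$, which Proposition~\ref{prop: 2-fold in P6} rules out (it forces $(d,\Delta)=(3,2)$ there). Hence $\delta$ is even and positive.

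The key step is the exclusion of $\delta=2$, for which both the parity argument (since $2$ is even) and the Divisibility Theorem (which needs $\delta\ge3$) are silent. Here $(n,r,i(\B))=(12,6,4)$ and the coindex is $c=3\le5$, so Proposition~\ref{prop: cohomology properties}(\ref{part: hilbert polynomial}) over-determines $P_{\B}(t)$: it is a polynomial of degree $6$, even about $t=-2$ because $P_{\B}(t)=P_{\B}(-t-4)$, subject to $P_{\B}(-2)=P_{\B}(-1)=0$, $P_{\B}(0)=1$ and $P_{\B}(1)=n+1=13$, and \emph{independently} to $P_{\B}(2)=(n^2+n-2)/2=77$. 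I would solve the first four conditions for the three free coefficients and then check the last one: they force $P_{\B}(2)=76$, contradicting $77$. This numerical over-determination, rather than a classification theorem, is what excludes $\delta=2$.

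For even $\delta\ge4$ the Divisibility Theorem \cite[Theorem~2.8]{russo-qel1} reads $\delta+2\equiv0\pmod{2^{\lfloor(\delta-1)/2\rfloor}}$, which holds only for $\delta=4$ and $\delta=6$; these give exactly $(n,r,\delta,c)=(18,10,4,4)$ and $(24,14,6,5)$. In both cases $c\le5$, so Proposition~\ref{prop: cohomology properties}(\ref{part: hilbert polynomial}) determines $P_{\B}(t)$, and the analogue of the computation above is now consistent; carrying it out yields the two Hilbert polynomials listed in the statement.

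Finally, for a general $x\in\B$ I would compute $\L_{x,\B}$ exactly as in Claim~\ref{step: smooth image tangential projection}: by \cite[Theorem~2.3]{russo-qel1} it is a smooth $QEL$-variety of dimension $i(\B)-2$ and type $\delta-2$, nondegenerate and, by Remark~\ref{rem: LxB linearly normal}, linearly normal. For $\delta=4$ this is a $5$-dimensional $QEL$-variety of type $2$ in $\PP^9$, and for $\delta=6$ an $8$-dimensional $QEL$-variety of type $4$ in $\PP^{13}$; the classification \cite[Corollary~3.1]{russo-qel1} then identifies them with $\PP^1\times\PP^4$ and with a smooth codimension-two linear section of $S^{10}\subset\PP^{15}$, respectively. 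The main obstacle is the $\delta=2$ exclusion: since neither parity nor divisibility applies there, one must squeeze a genuine contradiction out of the forced Hilbert polynomial, and the discrepancy $P_{\B}(2)=76$ versus $77$ is what makes the argument go through.
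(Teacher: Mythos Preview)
Your argument is correct and follows essentially the same route as the paper: reduce to the list $(n,r,\delta)\in\{(6,2,0),(12,6,2),(18,10,4),(24,14,6)\}$ via the dimension formula, parity of $i(\B)$, and the Divisibility Theorem; exclude $\delta=0$ by Proposition~\ref{prop: 2-fold in P6}; exclude $\delta=2$ by the over-determination of the Hilbert polynomial (your computation $P_{\B}(2)=76\neq 77$ is exactly the ``incompatible conditions'' the paper alludes to without spelling out); and then read off the Hilbert polynomials and $\L_{x,\B}$ from the $QEL$-machinery. The only cosmetic difference is in the references cited for identifying $\L_{x,\B}$: for case~(\ref{part: first, invariants d=2 Delta=3}) the paper invokes \cite[Theorem~2.2]{ionescu-russo-conicconnected} (since $\L_{x,\B}$ there has type $2$, below the threshold of the Divisibility Theorem), and for case~(\ref{part: second, invariants d=2 Delta=3}) it cites \cite{mukai-biregularclassification} or \cite[Corollary~3.2]{russo-qel1}, whereas you appeal to \cite[Corollary~3.1]{russo-qel1} for both.
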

\begin{proof}
By Proposition \ref{prop: dimension formula} and Proposition \ref{prop: cohomology properties} 
parts \ref{part: B is linearly normal} and \ref{part: B is fano}, 
and applying   \cite[Theorem~2.2]{ionescu-russo-conicconnected} and 
\cite[Theorem~2.8]{russo-qel1},
it follows that
 $\B$ is a $QEL$-variety of type $\delta$ and 
$$(n,r,\delta)\in\{(6,2,0),
(12,6,2), (18,10,4), (24,14,6)\}.$$
 The tern $(6,2,0)$ is excluded by Proposition  \ref{prop: 2-fold in P6}; the tern 
$(12,6,2)$ is excluded since otherwise 
by Proposition \ref{prop: cohomology properties} part \ref{part: hilbert polynomial},
we would get incompatible conditions 
for $P_{\B}(t)$. 
The statement on $\L_{x,\B}$, in the case (\ref{part: first, invariants d=2 Delta=3}) 
follows from
\cite[Theorem~2.2]{ionescu-russo-conicconnected}, 
while in the case (\ref{part: second, invariants d=2 Delta=3}) 
it follows  from \cite{mukai-biregularclassification} or \cite[Corollary~3.2]{russo-qel1}.
\end{proof}

\begin{proposition}\label{prop: invariants d=2 Delta=4}
Let $\varphi$ be of type $(2,2)$ and let $\Delta=4$. 
 Then $\B$ 
is a $QEL$-variety of type $\delta$ 
and a Fano variety 
of the first species
of coindex $c$, as one of the following cases:
\begin{enumerate}[(i)]
\item\label{part: first, invariants d=2 Delta=4} $n=17$,   $r=9$,   $\delta=3$,   $c=4$, 
$P=$ $35$, $245$, $747$, $1297$, $1406$, $980$, $435$, $117$, $17$, $1$;
 for the general point $x\in\B$, $\L_{x,\B}\subset\PP^{8}$ is 
projectively equivalent to 
$\PP_{\PP^1}(\O(1)\oplus\O(1)\oplus\O(1)\oplus\O(2))\subset\PP^8$.
\item\label{part: second, invariants d=2 Delta=4} $n=23$,   $r=13$,   $\delta=5$,   $c=5$, 
$P=$ $82$, $861$, $4126$, $11932$, $23195$, $31943$, $31984$, $23504$, $12628$, $4875$, $1306$, $228$, $23$, $1$; 
  for the general point  $x\in\B$, $\L_{x,\B}\subset\PP^{12}$ is 
projectively equivalent to a 
 smooth $7$-dimensional linear section  of $S^{10}\subset\PP^{15}$.
\end{enumerate}
\end{proposition}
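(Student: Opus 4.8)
The argument is entirely parallel to that of Proposition \ref{prop: invariants d=2 Delta=3}. First I would record the numerics: putting $d=2$, $\Delta=4$ in Proposition \ref{prop: dimension formula} gives $r=2\delta+3$ and $n=3\delta+8$, so $\B\subset\PP^{3\delta+8}$ is a $QEL$-variety of type $\delta$, nondegenerate and projectively normal by Proposition \ref{prop: cohomology properties} part \ref{part: B is linearly normal}. Suppose $\delta>0$. By Proposition \ref{prop: cohomology properties} part \ref{part: B is fano}, either $\B$ is a hyperplane section of $\PP^2\times\PP^2\subset\PP^8$---which is impossible, since such a section has $r=3$, forcing $\delta=0$---or $\B$ is a Fano variety of the first species of index $i(\B)=(r+\delta)/2=3(\delta+1)/2$. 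As the index must be a positive integer, $\delta$ is necessarily \emph{odd}; the coindex is then $c(\B)=r+1-i(\B)=(\delta+5)/2$.

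I would next bound $\delta$. For $\delta\geq 3$ the Divisibility Theorem \cite[Theorem~2.8]{russo-qel1} gives $r\equiv\delta\ \mathrm{mod}\ 2^{\lfloor(\delta-1)/2\rfloor}$, i.e. $\delta+3\equiv 0\ \mathrm{mod}\ 2^{\lfloor(\delta-1)/2\rfloor}$; combined with $\delta$ odd this leaves only $\delta\in\{3,5\}$ (indeed $\delta=7$ fails since $10\not\equiv 0\ \mathrm{mod}\ 8$, while for odd $\delta\geq 9$ the modulus already exceeds the positive integer $\delta+3$). There remain the two small values $\delta=0$ and $\delta=1$. The case $\delta=0$ gives $(r,n)=(3,8)$, which is precisely case (\ref{part: case 0.5, 3-fold}) of Proposition \ref{prop: 3-fold in P8 - S nonlinear} and is ruled out by Proposition \ref{prop: 3-fold in P8}. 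For $\delta=1$ we have $(r,n,i)=(5,11,3)$ and coindex $c=3\leq 5$, so Proposition \ref{prop: cohomology properties} part \ref{part: hilbert polynomial} forces $P_{\B}(t)$; imposing the symmetry $P(t)=(-1)^r P(-t-i)$ together with $P(-1)=0$, $P(0)=1$, $P(1)=n+1=12$ and $P(2)=(n^2+n-2)/2=65$ gives an over-determined linear system in the coefficients that one checks to be inconsistent, a contradiction. Hence $\delta\in\{3,5\}$, i.e. $(n,r,\delta,c)\in\{(17,9,3,4),(23,13,5,5)\}$.

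For each surviving value the coindex satisfies $c\leq 5$, so Proposition \ref{prop: cohomology properties} part \ref{part: hilbert polynomial} determines $P_{\B}(t)$ uniquely; rewriting it in the binomial basis ${t\choose r},\ldots,{t\choose 0}$ yields the two coefficient strings in the statement. For the assertion on $\L_{x,\B}$ I would invoke \cite[Theorem~2.3]{russo-qel1} (see also Remark \ref{rem: LxB linearly normal}): since $\B$ is a smooth $QEL$-variety of type $\delta\geq 3$, its scheme of lines $\L_{x,\B}\subset\PP^{r-1}$ through a general point is a smooth nondegenerate $QEL$-variety of dimension $(r+\delta-4)/2$ and type $\delta-2$. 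In case (\ref{part: first, invariants d=2 Delta=4}) this is a $4$-fold of type $1$ in $\PP^8$, identified with $\PP_{\PP^1}(\O(1)\oplus\O(1)\oplus\O(1)\oplus\O(2))$ through the conic-connected classification \cite[Theorem~2.2]{ionescu-russo-conicconnected}; in case (\ref{part: second, invariants d=2 Delta=4}) it is a $7$-fold of type $3$ in $\PP^{12}$, which by \cite{mukai-biregularclassification} (or \cite[Corollary~3.2]{russo-qel1}) is a smooth linear section of $S^{10}\subset\PP^{15}$. I expect the one genuinely delicate point to be the exclusion of $\delta=1$: the Divisibility Theorem is silent there and the parity test does not kill it, so everything rests on the numerical incompatibility of the forced Hilbert polynomial---exactly as the tern $(12,6,2)$ was discarded in Proposition \ref{prop: invariants d=2 Delta=3}.
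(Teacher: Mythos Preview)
Your proof is correct and follows essentially the same route as the paper's: reduce to the finite list $(n,r,\delta)\in\{(8,3,0),(11,5,1),(17,9,3),(23,13,5)\}$ via the dimension formulas, the parity of the index, and the Divisibility Theorem; kill $\delta=0$ with Proposition \ref{prop: 3-fold in P8} and $\delta=1$ by the Hilbert-polynomial inconsistency from Proposition \ref{prop: cohomology properties} part \ref{part: hilbert polynomial}; then read off $P_{\B}(t)$ and identify $\L_{x,\B}$ through \cite[Theorem~2.2]{ionescu-russo-conicconnected} and \cite{mukai-biregularclassification}. The only addition in the paper is the parenthetical remark that in case (\ref{part: second, invariants d=2 Delta=4}) one computes $g(\L_{x,\B})=7$ via Kodaira vanishing and Serre duality before invoking Mukai, a detail your citation of \cite[Corollary~3.2]{russo-qel1} sidesteps.
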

\begin{proof}
 As in the proof  of Proposition \ref{prop: invariants d=2 Delta=3}, we get that $\B$ is a 
$QEL$-variety of type $\delta$ and
dimension $r$ with
$$(n,r,\delta)\in\{(8,3,0), (11,5,1), (17,9,3), (23,13,5)\}.$$
The case with $\delta=0$ is excluded by  Proposition \ref{prop: 3-fold in P8};
the case with $\delta=1$ is excluded by  Proposition 
\ref{prop: cohomology properties} part \ref{part: hilbert polynomial}; 
by the same Proposition, we get the expression of the Hilbert polynomials in the 
cases
 with $\delta\geq3$.     
Finally, the statement on $\L_{x,\B}$, in the case (\ref{part: first, invariants d=2 Delta=4}) 
follows from \cite[Theorem~2.2]{ionescu-russo-conicconnected}, 
while in the case (\ref{part: second, invariants d=2 Delta=4}) it follows  
from \cite{mukai-biregularclassification}
(for the latter case, by  Kodaira Vanishing Theorem  and  Serre Duality, 
we get $g(\L_{x,\B})=7$).   
\end{proof}

\section{Complements}\label{sec: nonspecial case}
In this section we treat the nonspecial case when
 $n\leq 4$. 
Precisely, we keep the following notation:
\begin{notation}\label{notation: nonspecial case}
 Let $\varphi:\PP^n\dashrightarrow\overline{\varphi(\PP^n)}=\Q\subset\PP^{n+1}$ be
a quadratic birational transformation into 
an irreducible (hence normal, 
by \cite[\Rmnum{1} Exercise~5.12, \Rmnum{2} Exercise~6.5]{hartshorne-ag}) quadric 
 hypersurface $\Q$ and
moreover suppose that  its base locus $\emptyset\neq\B\subset\PP^n$ is reduced. 
\end{notation}

\begin{lemma}\label{prop: degenerate component}
Let
$X\subseteq\B$ be a degenerate irreducible component 
 of $\B$.
\begin{enumerate}
\item If $\mathrm{codim}_{\PP^n}(X)=2$   then $\deg(X)\leq2$ and, 
if $\deg(X)=2$ then $X=\B$ and $\B$ is a quadric.
\item If $\mathrm{codim}_{\PP^n}(X)=3$   then $\deg(X)\leq4$ and, 
if $\deg(X)=4$ then we have $h^0(\PP^n,\I_{X}(2))=h^0(\PP^n,\I_{\B}(2))+1$.
\end{enumerate}
\end{lemma}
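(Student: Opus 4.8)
The plan is to reduce everything to a statement about quadrics on the hyperplane containing $X$ and then invoke B\'ezout. First I would record what comes for free from the setup. Since $\Q$ is an irreducible (hence normal) nonlinear hypersurface, Lemma~\ref{prop: cohomology I2B} gives $h^0(\PP^n,\I_{\B}(2))=n+2$, and its proof shows that the linear system defining $\varphi$ is the complete system $|\I_{\B}(2)|$, so that $\B=\mathrm{Bs}|\I_{\B}(2)|$ (reduced, by hypothesis). Write $c=\mathrm{codim}_{\PP^n}(X)\in\{2,3\}$ and choose a linear form $f$ with $X\subseteq H:=V(f)\cong\PP^{n-1}$. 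Since $X\subseteq H$, the restriction sequence for $\O(2)$ yields $h^0(\PP^n,\I_{X}(2))=(n+1)+h^0(H,\I_{X,H}(2))$, the kernel $f\cdot H^0(\O_{\PP^n}(1))$ accounting for the summand $n+1$. Restricting the quadrics through $\B$ to $H$ produces a linear system $\Sigma$ of quadrics on $H$ whose base locus is $\B\cap H$; as $X$ is an irreducible component of $\B$ lying in $H$, it is an irreducible component of $\B\cap H=\mathrm{Bs}(\Sigma)$, now of codimension $c-1$ in $H\cong\PP^{n-1}$.

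For the degree bounds I would apply B\'ezout on $H$. Because $X$ is a codimension-$(c-1)$ component of $\mathrm{Bs}(\Sigma)$, the system $\Sigma$ must contain at least $c-1$ independent members (otherwise every component of its base locus would have codimension $<c-1$), and $c-1$ general members $Q_1,\dots,Q_{c-1}$ of $\Sigma$ cut out $X$ as an irreducible component of the complete intersection $V(Q_1,\dots,Q_{c-1})\subseteq H$, of codimension $c-1$ and degree $2^{\,c-1}$. Hence $\deg(X)\le 2^{\,c-1}$, that is $\deg(X)\le 2$ when $c=2$ and $\deg(X)\le 4$ when $c=3$.

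For the equality statements I would exploit the borderline cases. If $c=2$ and $\deg(X)=2$, then $X$ is an irreducible quadric hypersurface of $H$, so every quadric of $H$ vanishing on $X$ is proportional to its equation, giving $h^0(H,\I_{X,H}(2))=1$ and thus $h^0(\PP^n,\I_{X}(2))=n+2=h^0(\PP^n,\I_{\B}(2))$; since $\I_{\B}\subseteq\I_{X}$, the two spaces of sections coincide, the complete systems $|\I_{\B}(2)|$ and $|\I_{X}(2)|$ agree, and therefore $\B=\mathrm{Bs}|\I_{\B}(2)|=\mathrm{Bs}|\I_{X}(2)|=X$. The last equality is checked directly: the quadrics through $X$ are spanned by $f\cdot H^0(\O_{\PP^n}(1))$ and one quadric $G$ restricting to the equation of $X$, and their common zero locus is $V(f)\cap V(G)=X$, a quadric. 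If instead $c=3$ and $\deg(X)=4$, then equality in the B\'ezout bound forces the degree-$4$ complete intersection $V(Q_1,Q_2)\subseteq H$ to be reduced with $X$ as its only component, whence $X=V(Q_1,Q_2)$; the saturated ideal of this complete intersection is $(Q_1,Q_2)$, so $h^0(H,\I_{X,H}(2))=2$ and $h^0(\PP^n,\I_{X}(2))=(n+1)+2=n+3=h^0(\PP^n,\I_{\B}(2))+1$, as claimed.

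The main obstacle I anticipate is the B\'ezout step: making precise that $X$ really is an irreducible component of the base locus of the restricted system $\Sigma$, and that $c-1$ general members of $\Sigma$ meet in a codimension-$(c-1)$ complete intersection having $X$ as a reduced component. This needs a careful genericity argument — successively cutting with general members so that each newly appearing component either lies in the base locus or drops in dimension by exactly one — together with the bound $\dim\Sigma\ge c-1$ recorded above. Once this is granted, the degree computation and the two cohomological identities are routine.
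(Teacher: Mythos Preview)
Your proposal is correct and follows essentially the same approach as the paper: restrict the quadrics through $\B$ to the hyperplane $H\supseteq X$, bound $\dim\Sigma$ from below using $h^0(\I_\B(2))=n+2$ and $\dim\ker\le n+1$, then apply B\'ezout inside $H$ and read off the equality cases from $h^0(H,\I_{X,H}(2))$. The paper phrases this via the explicit map $u(F)=F|_{x_n=0}$ rather than in linear-system language, and it glosses over the genericity issue you flag (it simply asserts that $X$ lies in ``the complete intersection $V(\bar F,\bar F')$''), so your treatment of that step is, if anything, more careful than the paper's.
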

\begin{proof}
We can choose coordinates $x_0,\ldots,x_n$ on $\PP^n$ such that
$X\subset V(x_n)\subset\PP^n$ 
and we consider the 
restriction 
map
$u:H^0(\PP^n,\I_{\B}(2)) \rightarrow H^0(\PP^{n-1},\I_{X}(2))$,
defined by
$ 
u(F(x_0,\ldots,x_{n}))=F(x_0,\ldots,x_{n-1},0)$.
We remark that for every $F\in H^0(\PP^n,\I_{\B}(2))$ we have
$
F\in \langle u(F) \rangle \oplus \langle x_0x_n,\ldots,x_{n-1}x_{n},x_n^2 \rangle
$
and, in particular, $H^0(\PP^n,\I_{\B}(2))\subseteq 
\mathrm{Im}(u)\oplus \langle x_0x_n,\ldots,x_{n-1}x_{n},x_n^2 \rangle$.
Thus $\dim(\mathrm{Im}(u))\geq 1$ and, 
if $\dim(\mathrm{Im}(u))=1$ then   $\B$ is a quadric hypersurface in  $\PP^{n-1}$.     
Now suppose $\mathrm{codim}_{\PP^n}(X)=2$.     
From the above, there exists $\bar{F}\in H^0(\PP^{n-1},\I_{X}(2))$ and
$X$ has to be an irreducible component of $V(\bar{F})$.     
It follows that 
$\deg(X)\leq \deg(\bar{F})=2$ and, if $\deg(X)=2$ then  
$X=V(\bar{F})$ and 
$h^0(\PP^{n-1},\I_{X}(2))=\dim(\mathrm{Im}(u))= 1$.
 Suppose $\mathrm{codim}_{\PP^n}(X)=3$. 
 From the above, there exist  
 $\bar{F},\bar{F'}\in H^0(\PP^{n-1},\I_{X}(2))$ which are linearly independent
 and $X$ 
has to be contained in the complete intersection 
$V(\bar{F},\bar{F'})$.     
It follows that 
$\deg(X)\leq4$ and, if $\deg(X)=4$ then 
$h^0(\PP^{n-1},\I_{X}(2))= \dim(\mathrm{Im}(u))= 2$.
\end{proof}

\begin{proposition}\label{prop: P3}
If $n=3$,  
then either
\begin{enumerate}[(i)]
 \item\label{item: 1, P3} $\varphi$ is of type $(2,1)$,   or
 \item\label{item: 2, P3} $\varphi$ is of type $(2,2)$,   $\rk(\Q)=4$
 and $\B$ is the union of
a line  $r$ with two points $p_1,p_2$   such that
 $\langle p_1, p_2\rangle\cap r=\emptyset$.
\end{enumerate}
\end{proposition}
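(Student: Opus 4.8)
The plan is to split according to whether $\B$ is degenerate, using that $h^0(\PP^3,\I_\B(2))=5$ by Lemma \ref{prop: cohomology I2B}. Since $\Q$ is irreducible, hence normal and nonlinear, both Lemma \ref{prop: cohomology I2B} and Proposition \ref{prop: type 2-1} apply. If $\B$ is degenerate, i.e. $h^0(\PP^3,\I_\B(1))\neq 0$, then Proposition \ref{prop: type 2-1} gives at once that $\varphi$ is of type $(2,1)$, which is case (\ref{item: 1, P3}). The substance of the proof is therefore to show that a \emph{nondegenerate} $\B$ is a line together with two points whose join is disjoint from it, and that $\varphi$ is then of type $(2,2)$ into a rank-$4$ quadric.

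First I would analyze the irreducible components of a nondegenerate $\B$. No component can be a surface (a degree-$2$ surface would force all five independent quadrics to be proportional to its equation, while a higher-degree surface lies on no quadric), so $\dim\B\leq 1$. For a one-dimensional component: if it is degenerate, Lemma \ref{prop: degenerate component}(1) forces its degree to be $\leq 2$, and degree $2$ would make $\B$ itself a degenerate conic, contradicting nondegeneracy; hence degenerate curve components are lines. A nondegenerate irreducible curve component lies on at most three independent quadrics (the extremal case being the twisted cubic), forcing $h^0(\I_\B(2))\leq 3$, a contradiction. Thus every one-dimensional component is a line, and $\B$ can contain at most one: two skew lines lie on a four-dimensional, not five-dimensional, system of quadrics, while two coplanar lines, being degenerate, would require an extra point off their plane to make $\B$ nondegenerate, again dropping $h^0$ below $5$.

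Next I would rule out $\dim\B=0$. Assuming $\B$ is a finite reduced nondegenerate set cut out by the five quadrics, three general members of $|\I_\B(2)|$ meet in a complete intersection of eight points, namely $\B$ together with a nonempty residual $Z$. The Cayley--Bacharach relation in socle degree $2$ reads $h^0(\I_\B(2))-3=\deg(Z)-1$, whence $\deg(Z)=3$ and $\deg(\B)=5$; but a reduced base scheme of five points produces an image of degree $2^3-5=3$, a cubic rather than a quadric. Hence $\dim\B=1$, and by the component analysis $\B=r\cup\{p_1,\ldots,p_m\}$ with $r$ a line and the $p_i$ distinct points off $r$. Since $h^0(\I_r(2))=7$ while $h^0(\I_\B(2))=5$, the points impose exactly two conditions on the quadrics through $r$. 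Nondegeneracy forces some pair to satisfy $\langle p_i,p_j\rangle\cap r=\emptyset$ (otherwise every point lies in the plane $\langle r,p_1\rangle$); as the five quadrics through $r\cup\{p_i,p_j\}$ already cut out exactly $r\cup\{p_i,p_j\}$, any further point would impose a third condition and push $h^0$ below $5$. Therefore $m=2$ and $\langle p_1,p_2\rangle\cap r=\emptyset$.

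Finally, all such configurations are projectively equivalent, so I may take $r=V(x_0,x_1)$, $p_1=[1,0,0,0]$, $p_2=[0,1,0,0]$; then $|\I_\B(2)|=\langle x_0x_1,x_0x_2,x_0x_3,x_1x_2,x_1x_3\rangle$, and $\varphi$ is explicitly birational onto $\Q=V(y_1y_4-y_2y_3)$, a quadric of rank $4$ with quadratic inverse, giving case (\ref{item: 2, P3}). The main obstacle is the exclusion of the zero-dimensional base locus: a naive degree bound is inconclusive because blowing up a special $0$-dimensional base scheme need not resolve $\varphi$, so it is the Cayley--Bacharach computation that cleanly forces $\deg\B=5$ and hence a cubic image; the other delicate point, controlling the number of isolated points in the one-dimensional case, is handled by the base-locus argument above.
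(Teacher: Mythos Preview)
Your proof is correct and, in one respect, more complete than the paper's. The overall strategy—decompose $\B$ into irreducible components, use Lemma~\ref{prop: degenerate component} and the quadric count $h^0(\PP^3,\I_\B(2))=5$ to pin down the geometry, then normalize coordinates—is the same as in the paper. The difference lies in how the purely zero-dimensional case is excluded.

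The paper's proof passes directly from ``every curve component is degenerate'' to ``$\B$ is a conic or a line with two points,'' without explicitly treating the possibility that $\B$ consists only of isolated points. Your Cayley--Bacharach argument fills this in cleanly: three general members of $|\I_\B(2)|$ cut out a length-$8$ complete intersection, the socle-degree relation $h^0(\I_\B(2))-h^0(\I_\Gamma(2))=\deg(Z)-1$ forces $\deg(\B)=5$, and then the intersection number $(2H-\sum E_i)^3=8-5=3$ on the blow-up (valid because $\B$ is reduced, so the blow-up already resolves $\varphi$) contradicts $\deg(\Q)=2$. This is a genuine addition; Lemma~\ref{prop: isolated point} alone only gives $\rk(\Q)\le 4$, which is not yet a contradiction here since case~(\ref{item: 2, P3}) allows $\rk(\Q)=4$.

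Your component analysis in the one-dimensional case is also slightly more explicit than the paper's (you spell out why two lines are impossible and why the join $\langle p_1,p_2\rangle$ must miss $r$, using that the five quadrics through $r\cup\{p_1,p_2\}$ already cut out exactly this locus). The paper compresses all of this into one sentence invoking Lemma~\ref{prop: degenerate component} and the condition count for a line. Both arrive at the same normal form and the same explicit rank-$4$ quadric.
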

\begin{proof}
Since $\mathrm{codim}_{\PP^3}(\B)\geq2$,   $\B$ 
has the following decomposition into irreducible components:
$$
\B=\bigcup_{j}C_j\cup\bigcup_{i}p_i ,
$$ 
where $C_j$ and $p_i$ are respectively curves and points, with $\deg(C_j)\leq3$
by  B\'ezout's Theorem.
If one of $C_j$ is nondegenerate, 
then $\deg(C_j)=3$ and
 $C_j$ is the twisted cubic curve, by \cite[Proposition~18.9]{harris-firstcourse}. 
This would produce the absurd result that 
$5=h^0(\PP^3,\I_{\B,\PP^3}(2))\leq h^0(\PP^3,\I_{C_j,\PP^3}(2))=3$ and hence 
we have that every $C_j$ is degenerate.
By Lemma \ref{prop: degenerate component}, 
using the fact that a line
imposes $3$ conditions to the quadrics, 
it follows that 
 $\B$ is a (irreducible or not) conic
 or as asserted in  (\ref{item: 2, P3}).   
In the latter case, 
modulo a change of coordinates,
we can suppose
 $r=V(x_2,x_3)$,   $p_1=[0,0,1,0]$,   $p_2=[0,0,0,1]$ and  we get 
\begin{equation}
 \B=V(x_0x_2,\ x_0x_3,\ x_1x_2,\ x_1x_3,\ x_2x_3),\quad
 \Q= V(y_0y_3-y_1y_2).
\end{equation}
\end{proof}

\begin{lemma}\label{prop: isolated point}
 If $n\geq3$ and $\B$ has an isolated point, then  $\rk(\Q)\leq4$.
\end{lemma}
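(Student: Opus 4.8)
The plan is to produce a linear subspace of codimension $2$ inside $\Q$ coming from the isolated point, and then to observe that a quadric hypersurface containing such a large linear space is automatically of small rank. First I would invoke Lemma \ref{prop: cohomology I2B}, which gives $h^0(\PP^n,\I_{\B}(2))=n+2$, so that $\varphi$ is defined by a basis $Q_0,\dots,Q_{n+1}$ of $H^0(\PP^n,\I_{\B}(2))$. Let $p$ be an isolated point of $\B$ and choose coordinates with $p=[1:0:\cdots:0]$. Since every $Q_i$ vanishes at $p$, we may write $Q_i=x_0\,\ell_i+q_i$, where $\ell_i$ is a linear form and $q_i$ a quadratic form in $x_1,\dots,x_n$. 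Because $\B$ is reduced by Notation \ref{notation: nonspecial case}, the isolated point $p$ is a reduced point of $\B$, so its Zariski tangent space is $\{0\}$; as this tangent space is cut out by the differentials $\ell_0,\dots,\ell_{n+1}$, the forms $\ell_i$ must span the whole $n$-dimensional space of linear forms in $x_1,\dots,x_n$, i.e. they have no common zero.

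Next I would exhibit the linear space. For $v\in\PP^{n-1}$ consider the line through $p$ in the direction $v$, parametrized by $[1:sv]$; for general $v$ this line is not contained in $\B$, since $p$ is isolated, so $\varphi([1:sv])=[\ell_i(v)+s\,q_i(v)]_i$ lies on $\Q$ for all but finitely many values of $s$. Letting $s\to 0$ and using that $\Q$ is closed, the point $[\ell_0(v):\cdots:\ell_{n+1}(v)]$ lies on $\Q$. Since the $\ell_i$ have no common zero, the assignment $v\mapsto[\ell_0(v):\cdots:\ell_{n+1}(v)]$ is a linear embedding $\PP^{n-1}\hookrightarrow\PP^{n+1}$, whose image $\Lambda$ is therefore a linear subspace of dimension $n-1$, and by the limit computation $\Lambda\subseteq\Q$. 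Conceptually, $\Lambda$ is the image of the exceptional divisor over $p$ after blowing up $p$, which resolves $\varphi$ near the isolated point.

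Finally, $\Q$ is a quadric hypersurface in $\PP^{n+1}$ containing the codimension-$2$ linear space $\Lambda$. Choosing coordinates so that $\Lambda=V(y_0,y_1)$, the defining form $F$ of $\Q$ vanishes identically on $\{y_0=y_1=0\}$ and hence has no monomials lying purely in $y_2,\dots,y_{n+1}$, so $F=y_0A+y_1B$ for suitable linear forms $A,B$. Every first partial derivative of $F$ then lies in $\langle A,B,y_0,y_1\rangle$, so the span of the partials---whose dimension equals $\rk(\Q)$ in characteristic $0$---has dimension at most $4$, giving $\rk(\Q)\le 4$. The point requiring the most care is the construction of $\Lambda$: one must verify that the limits of $\varphi$ along the lines through $p$ genuinely land in $\Q$ and sweep out a full $(n-1)$-dimensional linear space, which is exactly where the reducedness of $\B$ at $p$---forcing the $\ell_i$ to have maximal rank $n$---enters the argument.
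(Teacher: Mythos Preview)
Your proof is correct and follows essentially the same approach as the paper: both produce, from the isolated reduced point $p$, a linear $\PP^{n-1}$ inside $\Q$ (the image of the exceptional divisor of the blow-up at $p$) and conclude $\rk(\Q)\le 4$; you simply unwind the blow-up in explicit coordinates and spell out the rank bound, whereas the paper states these steps more tersely.
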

\begin{proof}
Let $p$ be an isolated (reduced) point of $\B$ and 
consider  the diagram
$$
\xymatrix{
\PP^{n-1}\simeq E \ar@{^{(}->}[r] \ar@{-->}@/^2.8pc/[rrd]^{\bar{\varphi}} &\Bl_{p}(\PP^{n}) \ar[d]^{\pi} \ar@{-->}[rd]\\
&\PP^{n} \ar@{-->}[r]^{\varphi} & \Q\subset\PP^{n+1}
}
$$
By the hypothesis on $p$, 
  $\bar{\varphi}$ is a linear morphism 
and so the quadric  $\Q=\Q^{n}$ contains a $\PP^{n-1}$.
This, since $n\geq3$,   implies $\rk(\Q)\leq 4$.
\end{proof}

\begin{proposition}\label{prop: P4}
If $n=4$ and
 $\rk(\Q)\geq 5$,   then either
\begin{enumerate}[(i)]
 \item\label{item: 1, P4} $\varphi$ is of type $(2,1)$,   or
 \item\label{item: 2, P4} $\varphi$ is of type $(2,2)$,   
$\Q$ is smooth and $\B$ 
is one of the following (Figure \ref{fig: fig_base_loci_n=4}):
\begin{enumerate} 
 \item the rational normal quartic curve, 
 \item the union of the twisted cubic curve  in a hyperplane  $H\subset\PP^4$ 
       with  a line not contained  in $H$ and intersecting the twisted curve,
 \item the union of an irreducible conic with two skew lines that intersect it,
 \item the union of three skew lines with another line that intersects them.
\end{enumerate}
\end{enumerate}
\end{proposition}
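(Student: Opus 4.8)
The plan is to follow the strategy of Proposition \ref{prop: P3}, first cutting down the dimension and the components of $\B$ and then classifying the surviving configurations. Throughout I use $h^0(\PP^4,\I_\B(2))=6$ from Lemma \ref{prop: cohomology I2B}. Since $\rk(\Q)\ge 5>4$, Lemma \ref{prop: isolated point} shows that $\B$ has no isolated points, so every irreducible component of $\B$ is a curve or a surface. If $h^0(\PP^4,\I_\B(1))\neq 0$, then by Proposition \ref{prop: type 2-1} the map is of type $(2,1)$ and we are in case (\ref{item: 1, P4}); hence I may assume from now on that $\varphi$ is of type $(2,d)$ with $d\ge 2$ and, again by Proposition \ref{prop: type 2-1}, that $\B$ is nondegenerate.

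Next I would rule out two-dimensional components. An irreducible nondegenerate surface $X\subseteq\B$ would satisfy $h^0(\PP^4,\I_X(2))\ge 6$; cutting with a general hyperplane gives an irreducible nondegenerate curve $C\subset\PP^3$ with $h^0(\PP^3,\I_C(2))\ge 6$, which is impossible, since a nondegenerate irreducible curve in $\PP^3$ lies on at most three quadrics. A degenerate surface component has degree $\le 2$ by Lemma \ref{prop: degenerate component}, and degree $2$ forces $\B$ to be a quadric, i.e.\ the already-treated type $(2,1)$. The remaining possibility is a plane $P\subset\PP^4$: writing $P=V(x_3,x_4)$, all six quadrics have the shape $x_3L+x_4M$, so the restriction of $\varphi$ to any hyperplane $H$ of the pencil through $P$ is, after removing the common linear factor cutting $P$ in $H$, given by linear forms and thus maps $H\cong\PP^3$ onto a linear subspace $\Lambda_H\subseteq\Q$. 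Since a quadric of rank $\ge 5$ in $\PP^5$ contains no $\PP^3$, each $\Lambda_H$ has dimension $\le 2$, and as $H$ runs over the pencil we obtain $\dim\Q=\dim\bigcup_H\Lambda_H\le 2+1<4$, a contradiction. Hence $\B$ is a reduced nondegenerate curve.

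I would then pin down the degree. A general hyperplane section $\Gamma=\B\cap H$ is a reduced set of $\lambda=\deg(\B)$ points spanning $\PP^3$ and, by the sequence $0\to\I_\B(1)\to\I_\B(2)\to\I_{\Gamma,\PP^3}(2)\to 0$ together with nondegeneracy, lies on at least six quadrics of $\PP^3$. Choosing four of its points spanning $\PP^3$ (so in linear general position), the six-dimensional system of quadrics through them has base locus exactly those four points (Lemma \ref{prop: castelnuovo argument}), and therefore $\lambda=4$. Thus $\B$ is a reduced, nondegenerate, degree-$4$ curve on exactly six quadrics. If one irreducible component is nondegenerate it has degree $\ge 4$, hence is the rational normal quartic, and since the latter already lies on ${4 \choose 2}=6$ quadrics it must coincide with $\B$: this is case (\ref{item: 2, P4}), configuration (a). Otherwise every component is a line, an irreducible conic, or a twisted cubic spanning a $\PP^3$, and the requirement that $\B$ impose exactly $9=15-6$ conditions on quadrics forces the degree-$4$ configurations with the incidences recorded in (b)--(d): a twisted cubic met by a line; a conic met by two mutually skew lines; and four lines, three of them skew and all met by the fourth.

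The main obstacle is this final step. The bare numerical count admits a few further reduced degree-$4$ curves of arithmetic genus $0$ with the same quadric count (for instance two conics meeting at a point, whose planes span $\PP^4$), and one must show that these do \emph{not} arise, while the surviving configurations do produce a birational map onto a \emph{smooth} quadric. I expect to settle this exactly as the examples in \S\ref{sec: examples} are handled: for each configuration one writes the six quadrics in suitable coordinates, checks that $\varphi$ is birational, and computes the equation of $\Q$ to read off its rank. The hypothesis $\rk(\Q)\ge 5$ then eliminates the spurious configurations and simultaneously forces $\Q$ to be smooth in (b)--(d). This is the place where the birationality of $\varphi$ and the rank hypothesis, rather than mere numerics, carry the argument.
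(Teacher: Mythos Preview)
Your plan is close to the paper's, but there is one genuine gap. In your degree argument you assert that a general hyperplane section $\Gamma=\B\cap H$ spans $\PP^3$, and then pick four of its points in general position to force $\lambda=4$. For a \emph{reducible} nondegenerate curve this inference can fail: three mutually skew lines in $\PP^4$ span $\PP^4$, yet a general hyperplane meets them in only three points. What actually rescues the situation is that $\B$ is the base locus of the \emph{complete} system $|\I_\B(2)|$ (Lemma~\ref{prop: cohomology I2B}): the six quadrics through three skew lines automatically vanish on a common transversal line, so the true base locus already has degree $4$. You need either to invoke this property to justify $\lambda\ge4$, or to dispose of the handful of nondegenerate degree~$\le 3$ configurations directly (each is degenerate, has $h^0(\I(2))\neq6$, or has strictly larger quadric base locus). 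The paper sidesteps the whole issue by never computing $\lambda$ globally; it argues component by component, bounding the degree of each surface or curve piece via B\'ezout, Lemma~\ref{prop: degenerate component}, and explicit counts of $h^0(\I_C(2))$.

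Your elimination of a plane component---restricting $\varphi$ to the pencil of hyperplanes through the plane and bounding $\dim\Q$ by the dimensions of the resulting linear images---is different from the paper's argument and works cleanly; the paper instead lists the possible residual pieces (a conic meeting the plane twice, or a disjoint line) and kills each. For the final classification you have correctly identified what remains, and the paper carries out precisely what you outline: for each candidate degree-$4$ configuration it writes the six quadrics in coordinates and computes the equation of $\Q$. This is how, for instance, two conics meeting at a single point is excluded (one finds $\rk(\Q)=4$) and how $\rk(\Q)=6$ is confirmed in cases (a)--(d).
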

\begin{proof}
By Lemma \ref{prop: isolated point} and since $\mathrm{codim}_{\PP^4}(\B)\geq2$,   $\B$
has the following decomposition into irreducible components:
$$
\B=\bigcup_{i}S_i\cup\bigcup_{j}C_j ,
$$ 
where $S_i$ and $C_j$ are respectively surfaces and curves. 
Let $S$ be one of $S_i$ and let $C$ be one of $C_j$.
We discuss all possible cases.
\begin{case}[$\deg(S)\geq 4$] This case is impossible 
by    B\'ezout's Theorem.
\end{case}
 \begin{case}[$\deg(S)=3$,   $S$ nondegenerate] 
Cutting $S$ with a general hyperplane $\PP^3\subset\PP^4$, we obtain 
the twisted cubic curve $\Gamma\subset\PP^3$.
Hence, from the exact sequence 
$0\rightarrow \I_{S,\PP^{4}}(-1)\rightarrow
 \I_{S,\PP^{4}}\rightarrow \I_{\Gamma, \PP^3} \rightarrow 0$, 
we get the contradiction
$
6=h^0(\PP^{4},\I_{\B,\PP^{4}}(2))\leq h^0(\PP^{4},\I_{S,\PP^{4}}(2))\leq h^0(\PP^3,\I_{\Gamma, \PP^3}(2) )=3
$.
\end{case}
 \begin{case}[$\deg(S)\geq 2$,   $S$ degenerate] By 
Lemma \ref{prop: degenerate component} we have $\deg(S)=2$ and $S=\B$ is a quadric.
\end{case}
\begin{case}[$\deg(C)\geq 5$,   $C$ nondegenerate] Take a general 
hyperplane
 $\PP^3\subset\PP^4$
and put $\Lambda=\PP^3\cap C$. $\Lambda$ is a set of $\lambda\geq5$ points of $\PP^3$ 
in general position and therefore, by Lemma \ref{prop: castelnuovo argument}, 
we get the contradiction
$$
6=h^0(\PP^{4},\I_{\B}(2))\leq h^0(\PP^4,\I_{C}(2))\leq
h^0(\PP^3,\I_{\Lambda}(2))\leq
\left\{\begin{array}{ll}  10-\lambda\leq5, 
& \mbox{ if } \lambda\leq7 \\  10-7=3, & \mbox{ if } \lambda\geq7 . \end{array}   \right.
$$
\end{case}
 \begin{case}[$\deg(C)\geq5$,   $C$ degenerate]  
This is impossible by  Lemma \ref{prop: degenerate component}.
\end{case}
 \begin{case}[$\deg(C)=4$,   $C$ nondegenerate] By 
\cite[Proposition~18.9]{harris-firstcourse}, $C$ is the rational normal quartic curve  
and one of its parameterizations 
is $[s,t]\in\PP^1\mapsto [s^4,s^3t,s^2t^2,st^3,t^4]\in\PP^{4}$.
We have 
$h^0(\PP^4,\I_{C,\PP^4}(2))=6$ and hence $C=\B$ and    
\begin{eqnarray}
\B&=& V(x_2^2-x_1x_3,\ x_2x_3-x_1x_4,\ x_0x_4-x_1x_3,\ x_3^2-x_2x_4,\ x_0x_2-x_1^2,\ x_0x_3-x_1x_2), \\
\Q&=& V(y_0y_2-y_1y_5+y_3y_4).
\end{eqnarray}
\end{case}
\begin{case}[$\deg(C)=4$,   $C$ degenerate] This case is impossible 
by 
 Lemma \ref{prop: degenerate component} and Proposition \ref{prop: type 2-1}.
\end{case}
\begin{case}[$\deg(C)=3$,   $\langle C \rangle=\PP^3$] 
Modulo a change of coordinates, 
$C$ is the twisted cubic curve parameterized by
$[s,t]\in\PP^1\mapsto [s^3,s^2t,st^2,t^3,0]\in V(x_4)$  
and, 
by the reduction obtained, since
$ h^0(\PP^4,\I_{C}(2))=h^0(\PP^4,\I_{\B}(2))+2$,
it follows
$\B=C\cup r$, 
where $r$ is a line that intersects $C$ in a single point transversely.
We can choose the intersection point to be $p=[1,0,0,0,0]$
and then we have
$r=\{ [s+q_0t,q_1t,q_2t,q_3t,q_4t]: [s,t]\in\PP^1\}$,
for a some  $q=[q_0,q_1,q_2,q_3,q_4]\in\PP^4$.     
By Proposition \ref{prop: type 2-1} we have $q_4\neq0$ and 
only for simplicity of notation we take
$q=[0,0,0,0,1]$,   hence $r=V(x_1,x_2,x_3)$.  So we obtain 
\begin{eqnarray}
\B&=&V( x_1^2-x_0x_2,\  x_3x_4,\  x_0x_3-x_1x_2,\  x_2x_4,\   x_1x_3-x_2^2,\  x_1x_4),\\
\Q&=& V(-y_4y_5+y_2y_3+y_0y_1).
\end{eqnarray}
\end{case}
\begin{case}[$\deg(C)=3$,   $\langle C \rangle=\PP^2$] It is impossible 
because otherwise 
$\B$ would contain
the entire plane spanned by $C$.
\end{case}
\begin{case}[$\deg(S)=1$] Since 
planes, conics and lines impose to the quadrics 
 respectively  $6$, $5$ and $3$ conditions, 
we have two subcases:
\begin{subcase}[$\B=S\cup\Gamma$,   $\Gamma$ conic, $\#(\Gamma\cap S)=2$] This
 is impossible
because otherwise there exists a line cutting $\B$ in exactly $3$ points.  
\end{subcase}
\begin{subcase}[$\B=S\sqcup l$,   $l$ line] This contradicts 
the birationality of $\varphi$. 
\end{subcase}
\end{case}
\begin{case}[$\deg(C)=2$,   $C$ irreducible] Let $C', l, l'$ be 
respectively 
eventual conic and eventual lines contained in $\B$.
We discuss the subcases:  
 \begin{subcase}[$\B=C\cup C'$,    $\#(C\cap C')=1$] In this case, 
we would have that  $\rk(\Q)=4$, against the  hypothesis.
 \end{subcase}
\begin{subcase}[$\B=C\cup C'\cup l$,    $\#(C\cap C')=2$,   $\#(C\cap l)=\#(C'\cap l)=1$] 
 $\B \supseteq C\cup C'\cup l$   implies that $\B$ is a quadric of codimension $2$.
\end{subcase}
\begin{subcase}[$\B=C\cup l\cup l'$,   $\#(l\cap l')=0$,   $\#(C\cap l)=\#(C\cap l')=1$] 
This case is really possible and, 
modulo a change of coordinates, 
we have
\begin{eqnarray}
 \B&=&V(x_2^2+x_0x_1,\ x_3x_4,\ x_1x_3,\ x_2x_4,\ x_2x_3,\ x_0x_4),\\
 \Q&=&V(-y_2y_5-y_3y_4+y_0y_1).
\end{eqnarray}
\end{subcase}
\end{case}
\begin{case}[$\deg(C)=2$,   $C$ reducible] By
  the reduction obtained,
$\B$ contains  two other skew lines, 
each of which intersects $C$ at a single point. 
This case is really possible and, 
modulo a change of coordinates, 
we have
\begin{eqnarray}
 \B&=&V(x_1x_2,\ x_3x_4,\ x_0x_3,\ x_2x_4,\ x_2x_3,\ x_0x_4-x_1x_4),\\
\Q&=&V(-y_4y_5+y_2y_3-y_0y_1).
\end{eqnarray}
\end{case}
\end{proof}

\begin{figure}
\begin{center}
\includegraphics[width=0.75\textwidth]{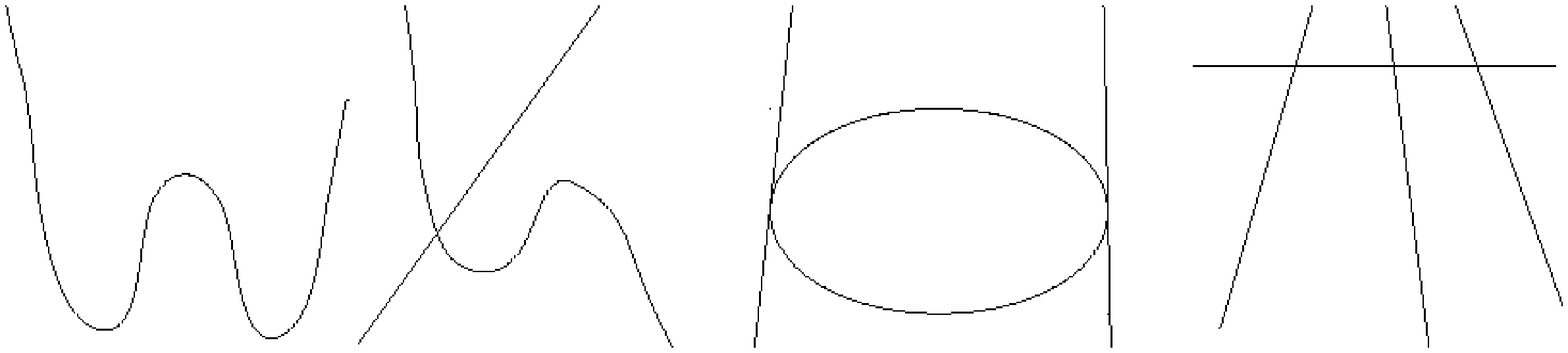}
\end{center}
\caption{Base loci when $n=4$.}
\label{fig: fig_base_loci_n=4}   
\end{figure}

Example \ref{example: extra} suggests that a possible generalization
of  Proposition \ref{prop: P4}
 to the case where $\B$ in nonreduced and $rk(\Q)< 5$ 
may not be trivial.
\begin{example}\label{example: extra}
The rational map $\varphi:\PP^4\dashrightarrow\PP^5$ defined by 
$$\varphi([x_0,x_1,x_2,x_3,x_4])=
[x_0^2, -x_0x_1, -x_0x_2, x_1^2-x_0x_3, 2x_1x_2-x_0x_4, x_2^2] , $$
is birational 
into  its image, which is the quadric of rank $3$,
$\Q=V(y_0y_5-y_2^2)$.
If  $\pi:\PP^5\dashrightarrow \PP^4$ is the projection 
from the point  $[0,0,0,0,0,1]$, then the composition
 $\pi\circ\varphi:\PP^4\dashrightarrow \PP^4$ is an involution.
The base locus $\B$ of $\varphi$ is everywhere nonreduced, 
$(\B)_{\mathrm{red}}=V(x_0,x_1,x_2)$ and 
$P_{\B}(t)=4t+1$.
\end{example}

\begin{acknowledgements}
The author wishes to thank 
Prof. Francesco Russo for his 
indispensable suggestions about the topics in this paper.
\end{acknowledgements}

\bibliographystyle{spmpsci} 
\bibliography{bibliography.bib}

\begin{thebibliography}{10}
\providecommand{\url}[1]{{#1}}
\providecommand{\urlprefix}{URL }
\expandafter\ifx\csname urlstyle\endcsname\relax
  \providecommand{\doi}[1]{DOI~\discretionary{}{}{}#1}\else
  \providecommand{\doi}{DOI~\discretionary{}{}{}\begingroup
  \urlstyle{rm}\Url}\fi

\bibitem{alzati-fania-ruled}
Alzati, A., Besana, G.M.: Criteria for very ampleness of rank two vector
  bundles over ruled surfaces (2009).
\newblock Available at \url{http://arxiv.org/abs/0902.3639}

\bibitem{alzati-russo-subhomaloidal}
Alzati, A., Russo, F.: Special subhomaloidal systems of quadrics and varieties
  with one apparent double point.
\newblock Math. Proc. Camb. Philos. Soc. \textbf{134}(1), 65--82 (2003)

\bibitem{bertram-ein-lazarsfeld}
Bertram, A., Ein, L., Lazarsfeld, R.: Vanishing theorems, a theorem of
  {S}everi, and the equations defining projective varieties.
\newblock J. Amer. Math. Soc. \textbf{4}(3), 587--602 (1991)

\bibitem{besana-biancofiore-deg11}
Besana, G.M., Biancofiore, A.: Degree eleven manifolds of dimension greater or
  equal to three.
\newblock Forum Math. \textbf{17}(5), 711--733 (2005)

\bibitem{besana-biancofiore-numerical}
Besana, G.M., Biancofiore, A.: Numerical constraints for embedded projective
  manifolds.
\newblock Forum Math. \textbf{17}(4), 613--636 (2005)

\bibitem{besana-fania-flamini-f1}
Besana, G.M., Fania, M.L., Flamini, F.: Hilbert scheme of some threefold
  scrolls over $\mathbb{F}_1$ (2011).
\newblock Available at \url{http://arxiv.org/abs/1110.5464}

\bibitem{chiantini-ciliberto}
Chiantini, L., Ciliberto, C.: On the dimension of secant varieties.
\newblock J. European Math. Soc. \textbf{12}, 1267--1291 (2010)

\bibitem{ciliberto-hilbertfunctions}
Ciliberto, C.: Hilbert functions of finite sets of points and the genus of a
  curve in a projective space.
\newblock In: Space Curves, \emph{Lecture Notes in Mathematics}, vol. 1266, pp.
  24--73. Springer Berlin / Heidelberg (1987)

\bibitem{ciliberto-mella-russo}
Ciliberto, C., Mella, M., Russo, F.: Varieties with one apparent double point.
\newblock J. Algebraic Geom. \textbf{13}(3), 475--512 (2004)

\bibitem{crauder-katz-1989}
Crauder, B., Katz, S.: {C}remona transformations with smooth irreducible
  fundamental locus.
\newblock Amer. J. Math. \textbf{111}(2), 289--307 (1989)

\bibitem{crauder-katz-1991}
Crauder, B., Katz, S.: {C}remona transformations and {H}artshorne's conjecture.
\newblock Amer. J. Math. \textbf{113}(2), 269--285 (1991)

\bibitem{debarre}
Debarre, O.: Higher-dimensional algebraic geometry.
\newblock Universitext. Springer-Verlag (2001)

\bibitem{edge}
Edge, W.L.: The number of apparent double points of certain loci.
\newblock Math. Proc. Cambridge Philos. Soc. \textbf{28}, 285--299 (1932)

\bibitem{ein-shepherdbarron}
Ein, L., Shepherd-Barron, N.: Some special {C}remona transformations.
\newblock Amer. J. Math. \textbf{111}(5), 783--800 (1989)

\bibitem{fujita-polarizedvarieties}
Fujita, T.: Classification Theories of Polarized Varieties, \emph{London Math.
  Soc. Lecture Notes Ser.}, vol. 155.
\newblock Cambridge Univ. Press (1990)

\bibitem{macaulay2}
Grayson, D.R., Stillman, M.E.: Macaulay2, a software system for research in
  algebraic geometry.
\newblock Available at \url{http://www.math.uiuc.edu/Macaulay2/}

\bibitem{griffiths-harris}
Griffiths, P.A., Harris, J.: Principles of Algebraic Geometry.
\newblock Pure and Applied Mathematics. John Wiley and Sons (1978)

\bibitem{sga2}
Grothendieck, A.: Cohomologie locale des faisceaux coh{\'e}rents et
  th{\'e}or{\`e}mes de {L}efschetz locaux et globaux, vol.~2.
\newblock North-Holland Publishing Co. (1968).
\newblock S{\'e}minaire de G{\'e}om{\'e}trie Alg{\'e}brique du Bois-Marie, 1962
  (SGA 2). Advanced Studies in Pure Mathematics

\bibitem{harris-firstcourse}
Harris, J.: Algebraic geometry: A first course, \emph{Graduate Texts in
  Mathematics}, vol. 133.
\newblock Springer-Verlag, New York (1992)

\bibitem{hartshorne-ample}
Hartshorne, R.: Ample subvarieties of algebraic varieties, \emph{Lecture Notes
  in Mathematics}, vol. 156.
\newblock Springer-Verlag (1970)

\bibitem{hartshorne-ag}
Hartshorne, R.: Algebraic Geometry, \emph{Graduate Texts in Mathematics},
  vol.~52.
\newblock Springer-Verlag (1977)

\bibitem{ionescu-smallinvariants}
Ionescu, P.: Embedded projective varieties of small invariants.
\newblock In: Algebraic Geometry Bucharest 1982, \emph{Lecture Notes in
  Mathematics}, vol. 1056, pp. 142--186. Springer Berlin / Heidelberg (1984)

\bibitem{ionescu-degsmallrespectcodim}
Ionescu, P.: On varieties whose degree is small with respect to codimension.
\newblock Math. Ann. \textbf{271}, 339--348 (1985)

\bibitem{ionescu-russo-qel2}
Ionescu, P., Russo, F.: Varieties with quadratic entry locus, {II}.
\newblock Compositio Math. \textbf{144}(4), 949--962 (2008)

\bibitem{ionescu-russo-conicconnected}
Ionescu, P., Russo, F.: Conic-connected manifolds.
\newblock J. reine angew. Math. \textbf{2010}, 145--157 (2010)

\bibitem{laksov}
Laksov, D.: Some enumerative properties of secants to non-singular projective
  schemes.
\newblock Math. Scand. \textbf{39}, 171--190 (1976)

\bibitem{lazarsfeld-vandeven}
Lazarsfeld, R., {Van De Ven}, A.: Topics in the Geometry of Projective Space:
  Recent Work of {F}. {L}. {Z}ak, \emph{DMV Seminar}, vol.~4.
\newblock Birkh{\"a}user Verlag (1984).
\newblock With an addendum by {Z}ak

\bibitem{mella-russo-baselocusleq3}
Mella, M., Russo, F.: Special {C}remona transformations whose base locus has
  dimension at most three (2005).
\newblock Preprint

\bibitem{mori-mukai}
Mori, S., Mukai, S.: Classification of {F}ano $3$-folds with ${B}_2\geq 2$.
\newblock Manuscripta Math. \textbf{36}, 147--162 (1981)

\bibitem{mukai-biregularclassification}
Mukai, S.: Biregular classification of {F}ano 3-folds and {F}ano manifolds of
  coindex 3.
\newblock Proc. Natl. Acad. Sci. USA \textbf{86}(9), 3000--3002 (1989)

\bibitem{mumford}
Mumford, D.: The Red Book of Varieties and Schemes, \emph{Lecture Notes in
  Mathematics}, vol. 1358, expanded edn.
\newblock Springer-Verlag (1988)

\bibitem{peters-simonis}
Peters, C.A.M., Simonis, J.: A secant formula.
\newblock Quart. J. Math. \textbf{27}(2), 181--189 (1976)

\bibitem{russo-qel1}
Russo, F.: Varieties with quadratic entry locus, {I}.
\newblock Math. Ann. \textbf{344}, 597--617 (2009)

\bibitem{russo-specialvarieties}
Russo, F.: Geometry of special varieties (2010).
\newblock Available at \url{http://www.dmi.unict.it/~frusso/}

\bibitem{russo-linesonvarieties}
Russo, F.: Lines on projective varieties and applications.
\newblock Rend. Circ. Mat. Palermo \textbf{61}, 47--64 (2012)

\bibitem{russo-simis}
Russo, F., Simis, A.: On birational maps and {J}acobian matrices.
\newblock Compositio Math. \textbf{126}, 335--358 (2001)

\bibitem{semple}
Semple, J.G.: On representations of the ${S}_k$'s of ${S}_n$ and of the
  {G}rassmann manifolds ${G}(k,n)$.
\newblock Proc. London Math. Soc. \textbf{s2-32}(1), 200--221 (1931)

\bibitem{note2}
Staglian{\`o}, G.: On special quadratic birational transformations whose base
  locus has dimension at most three (2012).
\newblock Available at \url{http://arxiv.org/abs/1205.5523}

\bibitem{sagemath}
Stein, W.A., {others}: {S}age {M}athematics {S}oftware ({V}ersion 4.6.1).
\newblock Available at \url{http://www.sagemath.org}

\bibitem{zak-tangent}
Zak, F.L.: Tangents and secants of algebraic varieties, \emph{Translations of
  Mathematical Monographs}, vol. 127.
\newblock AMS, Providence, RI (1993)

\end{thebibliography}
\addcontentsline{toc}{section}{References}

\end{document}